\title{Computing fusion products of MV cycles using the Mirkovi\'c--Vybornov isomorphism}
\author{Roger Bai, Anne Dranowski, Joel Kamnitzer} 
\date{\today}
\begin{document}

\maketitle
\begin{abstract}
    The fusion of two Mirkovi\'c--Vilonen cycles is a degeneration of their product, defined using the Beilinson--Drinfeld Grassmannian.  In this paper, we put in place a conceptually elementary approach to computing this product in type \(A\).  We do so by transferring the problem to a fusion of generalized orbital varieties using the Mirkovi\'c--Vybornov isomorphism.  As an application, we explicitly compute all cluster exchange relations in the coordinate ring of the upper-triangular subgroup of \( \GL_4\), confirming that all the cluster variables are contained in the Mirkovi\'c--Vilonen basis.
\end{abstract}

\section{Introduction}\label{s:intro}
\subsection{Geometric Satake and Mirkovi\'c--Vilonen cycles}\label{ss:gsmv}
The geometric Satake equivalence of Mirkovi\'c and Vilonen~\cite{mirkovic2007geometric} is an equivalence of categories between the category of spherical perverse sheaves on the affine Grassmannian of a reductive group \( G \) and the category of representations of its Langlands dual group \(G^\vee \).  This foundational result provides a powerful tool to study representation theory using geometry.  In particular, under this equivalence, the MV cycles (short for Mirkovi\'c--Vilonen) in the affine Grassmannian of \( G \) index bases for irreducible representations of \( G^\vee \). In this paper we work with \( G = \GL_m\) and identify \( G^\vee = \GL_m\) as well.

In~\cite{mirkovic2007quiver} (see also the recent sequel~\cite{mirkovic2019comparison}) Mirkovi\'c and Vybornov supply a geometric version of symmetric and skew Howe \((\GL_m,\GL_N)\) dualities.  They relate Kazhdan--Lusztig slices in the affine Grassmannian of \(\GL_m\) to slices in \(N\times N\) nilpotent orbits on the one hand, and to Nakajima quiver varieties on the other hand.    The second author~\cite{dthesis} showed  that under the Mirkovi\'c--Vybornov isomorphism, the MV cycles are identified with certain varieties of matrices, called generalized orbital varieties.

Essential to the geometric Satake equivalence, the BD Grassmannian (short for Beilinson--Drinfeld) is used to define a fusion product on the category of spherical perverse sheaves on the affine Grassmannian.  In~\cite{anderson2003polytope}, Anderson used the BD Grassmannian to define a fusion of MV cycles.  He constructed a family where the general fibre is a Cartesian product of two MV cycles, and the special fibre, called the fusion, is a (non-reduced) union of MV cycles.  Anderson conjectured that this fusion product matches the multiplication in the coordinate ring of the unipotent subgroup \( N \subset\GL_m\).  This conjecture was proven in~\cite{baumann2019mirkovic}, where it was also established that the MV cycles give a basis for \( \CC[N]\).

\subsection{Fusion using generalized orbital varieties}\label{ss:fugov}
Computing the fusion product of MV cycles using the BD Grassmannian is quite difficult, both computationally and conceptually.  A few such computations were done by Anderson and Kogan in~\cite{anderson2006algebra} in a somewhat ad-hoc fashion.  Another perspective on these computations (using convolution Grassmannians) was recently studied by Baumann, Gaussent and Littelmann~\cite{baumann2020bases}.

The purpose of this paper is to give a conceptually elementary way to compute this product by transferring it to a fusion product of generalized orbital varieties.  To do so, we use that the Mirkovi\'c--Vybornov isomorphism extends to an isomorphism between families of slices in the BD Grassmannian and families of slices of matrices with two eigenvalues. Our main result is the following (see \Cref{cor:goodreps} for a more precise statement).

\begin{theorem}\label{thm:main}
    Let \( Z', Z'' \) be MV cycles. The structure constants for the of the corresponding MV basis vectors in \(\CC[N]\) are equal to the intersection multiplicities of generalized orbital varieties in the fusion of the generalized orbital varieties corresponding to \( Z', Z''\).
\end{theorem}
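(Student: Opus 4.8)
The plan is to deduce \Cref{thm:main} from an isomorphism of \emph{families}: one on the affine Grassmannian side and one on the matrix side, whose special fibres carry the two fusion products. First I would recall the two degenerations in parallel. On the Grassmannian side, Anderson's construction produces a flat family over a configuration space of points on a curve whose generic fibre is \( Z' \times Z'' \) and whose special fibre --- the fusion --- is a union \( \bigcup_k Z_k \) of MV cycles appearing with multiplicities \( c_k \); by Anderson's conjecture, proven in~\cite{baumann2019mirkovic}, these \( c_k \) are precisely the structure constants of the MV basis of \( \CC[N] \). On the matrix side, one spreads two generalized orbital varieties \( Y', Y'' \) (which live inside nilpotent \( N \times N \) matrices, i.e.\ with the single eigenvalue \( 0 \)) to matrices with two moving eigenvalues \( t_1, t_2 \), takes the product, and lets \( t_1, t_2 \) collide at \( 0 \); the resulting flat limit is by definition the fusion of \( Y' \) and \( Y'' \), a union of generalized orbital varieties \( Y_k \) with certain intersection multiplicities.

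The heart of the matter is to upgrade the Mirkovi\'c--Vybornov isomorphism to an isomorphism between these two families. Mirkovi\'c and Vybornov identify a Kazhdan--Lusztig slice in the affine Grassmannian of \( \GL_m \) with a slice transverse to a nilpotent orbit in \( N \times N \) matrices; using the comparison of~\cite{mirkovic2019comparison} I would check that this identification is already defined over the curve and extends to an isomorphism between the family of slices in the BD Grassmannian and the family of slices of matrices with two eigenvalues, fitting into a commuting diagram over the configuration base. Over the generic fibre this restricts to the fibrewise Mirkovi\'c--Vybornov isomorphism, which by~\cite{dthesis} sends \( Z' \times Z'' \) to \( Y' \times Y'' \). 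Since the two families are isomorphic as schemes over the common base and both are flat, their special fibres are isomorphic \emph{as schemes}, with their non-reduced structure; hence the cycle \( \sum_k c_k [Z_k] \) of the Grassmannian-side special fibre is carried to the cycle \( \sum_k c_k [Y_k] \) of the matrix-side special fibre, matching each component with its multiplicity. Comparing with the definition of the fusion of generalized orbital varieties gives that \( c_k \) is the intersection multiplicity of \( Y_k \) in that fusion, which is \Cref{thm:main}; keeping track of the identifications explicitly yields the sharper \Cref{cor:goodreps}.

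I expect the main obstacle to be the construction and compatibility check in the second step. Two points need care. First, the Mirkovi\'c--Vybornov maps of~\cite{mirkovic2007quiver, mirkovic2019comparison} are formulated fibrewise, so one must verify that they genuinely globalize over the configuration space --- in particular that the transverse-slice data can be written algebraically in the eigenvalues \( t_1, t_2 \) --- so that no spurious components and no loss of flatness are introduced along the wall \( t_1 = t_2 \). Second, one must be careful about scheme structures: Anderson's fusion is a priori only defined as a flat limit, and the identification of \( Z' \times Z'' \) with \( Y' \times Y'' \) in~\cite{dthesis} is an identification of reduced varieties, so I would argue that the flat limits on the two sides --- each pinned down by the same universal property relative to the isomorphic total spaces --- literally coincide as subschemes, which is exactly what makes the multiplicities transfer. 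Once these compatibilities are established, the equality of structure constants with intersection multiplicities is formal.
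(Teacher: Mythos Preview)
Your overall strategy matches the paper's: upgrade the Mirkovi\'c--Vybornov isomorphism to an isomorphism of families over $\AA$ (the paper does this in \Cref{th:OGrl} and \Cref{th:OTGrW}, proving it directly rather than extracting it from \cite{mirkovic2019comparison}), then compare special fibres and invoke \cite{baumann2019mirkovic} for the link to structure constants.

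There is one genuine inaccuracy in your outline that the paper has to work around. You describe the matrix-side family as spreading $Y',Y''$ and ``taking the product'', with generic fibre $Y'\times Y''$ matching $Z'\times Z''$ under the fibrewise Mirkovi\'c--Vybornov isomorphism. This is not what happens, and in general cannot: the individual generalized orbital varieties $X(\tau'),X(\tau'')$ need not even be defined, because $\mu',\mu''$ need not be dominant (only $\mu=\mu'+\mu''$ is assumed dominant). The paper flags this explicitly after the definition of $\mathring X(\tau',\tau'')_{0,s}$. Instead, the matrix-side family is defined \emph{directly} as a locus of single $N\times N$ matrices with two eigenvalues $0,s$ and prescribed Jordan types on each generalized eigenspace; the product structure lives only on the Grassmannian side via the factorization $\theta_s:\Gr_{0,s}\cong\Gr\times\Gr$, and the content of \Cref{pr:XttZtt} is that the eigenspace decomposition of such a matrix recovers the pair of lattices in $\mathring Z(\tau')\times\mathring Z(\tau'')$. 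The result from \cite{dthesis} you invoke concerns the $s=0$ fibre (\Cref{cor:mvy}), not the generic one.

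A second, smaller point: the paper does not argue via a global scheme isomorphism of the two closures. Rather, \Cref{cor:intmul} observes that the image of $X(\tau',\tau'')_{0,\AA}$ under the Mirkovi\'c--Vybornov map is only a dense constructible subset of $Z(\tau')\ast_\AA Z(\tau'')$ (the closure in $\Grbd$ may escape the slice $\cW_\mu$), and then uses that intersection multiplicity along a divisor is a local invariant. Your flatness concern on the matrix side is handled separately by the Eisenbud--Saltman result (\Cref{prop:adjoint}).
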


Since these generalized orbital varieties are subschemes of affine spaces of matrices defined by certain rank conditions, the fusion of generalized orbital varieties can be computed by elementary commutative algebra.

\subsection{Cluster algebras and MV cycles}\label{ss:clmv}
One motivation for computing products of MV basis elements is to relate the cluster algebra structure on \(\CC[N]\) to the MV basis. Cluster algebras were introduced by Fomin and Zelevinsky in~\cite{fomin2002cluster1} and it was shown in~\cite{geiss2007initial}, using a construction of~\cite{berenstein2005cluster3}, that \(\CC[N]\) has a natural cluster algebra structure. 

This structure starts with an initial seed \(\Sigma = (\{x_1,\dots,x_r\},B)\), where \(x_i \in \CC[N]\) are a cluster of variables, \(B\) is a certain matrix, and \( r = m(m-1)/2 \).
We obtain new variables through a process called mutation, where for a mutable variable \(x_i\), its mutation is the unique element \(x_i^* \in \CC[N]\) such that 
\[
    x_i x_i^* = x_+ + x_- \,.
\]
This equation is called an exchange relation, and the terms \(x_+\) and \(x_-\) are certain monomials in the \(x_j\), \(j\neq i\),  determined by \(B\). 
It gives us the new cluster \(\{x_1, \dots, x_i^*, \dots, x_r\}\). 

The initial cluster \(\{x_1,\dots,x_r\}\) as well as all possible variables that are obtained through successive mutations are known as cluster variables. 
The cluster monomials are products of cluster variables that are supported on a single cluster. 

There are two well-known bases of \(\CC[N]\) whose relationship with the cluster structure has been heavily studied: the dual canonical basis of~\cite{lusztig1990canonicalbases} and the dual semicanonical basis of~\cite{lusztig2000semicanonical}.
In~\cite{kang2018monoidal}, Kang, Kashiwara, Kim and Oh showed that the cluster monomials are contained in the dual canonical basis, while, in~\cite{geiss2006rigid}, Gei\ss, Leclerc and Schr\"oer proved that they are all contained in the dual semicanonical basis.
%

The MV basis, the dual canonical basis, and the dual semicanonical basis are all examples of biperfect bases of \(\CC[N]\)~\cite[Section 2]{baumann2019mirkovic}. In particular, they share a crystal structure.
In~\cite[Appendix]{baumann2019mirkovic}, we showed that the MV basis differs from the dual semicanonical basis at the same point in the underlying crystal as where the dual semicanonical basis differs from the dual canonical basis.  
The fact that this point at which each of these bases differ is not a cluster monomial in any of these bases, gives evidence for the following conjecture of Anderson and Kogan~\cite[Conjecture 5.1]{anderson2006algebra}.
\begin{conjecture}\label{conj:cluster in MV}
    The cluster monomials are contained in the MV basis. 
\end{conjecture}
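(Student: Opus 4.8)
The plan is to prove \Cref{conj:cluster in MV} by induction along the exchange graph of the cluster structure on \(\CC[N]\), with \Cref{thm:main} supplying the tool to verify products in the MV basis. For the base case one must show that the variables of the initial seed \(\Sigma\)---which are unipotent minors of \(N\)---are MV basis elements, and likewise that the cluster monomials supported on the initial cluster (products of compatible minors) are MV basis elements. The first point is essentially classical: the minor attached to a chamber weight is the MV basis element indexed by the MV cycle whose moment polytope is the corresponding edge. The second should follow from \Cref{thm:main} together with a direct check that the fusion of the generalized orbital varieties of compatible minors is irreducible and generically reduced, so that the product of the corresponding MV basis vectors is again a single MV basis vector.

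For the inductive step, assume every cluster monomial supported on a cluster at distance at most \(n\) from \(\Sigma\) lies in the MV basis, and consider a mutation \(x_i \mapsto x_i^{\ast}\) out of such a cluster, with exchange relation \(x_i x_i^{\ast} = x_+ + x_-\). By hypothesis \(x_i\), \(x_+\) and \(x_-\) are MV basis elements, say \(b_{Z_i}\), \(b_{Z_+}\) and \(b_{Z_-}\). I would produce a candidate for \(x_i^{\ast}\) as the MV basis element \(b_{Z}\) where \(Z\) is the MV cycle whose polytope is the Harder--Narasimhan polytope of the rigid module over the preprojective algebra \(\Pi\) that categorifies \(x_i^{\ast}\) in the sense of Gei\ss--Leclerc--Schr\"oer; the passage from rigid \(\Pi\)-modules to MV cycles is furnished by the identification of MV polytopes with \(\Pi\)-modules due to Baumann--Kamnitzer, though checking that this singles out a well-defined MV cycle needs some care. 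Because \(\CC[N]\) is a domain, the equation \(b_{Z}\, b_{Z_i} = b_{Z_+} + b_{Z_-}\) has the unique solution \(b_{Z} = x_i^{\ast}\), so it suffices to establish that identity; and by \Cref{thm:main} it is equivalent to the purely geometric statement that the fusion of the generalized orbital varieties attached to \(Z\) and \(Z_i\) has exactly the two components \(Z_+\) and \(Z_-\), each with intersection multiplicity one. Repeating the base-case argument then promotes the new cluster monomials---products of \(x_i^{\ast}\) with the surviving cluster variables---into the MV basis, which again reduces to the multiplicativity statement that the fusion of the generalized orbital varieties of two compatible cluster variables is irreducible and generically reduced.

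This scheme thus reduces the conjecture to two families of assertions about fusions of generalized orbital varieties---an \emph{exchange} statement (exactly two reduced components, matching \(x_+\) and \(x_-\)) and a \emph{multiplicativity} statement (one reduced component)---together with the dictionary telling us which fusions to compute. The main obstacle is to establish these fusion statements uniformly in the rank \(m\): although, as noted after \Cref{thm:main}, each individual fusion is a finite computation with rank conditions, the present paper carries this out by hand only for \(\GL_4\), and handling all mutations in all ranks will require a structural understanding of how generalized orbital varieties and their fusions degenerate---presumably organized through the combinatorics of the polytopes and Harder--Narasimhan data, so that the exchange matrix \(B\) is matched with an explicit family of matrices. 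A less geometric alternative would be to compare the MV basis directly with the dual semicanonical basis, which already contains all cluster monomials by Gei\ss--Leclerc--Schr\"oer, and to argue that the crystal points at which the two bases differ are never indices of cluster monomials; but determining all such difference points appears to be at least as hard as the conjecture, so I expect the fusion-product route to be the more promising one.
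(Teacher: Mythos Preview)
The statement you are addressing is labeled a \emph{conjecture} in the paper, and the paper does not prove it in general. The paper's contribution toward it is twofold: it observes that for $A_n$ with $n\le 3$ the conjecture holds because $\CC[N]$ has a unique biperfect basis, and in \Cref{ss:GL4 examples} it reverifies the $\GL_4$ case directly by computing, via \Cref{cor:goodreps}, each fusion of generalized orbital varieties corresponding to an exchange relation and checking that the result has exactly the expected reduced components. There is no inductive or structural argument; the general statement remains open.

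Your proposal is therefore not being compared against a proof in the paper, because there is none. What you have written is a research program, and you correctly flag its main gaps. The inductive scheme you outline---propagate along the exchange graph, using \Cref{thm:main} to realize each exchange relation as a fusion with exactly two reduced components---is precisely the ``hope'' the paper articulates in the paragraph following the conjecture; the $\GL_4$ computations are its only evidence. Two remarks on the details. First, your base case is not as automatic as you suggest: that the initial flag minors lie in every biperfect basis is \cite[Remark~2.10]{baumann2019mirkovic}, and multiplicativity of compatible flag minors in the MV basis is established only under an extra combinatorial hypothesis in \cite[Proposition~7.2]{baumann2020bases}, not unconditionally. Second, your device of naming the candidate cycle for $x_i^\ast$ via the HN polytope of the associated rigid $\Pi$-module is a sensible way to specify which fusion to compute, but the verification step---that the fusion of the two generalized orbital varieties has exactly the components $Z_+$ and $Z_-$ with multiplicity one---is the entire content of the conjecture at that mutation, and no mechanism for proving it uniformly is known. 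So while nothing in your outline is wrong, it does not constitute a proof, and the paper does not claim one either.
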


The conjecture is true in type \(A_n\) for \(n\leq 3\) since in these cases \(\CC[N]\) has a unique biperfect basis \cite[Section 2.3]{baumann2019mirkovic}.
More generally, cluster variables called flag minors, coming from initial seeds constructed in \cite[Definitions~2.2--2.3]{berenstein2005cluster3}, belong to each biperfect basis of \(\CC[N]\) \cite[Remark~2.10]{baumann2019mirkovic}. 
And recently in \cite[Proposition 7.2]{baumann2020bases} cluster monomials supported on clusters of flag minors satisfying a certain combinatorial condition where shown to be in the MV basis. 

Our hope is that each exchange relation can be realized using the fusion of MV cycles. This would imply that all cluster variables are in the MV basis. In \Cref{ss:GL4 examples} we verify that this is indeed the case in type \(A_3\) by directly computing each fusion product corresponding to an exchange relation.

\section{Supporting Cast}\label{s:players}
\subsection{Rings and discs}\label{ss:rings}
Set \(\cO = \cO_0 = \CC\xt\) and \(\cK = \cK_0 = \CC\xT\).
We will also consider \(\cO_s = \CC\xt[t-s]\) and its fraction field \(\cK_s=\CC\xT[t-s]\), for any \( s \in \CC\setminus\{0\} \), as well as \( \Oinf = \CC\xt[t^{-1}] \) and \(\Kinf = \CC\xT[t^{-1}]\). For any point \( s \in \PP = \PP^1\), \( \cO_s\) is the completion of the local ring \( \mathcal O_{\PP, s} \) and thus the formal spectrum of \( \cO_s\) is the formal neighbourhood \( D_s\) of \( s \) (also called the formal disc centered at \( s\)).  Similarly, the formal spectrum of the field \(\cK_s\) is the deleted formal neighbourhood (or punctured disc), denoted \( D_s^\times\).

Note that for \( s \in \AA = \AA^1\), we have an obvious isomorphism  \(\cO_s \cong \cO\) and \(\cK_s\cong\cK\) taking \( t-s \) to \( t\). 

\subsection{Groups}\label{ss:groups}
Let \(H \) be an algebraic group over \( \CC \).  We will be interested in \( H(R)\) where \( R \) is a \(\CC\)-algebra, for example \(R = \CC[t], \cO_s\), etc.  
Note that evaluation at \( t = s\) provides a group homomorphism \( H(\cO_s) \rightarrow H\).  We denote the kernel of this map by \( H_1(\cO_s)\), often called the  first congruence subgroup.  We will be particularly interested in this construction in the case \( s = \infty\), which gives us the group \( H_1(\Oinf)\).

Throughout the paper, we fix \( m \in \NN\).  We let \(G = \GL_m\) and we let \(T\subset G\) be the maximal torus of diagonal matrices. Let \( N, N_- \subset G\) denote the subgroups of upper and lower triangular matrices with 1s on the diagonal. 
We identify \(\ZZ^m\) with the coweight lattice of \(G\) and say that a coweight \(\nu = (\nu_1,\dots,\nu_m)\) is \new{dominant} if \(\nu_1\ge\cdots\ge\nu_m\) and \new{effective} if \(\nu_j\ge 0\) for all \(j\).
If \(\nu\) is both effective and dominant, then it is a partition of size \(|\nu| = \nu_1 + \cdots + \nu_m\in\NN\).  Let \( Q_+ \subset \ZZ^m\) denote the positive root cone, explicitly we have
\[
    Q_+ = \{ (\nu_1, \dots, \nu_m) : \nu_1 + \cdots + \nu_j \ge 0 \text{ for }j = 1, \dots, m-1 \text{ and } \nu_1 + \dots + \nu_m = 0 \} \,. 
\]
We define a partial order on \( \ZZ^m\) by \( \lambda \ge \mu \) if and only if \( \lambda - \mu \in Q_+ \).

Given \(s\in\CC\) we define \((t-s)^\nu\) to be the diagonal matrix 
\[
\begin{bmatrix}
    (t-s)^{\nu_1} \\
    & (t-s)^{\nu_2} \\ 
    & & \ddots \\
    & & & (t-s)^{\nu_m}
\end{bmatrix} 
\]
which we can view in \(G(K)\) for any ring \( K \) containing \((t-s)^{-1}\) and \(\CC[t]\). For example, \((t-s)^\nu\in G(K)\) for \( K = \Ks\) or \(\CC(t)\).

We will also be interested in the affine space \( M_m\) of \(m\times m\) matrices.  Note that for any \( \CC\)-algebra \( R \), \(G(R)\) consists of those matrices \( g \in M_m(R) \) whose determinant is invertible in \( R\). Thus for example \( (t-s)^\mu\in M_m(\CC[t])\) for all effective \( \mu \in \ZZ^m\) but \((t-s)^\mu\in G(\CC[t])\) if and only if \( \mu = 0 \).

\subsection{Lattices}\label{ss:lat}
We will use the lattice model for the affine Grassmannian, so it is useful to recall the following definition. Let \( R \subset K\) be two \(\CC\)-algebras (usually, but not always, \(K\) will be a field). 
Consider \( K^m \) as a \(K\)-module. By restriction \( K^m\) can be viewed as an \(R\)-module.  An \new{\(R\)-lattice} in \(K^m\) is an \(R\)-submodule \( L \subset K^m\) which is a free \(R\)-module of rank \( m \) and satisfies \( L \otimes_R K = K^m \). Equivalently, \( L = \Sp_R(v_1, \dots, v_m)\) where \(v_1, \dots, v_m\) are free generators of \(K^m\). 

\( R^m \subset K^m \) is called the standard lattice. The group \(\GL_m(K) \) acts transitively on the set of \(R\)-lattices in \(K^m\), thus giving a bijection between this set and \(\GL_m(K)/\GL_m(R)\), since \( \GL_m(R) \) is the stabilizer of the standard lattice. 

We will be particularly interested in \(\CC[t]\)-lattices in \( \CC(t)^m\). Given such a lattice \( L \) and a point \( a \in \CC\), the \new{specialization} of \( L \) at \( a \) is the lattice in \(\cK_a^m\) defined as \( L(a) := L \otimes_{\CC[t]} \cO_a \). 
If \(L(a) = \cO_a^m\) then \(L\) is said to be \new{trivial at \(a\)}. 
For example, the lattice \((t-s)^{-1}\CC[t] \subset \CC(t)\) is trivial at any \(a\ne s\), since \( t-s \) is invertible in \( \cO_a\). 

\section{Affine Grassmannians}\label{s:affgrs}
We now define various versions of the affine Grassmannian which will play important roles in this paper. 
Each definition is made group-theoretically and then restated as a moduli space of vector bundles and as a moduli space of lattices.
We also sketch how to pass between descriptions.  

In these definitions, \( \Vtriv\) denotes the trivial rank \( m \) vector bundle. 

\begin{definition}\label{def:gr}
     The \new{ordinary affine Grassmannian} \(\Gr = G(\cK)/G(\cO)\).
\end{definition}    
It is the moduli space of vector bundles with trivializations,
\[
\Gr = 
    \left \{ 
        (V, \varphi) : \text{\(V\) is a rank \(m\) vector bundle on \( D_0 \), \(\varphi : V \xrightarrow{\sim} \Vtriv \) on \( D_0^\times \)} 
    \right \} \,. 
\]
It is also a moduli space of lattices, 
\[ 
\Gr = 
    \left\{ L \subset \cK^m : \text{ \(L\) is a \(\cO\)-lattice} \right\}\,.
\]
We obtain a lattice from a pair \( (V,\varphi) \) by setting \( L = \Gamma(D_0, V)\) which is embedded into \( \cK^m = \Gamma(D_0^\times, \Vtriv)\) using \( \varphi\).  
On the other hand, to get a lattice from the group-theoretic description \( G(\cK) / G(\cO) \) we set \( L = g \cO^m\) for \( g \in G(\cK)\).
\begin{definition}\label{def:grs}
    For any \( s \in \CC \), the \new{ordinary affine Grassmannian} \(\Gr_s = G(\cK_s)/G(\cO_s)\) \new{at} \( s \). 
\end{definition}  
As above, we have modular and lattice descriptions:
\begin{gather*}
\Gr_s = 
    \left\{ (
        V, \varphi) : \text{\(V\) is a rank \(m\) vector bundle on \( D_s \), \(\varphi : V \xrightarrow{\sim} \Vtriv \) on \( D^\times_s \)} 
    \right\}\,, \\
\Gr_s = 
    \left\{ 
        L \subset \cK_s^m : \text{ \(L\) is a \(\cO_s\)-lattice} 
    \right\} \,. 
\end{gather*}
\begin{definition}\label{def:grth}
    The \new{thick affine Grassmannian} \(\Grth = G(\Kinf)/G(\CC[t])\).
\end{definition}
Again, we have the following modular and lattice descriptions:
\begin{gather*}
\Grth = 
    \left\{ 
        (V, \varphi) : \text{\(V\) is a rank \(m\) vector bundle on \( \PP \), \(\varphi : V \xrightarrow{\sim} \Vtriv \) on \( D_\infty \) } 
    \right\}\,, \\
\Grth = 
    \left\{ 
        L : L \subset  \Kinf^m \text{ is a \(\CC[t]\)-lattice} 
    \right\}\,.
\end{gather*}
\begin{definition}\label{def:bdgr}
    The two point \new{Beilinson--Drinfeld Grassmannian} \[\pi : \Grbd\to \AA\] with one point fixed at 0, and the second point \(s\in\AA\) varying.
\end{definition}
It is described in modular terms by 
\[
\Grbd = 
    \left\{ 
        (V,\varphi,s) : V\text{ is a rank \(m\) vector bundle on }\PP, \varphi : V \xrightarrow{\sim} \Vtriv \text{ on } \PP \setminus \{0, s\}  
    \right\}\,. 
\]
The fibre of \(\Grbd \to \AA\) over \( s \in \AA \) will be denoted \( \Gr_{0,s} \) and is given by
\[  
    G(\CC[t, t^{-1}, {(t-s)}^{-1}]/G(\CC[t])\,. 
\]
We also have the lattice descriptions:
\begin{gather*}
\Grbd = 
    \left \{ 
        (L,s) : L \subset  \CC(t)^m\text{ is a \(\CC[t]\)-lattice trivial at any }a \ne 0, s 
    \right \} \,, \\
\Gr_{0,s} = 
    \left \{ 
        L : L \subset  {\CC[t,t^{-1},{(t-s)}^{-1}]}^m \text{ is a \(\CC[t]\)-lattice} 
    \right \} \,.
\end{gather*}
\begin{definition}\label{def:grplus}
The \new{positive part} of \(\Gr \), resp.\ \(\Grth\), is defined by 
\begin{gather*}
    \Gr^+ = \left(M_m(\cO) \cap G(\cK)\right) / G(\cO)\,,\,\, 
    \text{resp.\ } {\Grth}^+ = \left(M_m(\CC[t]) \cap G(\Kinf)\right) / G(\CC[t])\,. 
\end{gather*}
\end{definition}
In modular terms, \(\Gr^+\) (resp.\ \({\Grth}^+\)) is the set of those \( (V, \varphi)\) where \( \varphi : V \rightarrow \Vtriv \) extends to an inclusion of coherent sheaves over \( D_0 \) (resp.\ over \( \PP\)).

In lattice terms, \( \Gr^+\) (resp.\ \({\Grth}^+\)) contains those lattices \(L\) which are contained in \(\cO^m\) (resp.\ \(\CC[t]^m\)).

\subsection{Relations between different affine Grassmannians}\label{ss:relsbtwgrs}
These different versions of the affine Grassmannian are related as follows.  
\begin{proposition}\label{pr:bd-th}
    There is a map \(\Grbd \rightarrow \Grth \) defined in the following equivalent ways:
    \begin{enumerate}
        \item in modular terms as \[(V,\varphi,s)\mapsto (V, \varphi \big|_{D_\infty}) \, ;\]
        \item in the fibre over \( s \in \AA\) as the inclusion 
        \[
            G(\CC[t, t^{-1}, (t-s)^{-1}])/ G(\CC[t]) \rightarrow G(\Kinf)/G(\CC[t])\,;
        \]
        \item in terms of lattices as \[ (L,s)\mapsto L \,,\] using the inclusion \(\CC[t, t^{-1}, (t-s)^{-1}]^m \rightarrow \Kinf^m\) on ambient spaces. 
    \end{enumerate}
\end{proposition}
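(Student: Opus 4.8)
The plan is to check that the three recipes (1)--(3) describe the same map and that its target really is \(\Grth\); since each of \(\Grbd\) and \(\Grth\) carries the standard dictionary between its group-theoretic, modular and lattice descriptions recalled in Section~\ref{s:affgrs}, this amounts to unwinding definitions rather than proving anything new.

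First I would check that (1) is well-defined. If \((V,\varphi,s)\in\Grbd\) with \(s\in\AA\), then \(\infty\notin\{0,s\}\), so \(D_\infty\subseteq\PP\setminus\{0,s\}\) and \(\varphi\) restricts to a trivialization \(V|_{D_\infty}\xrightarrow{\sim}\Vtriv|_{D_\infty}\). Forgetting \(s\), the pair \((V,\varphi|_{D_\infty})\) is exactly a point of \(\Grth\) in its modular form, and the construction is algebraic in the moduli, so this defines a morphism \(\Grbd\to\Grth\).

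Next I would match (1) with (2) over a fibre. Fix \(s\in\AA\) and represent a point of \(\Gr_{0,s}\) by \(g\in G(\CC[t,t^{-1},(t-s)^{-1}])\). Under the dictionary for \(\Gr_{0,s}\), the corresponding \((V,\varphi)\) is obtained by gluing the trivial bundle on \(\AA\) (sections \(\CC[t]^m\)) to the trivial bundle on \(\PP\setminus\{0,s\}\) (sections \(\CC[t,t^{-1},(t-s)^{-1}]^m\)) along \(g\) over the overlap \(\PP\setminus\{0,s,\infty\}\), with \(\varphi\) the tautological trivialization of the second chart. Restricting to the formal neighbourhood of \(\infty\) replaces the overlap by \(D_\infty^\times\) and reads \(g\) inside \(G(\Kinf)\) via \(\CC[t,t^{-1},(t-s)^{-1}]\hookrightarrow\Kinf\); but gluing \(\CC[t]^m\) on \(\AA\) to \(\Oinf^m\) on \(D_\infty\) along \(g\in G(\Kinf)\) is precisely what the dictionary for \(\Grth=G(\Kinf)/G(\CC[t])\) assigns to the coset \(gG(\CC[t])\). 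Hence over the fibre at \(s\), the map (1) is the coset inclusion (2). Pushing this through the lattice dictionaries gives (3): \(g\) corresponds to \(g\CC[t]^m\subseteq\CC[t,t^{-1},(t-s)^{-1}]^m\) on the source and to the same submodule of \(\Kinf^m\) on the target, and letting \(s\) range over \(\AA\) yields \((L,s)\mapsto L\) along \(\CC[t,t^{-1},(t-s)^{-1}]^m\hookrightarrow\Kinf^m\).

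The one nonformal point — and where I expect to spend the most care — is matching the \emph{global} lattice descriptions: one must check that a pair \((L,s)\in\Grbd\), i.e.\ a \(\CC[t]\)-lattice \(L\subset\CC(t)^m\) trivial at every \(a\neq0,s\), actually satisfies \(L\subseteq\CC[t,t^{-1},(t-s)^{-1}]^m\), so that the inclusion into \(\Kinf^m\) is literally defined. Writing \(L=g\CC[t]^m\) with \(g\in\GL_m(\CC(t))\) (transitivity of \(\GL_m(\CC(t))\) on \(\CC[t]\)-lattices), triviality at \(a\) forces \(g,g^{-1}\in\GL_m(\cO_a)\), so the entries of \(g\) and \(g^{-1}\) are regular at every \(a\neq0,s\); hence \(g\in\GL_m(\CC[t,t^{-1},(t-s)^{-1}])\) and \(L\subseteq\CC[t,t^{-1},(t-s)^{-1}]^m\). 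Then \(L\otimes_{\CC[t]}\Kinf=\Kinf^m\) automatically (\(\Kinf\) is a field extension of \(\CC(t)\)), so the image is a \(\CC[t]\)-lattice in \(\Kinf^m\) and the map does land in \(\Grth\). Everything else is routine bookkeeping with the dictionaries of Section~\ref{s:affgrs}.
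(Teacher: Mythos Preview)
The paper states this proposition without proof, treating the equivalence of the three descriptions as immediate from the dictionaries between the group-theoretic, modular, and lattice models already set up earlier in Section~\ref{s:affgrs}. Your proposal is correct and supplies exactly the details the paper leaves implicit; in particular, your check that a \(\CC[t]\)-lattice in \(\CC(t)^m\) trivial away from \(\{0,s\}\) is actually contained in \(\CC[t,t^{-1},(t-s)^{-1}]^m\) (by writing \(L=g\CC[t]^m\) and using triviality at each \(a\) to force regularity of the entries of \(g\) and \(g^{-1}\)) is a genuine point that the paper passes over in silence.
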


The following result is the factorization property of the BD Grassmannian (see \cite[Prop. 3.13]{zhu2016introduction}).
\begin{proposition}\label{pr:polyno-taylor}
    The fibres \( \Gr_{0,s}\) of \( \Grbd \rightarrow \AA\) can be described as 
    \begin{equation*}
    \Gr_{0,s} \cong 
        \begin{cases} 
            \Gr \times \Gr_s & s \ne 0 \\
            \Gr              & s = 0\,.
        \end{cases}
    \end{equation*}
    In the modular realization, this isomorphism is given by restricting the vector bundle and trivialization.
    
    Suppose that \( s \ne 0 \). In the lattice realization, this is given by forming the specializations 
    $$
        L \mapsto (L(0), L(s))\,.
    $$
    Note that if \( L = g \CC[t]^m\) for \( g \in G([t,t^{-1}, (t-s)^{-1}])\), then 
    $$
        (L(0), L(s)) = (g \cO^m, g\cO^m_s)\,. 
    $$
    In the \( s = 0 \) case, the isomorphism is described in the same way, except that we just need to form \( L(0)\).
\end{proposition}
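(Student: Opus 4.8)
The plan is to prove the two cases of the stated isomorphism separately, working throughout with the lattice realizations: the case \(s=0\) is essentially a group-theoretic identity, while \(s\ne 0\) carries all the content. Since every construction below is visibly functorial in the test \(\CC\)-algebra, the resulting bijections upgrade to isomorphisms of ind-schemes, and the modular statements follow by transporting along the lattice--bundle dictionaries set up in \Cref{s:affgrs}.

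First, the case \(s=0\). Here \(\Gr_{0,0}=G(\CC[t,t^{-1}])/G(\CC[t])\), and I would show that \(L\mapsto L(0)=L\otimes_{\CC[t]}\cO\), induced by \(\CC[t,t^{-1}]\hookrightarrow\cK\), is a bijection onto \(\Gr=G(\cK)/G(\cO)\). Injectivity is soft: a \(\CC[t]\)-lattice \(L\subset\CC[t,t^{-1}]^m\) trivial away from \(0\) satisfies \(L=L_{(t)}\cap\CC[t,t^{-1}]^m\) in \(\CC(t)^m\), and \(L_{(t)}=L(0)\cap\CC(t)^m\) in \(\cK^m\) because the completion \(\CC[t]_{(t)}\to\cO\) is faithfully flat, so \(L(0)\) determines \(L\). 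Surjectivity is the one real input: it is the standard fact that the ``polynomial'' and ``formal'' models of the affine Grassmannian of \(\GL_m\) agree (see \cite{zhu2016introduction}). Explicitly one approximates a set of \(\cO\)-generators of a given \(\cO\)-lattice \(M\) by Laurent-polynomial vectors, obtaining a \(\CC[t]\)-lattice with completion \(M\), and then passes to its saturation \(L_{(t)}\cap\CC[t,t^{-1}]^m\) to make it trivial away from \(0\); this \(L\) equals \(g\,\CC[t]^m\) for a suitable \(g\in G(\CC[t,t^{-1}])\) and has \(L(0)=M\).

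Now the case \(s\ne 0\). First I would check the specialization map \(L\mapsto(L(0),L(s))\) has the asserted target. Because \(t-s\) is already a unit of \(\cO\) while adjoining \(t^{-1}\) to \(\cO\) gives \(\cK\), base change yields \(\CC[t,t^{-1},(t-s)^{-1}]\otimes_{\CC[t]}\cO=\cK\); hence \(L(0)=L\otimes_{\CC[t]}\cO\) is an \(\cO\)-lattice in \(\cK^m\), and symmetrically \(\CC[t,t^{-1},(t-s)^{-1}]\otimes_{\CC[t]}\cO_s=\cK_s\) makes \(L(s)\) an \(\cO_s\)-lattice in \(\cK_s^m\). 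The final formula drops out at once: when \(L=g\,\CC[t]^m\), tensoring commutes with the \(\GL_m\)-action, so \(L(0)=g\cO^m\) and \(L(s)=g\cO_s^m\). For bijectivity I would invoke Beauville--Laszlo gluing simultaneously at the two \emph{disjoint} points \(0\) and \(s\): a \(\CC[t]\)-lattice trivial away from \(\{0,s\}\) is equivalent to the datum of its two completions there, and any compatible pair glues back, the independence of the gluings being precisely the hypothesis \(0\ne s\). Concretely the inverse sends \((M_0,M_s)\in\Gr\times\Gr_s\) to the \(\CC[t]\)-module of those \(v\in\CC[t,t^{-1},(t-s)^{-1}]^m\) whose Laurent expansion at \(0\) lies in \(M_0\) and whose Laurent expansion at \(s\) lies in \(M_s\); using that \(\CC[t]\) is a PID, so that a lattice trivial away from \(\{0,s\}\) is reconstructed from its local lattices at \((t)\) and \((t-s)\), one checks this is inverse to \(L\mapsto(L(0),L(s))\).

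The main obstacle, and the only step that is not bookkeeping, is the surjectivity/algebraization input common to both cases: that every \(\cO\)-lattice, resp.\ \(\cO_s\)-lattice, arises from a \(\CC[t]\)-lattice, which is exactly where the Laurent-polynomial approximation and the saturation step are used. Once the lattice statement is in hand over an arbitrary base, the description ``by restricting the vector bundle and trivialization'' is just its translation under the moduli-of-bundles dictionary of \Cref{s:affgrs}, and flatness of \(\pi\) is immediate from the identification of its fibres with (products of) affine Grassmannians.
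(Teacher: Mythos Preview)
The paper does not supply its own proof of this proposition: it is stated with the parenthetical ``(see \cite[Prop.~3.13]{zhu2016introduction})'' and then used without further argument. So there is no in-paper proof to compare against; your task was effectively to reconstruct the standard argument from the cited reference.

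Your sketch is correct and follows the expected route. The \(s=0\) case is exactly the equivalence of the polynomial and formal models of \(\Gr\), which you handle by approximation plus saturation; the \(s\ne 0\) case is the factorization property, for which Beauville--Laszlo gluing at the two disjoint points is the standard mechanism, and your explicit description of the inverse (intersecting inside \(\CC[t,t^{-1},(t-s)^{-1}]^m\) the preimages of \(M_0\) and \(M_s\)) is the right one. This is essentially what one finds in Zhu's notes, so you have reproduced the argument the paper defers to.

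One minor comment: your closing remark that ``flatness of \(\pi\) is immediate from the identification of its fibres'' overstates things---fibrewise isomorphism with a fixed scheme does not by itself imply flatness of a family---but the proposition as stated does not assert flatness, so this is harmless for the proof at hand.
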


For \( s \ne 0\), there is an isomorphism \( \Gr_s \cong \Gr \) coming from the isomorphism \( \Ks \cong \cK\). 
Combining with \Cref{pr:polyno-taylor}, we obtain an isomorphism
$$
    \theta_s : \Gr_{0,s} \rightarrow \Gr \times \Gr 
$$

Similarly, we write \( \theta_0\) for the isomorphism \( \Gr_{0,0} \xrightarrow{\sim} \Gr \).

\subsection{The fusion construction}\label{ss:fuscon}
The following construction will be very important in this paper.  

Let \( \Gr_{0, \AA^\times} \) denote the preimage of \(\Ax = \AA \setminus \{0\}\) under the map \(\Gr_{0,\AA} \rightarrow \AA \).  

The isomorphisms \( \theta_s \) defined above glue together to a projection map
\begin{equation}\label{eq:mvitau}
    \theta : \Gr_{0, \AA^\times} \rightarrow \Gr \times \Gr \, .
\end{equation}
(This \(\theta\) matches the map \( \tau \) of \cite{mirkovic2007geometric}, except that we are working over \( \{0 \} \times \AA^\times\) instead of the complement of the diagonal in \( \AA\times\AA \).)

Let \( X_1, X_2 \subset \Gr\) be two subschemes, and consider the subscheme 
$$ 
    X_1 \bdfamo X_2 := \theta^{-1}(X_1 \times X_2) \subset \Gr_{0, \AA^\times} 
$$
Given a point \( s \in \Ax\), we write \( X_1 \bdfamf X_2  \) for the fibre of \( X_1 \bdfamo X_2 \) over \(s\). The map \( \theta_s\) identifies \( X_1 \bdfamf X_2 \subset \Gr_{0,s}\) with \( X_1 \times X_2 \subset \Gr \times \Gr\).

We define \( X_1 \ast_{\AA} X_2 \) to be the scheme-theoretic closure of \(  X_1 \bdfamo X_2 \) in \( \Grbd \). By construction, this is a flat family over \( \AA\).

The fibre \( X_1 \geofu X_2\) of \( X_1 \ast_{\AA} X_2 \) over \(0\) is a subscheme of \( \Gr_{0,0} \), but regarded as a subscheme of \( \Gr \) using {the isomorphism} \(\Gr_{0,0} \cong \Gr \).  We call this subscheme the (geometric) \new{fusion} of \( X_1 \) and \( X_2\).

\subsection{Some subschemes of affine Grassmannians}\label{ss:subgrs}

Going forward, we fix a pair of arbitrary effective dominant coweights $\lambda', \lambda'' $ of sizes $N',N''$ and a pair of (not necessarily dominant) effective coweights $\mu',\mu''$ also of sizes $N',N''$ such that $\mu = \mu' + \mu''$ is dominant. 
Let $ \lambda = \lambda' + \lambda'' $ and $ N = N'+N''$.

Using $\lambda,\lambda',\lambda''$ and $\mu,\mu',\mu''$ we define the subschemes of $\Gr$, $\Grth$, and $\Grbd$ which will be considered, give some properties, and say how they are related.  
\begin{definition}\label{def:sphschub}
    The \new{spherical Schubert cell} $\Gr^\lambda = G(\cO) t^\lambda$ and its closure 
    $ \overline\Gr^\lambda = \bigcup_{\gamma \le \lambda} \Gr^{\gamma} $ 
    a \new{spherical Schubert variety}.  
\end{definition}
\begin{definition}\label{def:sphfus}
    The \new{family of two spherical Schubert varieties} $\overline{\Gr}^{\lambda', \lambda''}_{0, \AA} = \overline{\Gr}^{\lambda'} \bdfamc \overline{\Gr}^{\lambda''} $.
\end{definition}
By a theorem of Zhu, $ \overline{\Gr}^{\lambda'} \geofu \overline{\Gr}^{\lambda''} $ is reduced and equal to $ \overline{\Gr}^{\lambda}$ (\cite[Proposition 3.1.14]{zhu2016introduction}).
If $s\ne0$, then the fibre $\overline{\Gr}_{0,s}^{\lambda', \lambda''}$ contains the open locus $ \Gr_{0,s}^{\lambda', \lambda''} =  \Gr^{\lambda'} \bdfamf \Gr^{\lambda''} $ a $ G(\CC[t])$-orbit whose lattice description is given in \Cref{le:Grl1l2} below.
Because we consider effective dominant $\lambda',\lambda''$, the spherical Schubert varieties and their fusions lie in the positive part of the thick affine Grassmannian.
\begin{lemma}\label{le:sphfusispos}
    The map $ \Gr_{0, \AA} \rightarrow \Grth$ restricts to a map $ \overline{\Gr}^{\lambda', \lambda''}_{0, \AA} \rightarrow \Grth^+$.
\end{lemma}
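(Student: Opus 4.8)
The plan is to translate the map of \Cref{pr:bd-th} into a condition on lattices and then verify it fibre by fibre over \(\AA\); the only real input is an elementary local-to-global statement for \(\CC[t]\)-lattices. By \Cref{pr:bd-th} the map \(\Gr_{0,\AA}\to\Grth\) sends \((L,s)\) to \(L\) (regarded inside \(\Kinf^m\)), and by \Cref{def:grplus} a lattice lies in \(\Grth^+\) precisely when it is contained in \(\CC[t]^m\); so what must be shown is that every \((L,s)\in\overline{\Gr}^{\lambda',\lambda''}_{0,\AA}\) has \(L\subseteq\CC[t]^m\). The local fact I would use is: if \(L\subseteq\CC(t)^m\) is a \(\CC[t]\)-lattice with \(L(a)\subseteq\cO_a^m\) for every \(a\in\AA\), then \(L\subseteq\CC[t]^m\). (Proof: \(Q=(L+\CC[t]^m)/\CC[t]^m\) is a finitely generated torsion \(\CC[t]\)-module, hence zero as soon as its completion at each closed point \(a\) vanishes; by flatness of \(\cO_a\) over \(\CC[t]\) that completion is \((L(a)+\cO_a^m)/\cO_a^m\), which is zero exactly because \(L(a)\subseteq\cO_a^m\); so \(Q=0\), i.e.\ \(L\subseteq\CC[t]^m\).)

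The main step is then to check the hypothesis of this fact on the fibres of \(\overline{\Gr}^{\lambda',\lambda''}_{0,\AA}\to\AA\). Any point \((L,s)\) of the family lies in \(\Grbd\), so \(L(a)=\cO_a^m\) for all \(a\neq 0,s\), and it remains to control \(L(0)\) and, when \(s\neq 0\), \(L(s)\). For \(s\neq 0\) the fibre is \(\overline{\Gr}^{\lambda',\lambda''}_{0,s}=\theta_s^{-1}(\overline{\Gr}^{\lambda'}\times\overline{\Gr}^{\lambda''})\); unwinding \(\theta_s\) through \Cref{pr:polyno-taylor} and the isomorphism \(\Gr_s\cong\Gr\) shows that \(L(0)\in\overline{\Gr}^{\lambda'}\) and that \(L(s)\) lies in the copy of \(\overline{\Gr}^{\lambda''}\) in \(\Gr_s\). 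Because \(\lambda'\) is effective dominant, \(\overline{\Gr}^{\lambda'}=\bigcup_{\gamma\leq\lambda'}\Gr^{\gamma}\) is contained in \(\Gr^+\) — a dominant \(\gamma\leq\lambda'\) has least entry \(\gamma_m\geq\lambda'_m\geq 0\), so \(t^{\gamma}\in M_m(\cO)\) — and similarly for \(\lambda''\); since the isomorphism \(\Gr_s\cong\Gr\) respects positive parts, in lattice terms this reads \(L(0)\subseteq\cO_0^m\) and \(L(s)\subseteq\cO_s^m\), and the local fact gives \(L\subseteq\CC[t]^m\). For \(s=0\) the fibre is \(\overline{\Gr}^{\lambda'}\geofu\overline{\Gr}^{\lambda''}=\overline{\Gr}^{\lambda}\) by the theorem of Zhu recalled above, and \(\lambda=\lambda'+\lambda''\) is effective dominant, so \(L(0)\subseteq\cO_0^m\) and we conclude as before. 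Hence the image of the family lies in \(\Grth^+\). (Alternatively, once the \(s\neq 0\) fibres are handled one can finish by observing that \(\Grth^+\) is closed in \(\Grth\) — the condition \(L\subseteq\CC[t]^m\) visibly cuts out a closed subscheme — so that the preimage of \(\Grth^+\) in \(\Grbd\) is closed and contains the dense locus \(\overline{\Gr}^{\lambda'}\bdfamo\overline{\Gr}^{\lambda''}\), hence its closure.)

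I expect the main obstacle to be purely bookkeeping: correctly tracking a point through \(\theta_s\), the factorization \(\Gr_{0,s}\cong\Gr\times\Gr_s\), the identification \(\Gr_s\cong\Gr\), and the various lattice dictionaries, together with the trivial check that a dominant coweight below an effective dominant one is again effective. The mathematical content — the local-to-global statement of the first step — is routine commutative algebra.
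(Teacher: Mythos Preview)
Your proof is correct, but it takes a different route from the paper. The paper argues group-theoretically: for \(s\neq 0\), the point \(t^{\lambda'}(t-s)^{\lambda''}\) lies in \(\Grth^+\) because \(\lambda',\lambda''\) are effective, and since \(\Grth^+\) is closed in \(\Grth\) and invariant under the \(G(\CC[t])\)-action, the entire orbit closure \(\overline{\Gr}_{0,s}^{\lambda',\lambda''}\) lands in \(\Grth^+\); the \(s=0\) fibre is then handled by the same orbit argument or by the closure observation you mention parenthetically at the end. By contrast, you work purely in the lattice picture, proving a local-to-global lemma (a \(\CC[t]\)-lattice lies in \(\CC[t]^m\) iff each completion does) and then verifying the local condition fibrewise via the factorization \(\theta_s\). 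Your approach is more hands-on and makes the lattice content explicit, but it requires extra bookkeeping and the auxiliary commutative-algebra statement; the paper's argument is shorter because the \(G(\CC[t])\)-invariance of \(\Grth^+\) lets one check a single point per orbit rather than unwinding \(\theta_s\) and the positivity of each \(\overline{\Gr}^\gamma\). Your parenthetical alternative at the end is in fact the spirit of the paper's proof, and recognizing the \(G(\CC[t])\)-invariance would let you dispense with the local-to-global lemma entirely.
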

\begin{proof}
    Let $ s \ne 0 $. Since $ \lambda', \lambda'' $ are effective, $ t^{\lambda'}, (t-s)^{\lambda''} \in \Grth^+ $.  Since $\Grth^+$ is closed and invariant under the action of $ G(\CC[t])$,  $ \overline{\Gr}_{0,s}^{\lambda', \lambda''} \subset \Grth^+$.
    
    For the $ s = 0$ fibre, a similar reasoning applies (or we can simply conclude by taking closure).
\end{proof}
\begin{definition}\label{def:klslice}
    The \new{Kazhdan--Lusztig slice} $\cW_\mu = G_1(\Oinf) t^\mu \subset \Grth $.
\end{definition}

In modular terms, $\cW_\mu$ corresponds to the locus of those $ (V, \varphi)$ such that $ V $ is isomorphic to the vector bundle $ \cO_{\PP}(\mu_1) \oplus \cdots \oplus \cO_{\PP}(\mu_m)$, and such that $ \varphi$ preserves the {Harder--Narasimhan filtration of $V$ at $ \infty$}. The lattice description of $ \cW_\mu \cap \Grth_+$ is given in Lemma \ref{le:Wmu} below. 
\begin{definition}\label{def:inftyorbit}
    The \new{semi-infinite orbit} $ S^\mu = N_-(\cK)t^\mu \subset \Gr $.
\end{definition}
\begin{definition}\label{def:inftyorbitfam}
    The \new{family of two semi-infinite orbits} $S^{\mu', \mu''}_{0,\AA} \subset \Grbd$ which we define to have fibres $ S^{\mu', \mu''}_{0,s} := \theta_s^{-1}( S^{\mu'}\times S^{\mu''}) $ for $ s \ne 0$ and fibre $ \theta_0^{-1}(S^{\mu})$ over $ s = 0 $.
\end{definition}
We can also describe the fibres as orbits
$$
    S^{\mu', \mu''}_{0,s} = 
    N_-(\CC[t, t^{-1}, (t-s)^{-1}])t^{\mu'} (t-s)^{\mu''} \subset G(\CC[t,t^{-1}, (t-s)^{-1}]) / G(\CC[t]) \,. 
$$
See \cite[Section~5.2]{baumann2020bases} for further details where this fibre is also described as the attracting locus for a $\Cx$ action.

In \cite[Prop~2.6]{kamnitzer2014yangians}, we observed that $ S^\mu \subset \cW_\mu$ (this uses that $\mu$ is dominant, which is the reason why we make this assumption).  More generally, we have the following result.

\begin{lemma}\label{le:inftyfusiskl} 
    Under the map $ \Grbd \rightarrow \Grth$, the image of $ S^{\mu', \mu''}_{0,\AA}$ lands in $ \cW_\mu$.
\end{lemma}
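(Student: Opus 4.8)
The plan is to verify the containment fibre by fibre over $\AA$, using that $\cW_\mu$ is closed in $\Grth$. First I would handle a fibre over a point $s \neq 0$. Here $S^{\mu',\mu''}_{0,s}$ is the $N_-(\CC[t,t^{-1},(t-s)^{-1}])$-orbit of the point $t^{\mu'}(t-s)^{\mu''} \in \Gr_{0,s}$. Under the map $\Grbd \to \Grth$ of \Cref{pr:bd-th}, this point maps to the class of $t^{\mu'}(t-s)^{\mu''}$ in $G(\Kinf)/G(\CC[t])$. Since $t-s$ is a unit in $\CC[t,t^{-1},(t-s)^{-1}]$, this lattice is obtained by a change of basis over that ring from $t^\mu \CC[t]^m$ after clearing the discrepancy between $(t-s)^{\mu''}$ and $t^{\mu''}$; more precisely, the diagonal matrix with entries $(t-s)^{\mu''_j}/t^{\mu''_j}$ (wait — these are units but not polynomial), so instead I would argue that $t^{\mu'}(t-s)^{\mu''} = g \cdot t^\mu$ for some $g \in G(\CC[t,t^{-1},(t-s)^{-1}])$. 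One checks directly that $g$ can be taken to be the diagonal matrix with entries $(1 - s/t)^{\mu''_j}$, which lies in $G_1(\Oinf) \cap G(\CC[t,t^{-1},(t-s)^{-1}])$ since each $(1-s/t)^{\mu''_j} \in 1 + t^{-1}\CC[t^{-1}]$. Hence $t^{\mu'}(t-s)^{\mu''} \in G_1(\Oinf) t^\mu = \cW_\mu$. Then, since $\cW_\mu$ is preserved by left multiplication by $N_-(\CC[t,t^{-1},(t-s)^{-1}]) \subset G_1(\Oinf)$ — wait, this is not quite right either, as $N_-(\CC[t,t^{-1},(t-s)^{-1}])$ need not sit inside $G_1(\Oinf)$.

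The cleaner route is to use \cite[Prop~2.6]{kamnitzer2014yangians}, which already gives $S^\mu \subset \cW_\mu$, together with the factorization/$\Cx$-action description of $S^{\mu',\mu''}_{0,s}$. Concretely, for $s \neq 0$, the isomorphism $\theta_s : \Gr_{0,s} \xrightarrow{\sim} \Gr \times \Gr$ identifies $S^{\mu',\mu''}_{0,s}$ with $S^{\mu'} \times S^{\mu''}$. The key point I would establish is that the map $\Grbd \to \Grth$ composed with $\theta_s^{-1}$ sends $S^{\mu'} \times S^{\mu''}$ into $\cW_\mu$. For this I would use the lattice descriptions: a pair $(L', L'') \in S^{\mu'} \times S^{\mu''}$ corresponds under $\theta_s^{-1}$ to a single $\CC[t]$-lattice $L \subset \CC[t,t^{-1},(t-s)^{-1}]^m$ with $L(0) = L'$ and $L(s) = (t-s)^{\mu''}$-type behaviour; its image in $\Grth$ is just $L$ viewed inside $\Kinf^m$. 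I then need to show $L \in \cW_\mu \cap \Grth$, i.e.\ the associated vector bundle on $\PP$ is $\cO(\mu_1) \oplus \cdots \oplus \cO(\mu_m)$ with $\varphi$ respecting the HN filtration at $\infty$. The factorization property lets me compute the bundle: away from $\{0,s\}$ it is trivial, and the local behaviour at $0$ and $s$ contributes total degree vector $\mu' + \mu'' = \mu$; since $\mu$ is dominant and $L$ comes from the lower-triangular orbit, the resulting bundle is exactly $\bigoplus \cO(\mu_j)$ and the trivialization at $\infty$ preserves the filtration — this is precisely the content generalizing $S^\mu \subset \cW_\mu$.

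Having disposed of the $s \neq 0$ fibres, I would pass to the $s = 0$ fibre: there $\theta_0^{-1}(S^\mu)$ maps, under $\Grbd \to \Grth$, to the image of $S^\mu \subset \Gr$ in $\Grth$, which lands in $\cW_\mu$ by \cite[Prop~2.6]{kamnitzer2014yangians} directly (using that the composite $\Gr \hookrightarrow \Grbd|_0 \to \Grth$ is the standard map and $\mu$ is dominant). Alternatively, and more uniformly, I would avoid the fibrewise split entirely: since $S^{\mu',\mu''}_{0,\AA}$ is irreducible with generic fibre $S^{\mu'} \times S^{\mu''}$ whose image in $\Grth$ lies in $\cW_\mu$, and $\cW_\mu$ is locally closed (indeed closed inside the relevant positive/bounded piece — I would invoke \Cref{le:sphfusispos}-type boundedness to reduce to a closed subset), the whole image lands in $\overline{\cW_\mu}$; then a dimension or bundle-type count shows it cannot hit the boundary, forcing it into $\cW_\mu$ itself.

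The main obstacle I anticipate is the identification, in the $s \neq 0$ case, of the single lattice $L$ attached to a pair $(L',L'') \in S^{\mu'} \times S^{\mu''}$ and the verification that its HN type at $\infty$ is exactly $\mu$ with the filtration preserved — i.e.\ genuinely checking that fusing two semi-infinite orbits does not "lose positivity" or break the Harder--Narasimhan condition at $\infty$. This is where the hypothesis that $\mu = \mu' + \mu''$ is dominant (even though $\mu', \mu''$ individually need not be) is essential, and it is the step that requires the most care; everything else is bookkeeping with the factorization isomorphism of \Cref{pr:polyno-taylor} and the definitions of $S^\mu$ and $\cW_\mu$.
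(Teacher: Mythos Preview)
Your proposal identifies the crux of the matter but does not actually resolve it. Each of your three routes stalls at precisely the same step: showing that a lattice of the form $L = g\,t^{\mu'}(t-s)^{\mu''}\CC[t]^m$ with $g \in N_-(\CC[t,t^{-1},(t-s)^{-1}])$ lies in $G_1(\Oinf)t^\mu$. You say this ``requires the most care'' and then leave it. The Harder--Narasimhan route merely restates the claim in bundle language; the closure route fails because $\cW_\mu$ is \emph{not} closed in $\Grth$ (it is a single $G_1(\Oinf)$-orbit, and you give no argument ruling out the boundary strata); and the factorization route via $\theta_s$ again reduces to the same unproven assertion about the HN type at $\infty$.

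Ironically, your very first instinct was the right one, and you abandoned it one move too early. Your diagonal matrix with entries $(1-s/t)^{\mu''_j}$ is exactly the element $h := (t-s)^{\mu''}t^{-\mu''} \in T_1(\Oinf)$ that the paper uses. The point you missed is what to do with the $N_-$ factor. The paper writes
\[
L \;=\; g\,t^{\mu'}(t-s)^{\mu''}\CC[t]^m \;=\; h\,(h^{-1}gh)\,t^{\mu}\CC[t]^m,
\]
observes that $h^{-1}gh \in N_-(\Kinf)$, and then uses the Iwasawa-type factorization $N_-(\Kinf) = (N_-)_1(\Oinf)\cdot N_-(\CC[t])$ to write $h^{-1}gh = n_1 n_2$. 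Dominance of $\mu$ enters exactly here: it guarantees $t^{-\mu}n_2 t^{\mu} \in N_-(\CC[t])$, so $n_2 t^\mu \CC[t]^m = t^\mu\CC[t]^m$, whence $L = (hn_1)\,t^\mu\CC[t]^m$ with $hn_1 \in G_1(\Oinf)$. That is the entire argument; there is no need for a fibrewise split, an HN computation, or a closure argument.
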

\begin{proof}
    Let $ L \in\Grth$ be a lattice in the image of $S^{\mu',\mu''}_{0,\AA}$. So we can write $ L = g t^{\mu'} (t-s)^{\mu''}\CC[t]^m$ for some $ g \in N_-(\CC[t, t^{-1}, (t-s)^{-1}]) $. 
    
    Let $ h =(t-s)^{\mu''} t^{-\mu''}  $.  Note that $ h \in T_1(\Oinf)$. 
    Moreover, $L = h (h^{-1} g h) t^{\mu}\CC[t]^m$. 
    Since $h^{-1} g h \in N_-(\cK_\infty)$ we can factor $ h^{-1} g h$ as $n_1 n_2$ for some $ n_1 \in {(N_-)}_1(\Oinf)$ and $n_2 \in {N_-}(\CC[t]) $. 
    
    As $ \mu $ is dominant, we see that $ t^{-\mu} n_2 t^\mu \in N_-(\CC[t]) $, and so $ L = h n_1 t^\mu \CC[t]^m$.  Since $ hn_1 \in G_1(\Oinf)$, the result follows. 
\end{proof}

\section{Matrices}\label{s:mats}
We now consider some subvarieties of the space of $ N\times N$ matrices, again using the coweights $ \lambda, \lambda', \lambda'', \mu, \mu', \mu''$ fixed in the previous section. 

\subsection{Adjoint orbits and their deformations}\label{ss:familiesofadjointorbits}
Recall that $\lambda=(\lambda_1,\dots,\lambda_m)$ is a partition of $ N$.
Given a point $ s \in \AA$ we write $ J_{s,\lambda}$ for the Jordan form matrix with eigenvalue $ s$ and Jordan blocks of sizes $ \lambda_1, \dots, \lambda_m$.
    
\begin{definition}\label{def:Olam}
The nilpotent (adjoint) orbit $ \OO^\lambda \subset M_N(\CC)$ of matrices conjugate to $ J_{0,\lambda}$. These matrices and the linear operators they represent will be said to have Jordan type $\lambda$.
\end{definition}  
\begin{definition}\label{def:Olamlam}
    For $ s \in \Ax$, the (adjoint) orbit $ \OO^{\lambda', \lambda''}_{0,s}$ of matrices conjugate to $ J_{0,\lambda'} \oplus J_{s,\lambda''}$.
    These matrices and the linear operators they represent will be said to have Jordan type $((0,\lambda'), (s,\lambda''))$.
\end{definition}  

We recall that these orbits have closures which are given by rank conditions.  
More precisely, we have
$$
    \overline{\OO}^\lambda = \{ A \in M_N(\CC) : \rk A^c \le N - \#\text{~boxes in first $c$ columns of }\lambda,  \text{ for $ c \in \NN$} \}
$$
and
\begin{equation}\label{eq:ranks}
\begin{split}
    \overline{\OO}^{\lambda', \lambda''}_{0,s} = \{ A \in M_N(\CC) : \rk A^c &\le N - 
    \#\text{~boxes in first $c$ columns of }\lambda',  \text{ for $ c \in \NN$} \\
    \rk (A-s)^c &\le N - \#\text{~boxes in first $c$ columns of }\lambda'',  \text{ for $ c \in \NN$} \}\,. 
\end{split}
\end{equation}

The following fact seems to be well-known but we could not find this exact statement in the literature.
\begin{proposition}\label{prop:adjoint}
    There exists a flat family $\overline{\OO}^{\lambda', \lambda''}_{0,\AA} \rightarrow \AA$ whose fibre over $s \in \AA$ is reduced and given by $\overline{\OO}^{\lambda', \lambda''}_{0,s}$ if $s \ne 0 $ and $\overline{\OO}^\lambda $ if $ s = 0$.
\end{proposition}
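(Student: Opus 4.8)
The plan is to construct the family explicitly as a subscheme of $M_N(\CC) \times \AA$ cut out by the natural family of rank conditions, and then verify flatness by checking that the Hilbert polynomials of the fibres are constant (equivalently, that the special fibre over $0$ is reduced with the expected dimension). Concretely, I would define $\overline{\OO}^{\lambda', \lambda''}_{0,\AA} \subset M_N(\CC) \times \AA$ by the equations
\[
    \rk A^c \le N - \#\{\text{boxes in first } c \text{ columns of } \lambda'\}, \qquad
    \rk (A - s)^c \le N - \#\{\text{boxes in first } c \text{ columns of } \lambda''\},
\]
for all $c \in \NN$ (finitely many conditions suffice), where $A$ is the matrix coordinate and $s$ the coordinate on $\AA$. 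Rank conditions are expressed scheme-theoretically by the vanishing of appropriate minors, so this is a genuine closed subscheme, and the projection to $\AA$ is the map in question. By \eqref{eq:ranks} the fibre over $s \ne 0$ is $\overline{\OO}^{\lambda', \lambda''}_{0,s}$, which is irreducible of dimension $N^2 - \dim Z(J_{0,\lambda'}) - \dim Z(J_{s,\lambda''})$; since $J_{0,\lambda'} \oplus J_{s,\lambda''}$ and $J_{0,\lambda'} \oplus J_{0,\lambda''}$ have the same Jordan block sizes (the eigenvalues merely move), this dimension is independent of $s$, and for $s = 0$ the fibre set-theoretically contains $\overline{\OO}^\lambda$.

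Next I would identify the scheme-theoretic fibre over $0$. The containment $\overline{\OO}^\lambda \subseteq (\text{fibre over } 0)_{\mathrm{red}}$ is clear from the rank conditions (setting $s = 0$ collapses the two families of conditions, and $\overline{\OO}^\lambda$ is exactly the locus where $\rk A^c$ is bounded by $N$ minus the number of boxes in the first $c$ columns of $\lambda = \lambda' + \lambda''$ — here one uses that the column-counting function is additive, i.e. the $c$-th column count of $\lambda'+\lambda''$ is the sum of those of $\lambda'$ and $\lambda''$, which follows because adding partitions adds their conjugates). For the reverse inclusion and reducedness, the cleanest route is a dimension count combined with flatness: if I can show the total space $\overline{\OO}^{\lambda', \lambda''}_{0,\AA}$ is irreducible of dimension $\dim \overline{\OO}^\lambda + 1$, then every fibre has dimension $\ge \dim\overline{\OO}^\lambda$, the generic fibre has exactly this dimension, the special fibre is cut out by one further equation hence has dimension $\le \dim\overline{\OO}^\lambda + 1$, and — since $\overline{\OO}^\lambda$ is a reduced irreducible component of it of the right dimension — a semicontinuity/Hilbert polynomial argument forces the special fibre to equal $\overline{\OO}^\lambda$ scheme-theoretically. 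Irreducibility of the total space follows because the open part lying over $\AA^\times$ is irreducible (it is a single orbit bundle, a fibration over $\AA^\times$ with irreducible fibre $\OO^{\lambda',\lambda''}_{0,s}$) and dense: any point of the fibre over $0$ is a limit, as $s \to 0$, of matrices of type $((0,\lambda'),(s,\lambda''))$, since one can conjugate such a matrix to put it in block form $J_{0,\lambda'} \oplus J_{s,\lambda''}$ and let $s \to 0$.

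I expect the main obstacle to be the scheme-theoretic (as opposed to set-theoretic) statement about the fibre over $0$: showing the ideal generated by the family of minors, after setting $s = 0$, is exactly the (radical) ideal of $\overline{\OO}^\lambda$, rather than some non-reduced thickening. The flatness-plus-dimension strategy above sidesteps computing this ideal directly — flatness of the family over the regular curve $\AA$ is automatic once the total space is irreducible and dominates $\AA$ with no embedded components over $0$, and then the generic fibre being reduced of dimension $d$ together with the special fibre containing the reduced irreducible $\overline{\OO}^\lambda$ of dimension $d$ pins it down. An alternative, if one wants to avoid the "no embedded components" subtlety, is to exhibit an explicit deformation retract or to use a known normality/Cohen--Macaulayness result for matrix varieties of this type (e.g. that determinantal schemes defined by these rank conditions are reduced and Cohen--Macaulay, which for nilpotent orbit closures is classical), from which flatness of the total family follows by the local criterion applied fibrewise. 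Either way, the essential input is that the two competing descriptions of the $s = 0$ fibre — "limit of the generic fibres" and "locus cut out by the degenerate rank conditions" — agree, and this reduces to the additivity of column counts under addition of partitions together with the fact that $\overline{\OO}^\lambda$ is already reduced.
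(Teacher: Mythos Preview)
Your approach has a genuine gap at the step where you identify the fibre over $0$. The claim that ``the column-counting function is additive, i.e.\ the $c$-th column count of $\lambda'+\lambda''$ is the sum of those of $\lambda'$ and $\lambda''$, which follows because adding partitions adds their conjugates'' is false: conjugation of partitions is not additive. Take $m=2$, $\lambda'=\lambda''=(1,0)$, so $\lambda=(2,0)$ and $N=2$. Your family is cut out by $\det A=0$ and $\det(A-s)=0$. Setting $s=0$ yields only $\det A=0$, whereas $\overline{\OO}^{(2,0)}$ is the nilpotent cone $\{\det A=0,\ \operatorname{tr} A=0\}$. The naive special fibre is three-dimensional, contains non-nilpotent matrices such as $\operatorname{diag}(1,0)$, and strictly contains the two-dimensional $\overline{\OO}^\lambda$.

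This also breaks your irreducibility argument for the total space. In the same example one computes $(\det A,\det(A-s))=(\det A,\,s(s-\operatorname{tr} A))$, whose zero locus has two three-dimensional components: the closure of the generic part $\{\det A=0,\ s=\operatorname{tr} A\}$, and the entire naive fibre $\{\det A=0\}\times\{0\}$ supported over $s=0$. So the scheme you wrote down is reducible, not flat over $\AA$, and its fibre over $0$ is not $\overline{\OO}^\lambda$. The two descriptions you hoped would agree --- ``limit of generic fibres'' versus ``locus cut out by the degenerate rank conditions'' --- do not.

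The paper sidesteps this by invoking Eisenbud--Saltman, who construct for any rank function $r$ a flat family $X_{r,W}\to W\subset\AA^k$ with reduced fibres given by explicit rank conditions depending on the collision pattern of the coordinates of $z\in W$. The authors take $r$ to be the column-count function of $\lambda$, set $k=\lambda_1$, and embed $\AA\hookrightarrow\AA^k$ by sending $s$ to a point with $\lambda'_1$ coordinates equal to $0$ and $\lambda''_1$ coordinates equal to $s$, arranged by a permutation $p$ that splits the columns of $\lambda$ into those of $\lambda'$ and $\lambda''$. Eisenbud--Saltman's theorem then supplies flatness and reducedness, and a short computation identifies the fibres as claimed. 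The correct family is not cut out by your two independent sets of rank conditions; rather, the rank bounds themselves deform with $s$ in a way governed by the Eisenbud--Saltman combinatorics.
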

In order to prove this proposition, we will need to recall some results from Eisenbud--Saltman \cite{eisenbud1989rank}. 

Let $ r $ be a decreasing, non-negative function of $\NN$ with $r(0) = N$, called a \new{rank function}.  Let $k $ be maximal such that $ r(k) \ne 0 $.
Let $ W$ be a subspace of $ \AA^k$. Eisenbud and Saltman define and study the flat family $ X_{r,W} \rightarrow W$ whose fibres are reduced and defined by rank conditions. We will now describe these fibres. 

Let $ z \in \AA^k$ be a point. Some of the coordinates of $z$ may be equal, so we introduce the following data to keep track of these equalities. Let $ \ell$ denote the number of distinct coordinates of $ z $. There exist a sequence of integers $\underline{k} = (k_0,\dots,k_\ell)$ such that $ 0=k_0< k_1< \dots < k_\ell = k $, a point $ \underline s \in\AA^\ell$, such that $s_a\ne s_b$ for any $a\ne b$, and a permutation  $ p $ of $ \{1, \dots, k\}$ such that 
$$ 
    s_a = z_{p(k_{a-1}+1)} = z_{p(k_{a-1} + 2)} =  \cdots = z_{p(k_a)}
$$ 
for each $a = 1,\dots,\ell$. 
Given the choice of $ \underline s$ we require that $ p $ is of minimal length. 
Given $z$, the data $\underline{k}, \underline s, p$ is unique up to a permutation of $ \{1, \dots, \ell\}$.

Next for $ a = 1, \dots, \ell $ and $ c = 1, \dots, k_{a+1} - k_a$, we define
$$
    r_{z,a}(c):= N + \sum_{d = 1}^c r(p(k_a + d)) - r(p(k_a + d) - 1)  \, . 
$$
By \cite[Corollary 2.2]{eisenbud1989rank}, the fibre of the flat family $ X_{r,W} $ over $ z$ is given by
\begin{equation}\label{eq:ESfibre}
    X_{r,z} = \{ A \in M_N(\CC) :\rk (A-s_a)^c\le r_{z,a}(c) \text{ for all } c , a \text{ as above}\}.
\end{equation}
We will now apply these ideas to prove our \Cref{prop:adjoint}.
\begin{proof}
For $ c\in \NN$, let
$$
    r(c) = N - \#\text{~boxes in first $c$ columns of }\lambda\,.
$$
It is easy to see that this is a rank function, with $ k = \lambda_1$, the number of columns of $ \lambda$.

Since $\lambda = \lambda' + \lambda''$, there exists a permutation $ p $ of $ \{1, \dots, k\}$ (the columns of $ \lambda$) such that $ p(1), \dots, p(k_1) $ are the columns of $ \lambda'$ and $ p(k_1+1), \dots, p(k)$ are the columns of $ \lambda''$. Here $ k_1 = \lambda'_1$ the number of columns of $ \lambda'$.

For $ s \in \AA$, we define $ z(s) \in \AA^k$ by 
\[
\begin{cases}
    z(s)_{p(c)} = 0 &  c = 1, \dots, k_1 \\
    z(s)_{p(c)} = s &  c = k_1 + 1, \dots, k\,.
\end{cases}    
\]

For $ s \ne 0$, we see that the equalities in $ z(s) $ give rise to the data of \(\ell = 2\), \(\underline k = (0,\lambda_1')\), \(\underline s = ( 0, s)\) and the permutation $p$.
We also see that
\begin{equation}\label{eq:rcols}
    \begin{split}
            r_{z(s), 1}(c) &=  N - \#\text{~boxes in first $c$ columns of }\lambda' \\
            r_{z(s), 2}(c) &=  N - \#\text{~boxes in first $c$ columns of }\lambda'' \,. 
    \end{split}
\end{equation}
On the other hand, for $z(0)$, we get that $ \ell = 1$ and that 
\begin{equation}\label{eq:rcol2}
    r_{z(0),1}(c) = 
    r(c) = N - \#\text{~boxes in first $c$ columns of }\lambda\,. 
\end{equation}
Let $ W = \{ z(s) : s \in \CC \}$. Combining \Cref{eq:ranks,eq:ESfibre,eq:rcols,eq:rcol2}, we see that the Eisenbud--Saltman family $ X_{r, W} \rightarrow W $ gives our family $ \overline{\OO}^{\lambda', \lambda''}_{0, \AA}$.
\end{proof}
\subsection{The Mirkovi\'c--Vybornov slice}\label{ss:mvyslice} 
Recall that $\mu=(\mu_1,\dots,\mu_m)$ is a partition of $N$. The \new{Mirkovi\'c--Vybornov slice} $\TT_\mu$ is the affine space of $N\times N$ matrices of the form $A = J_{0,\mu} + X$ where $X$ is a $\mu\times\mu$ block matrix with possibly nonzero entries $A_{ij}^1,\dots,A_{ij}^{\min(\mu_i,\mu_j)}$ in the first $\min(\mu_i,\mu_j)$ columns of the last row of each $\mu_i\times\mu_j$ block. 

By example, if $\mu = (3,2)$ then $A\in\TT_\mu$ looks like 
\[
    \left[
        \begin{BMAT}(e){ccc;cc}{ccc;cc} 
        0 & 1 & 0 & 0 & 0\\
        0 & 0 & 1 & 0 & 0\\
        A_{11}^1 & A_{11}^2 & A_{11}^3 & A_{12}^1 & A_{12}^2\\
        0 & 0 & 0 & 0 & 1\\
        A_{21}^1 & A_{21}^2 & 0 & A_{22}^1 & A_{22}^2
        \end{BMAT}\right] 
\]
for some $A_{ij}^k\in\CC$. 

To $ A \in \TT_\mu$ we will associate the $m\times m$ matrix of polynomials $g(A)$ in $ M_m(\CC[t]) $ whose $(i,j)$th entry is defined as follows.
\begin{equation}\label{eq:mvyofa}
    g(A)_{ij} = 
        \begin{cases} 
            t^{\mu_i} - \sum_{k=1}^{\mu_i} A^k_{ji} t^{k-1} & i = j     \,, \\
            - \sum_{k=1}^{\mu_i} A^k_{ji} t^{k-1}           & i \ne j   \,. 
        \end{cases}
\end{equation}

Continuing with the $\mu=(3,2)$ example, 
\[
    g(A) = 
    \begin{bmatrix}
        t^{3} - A_{11}^3 t^2 - A_{11}^2 t - A_{11}^1    & -A_{21}^2t - A_{21}^1         \\
        -A_{12}^2 t - A_{12}^1                          & t^{2} - A_{22}^2 t - A_{22}^1
    \end{bmatrix}\,. 
\]

In other words, the $ \mu_i\times\mu_j$ block of $ A $ is used to produce a polynomial which is inserted in the $(j,i)$ entry of the $m\times m$ matrix $ g(A)$. 

In $\TT_\mu$ we will be interested in a certain family of block upper-triangular matrices.
\begin{definition} 
The \new{upper-triangular} \mvy slice $\UU^{\mu', \mu''}_{0,\AA}\rightarrow \AA $ is defined by
$$
    \UU^{\mu', \mu''}_{0,\AA} := \{ (A,s) \in \TT_\mu \times \AA : g(A)_{ii} = t^{\mu'_{i}} (t-s)^{\mu''_{i}}, g(A)_{ij} = 0 \text{ for $ j < i $ }\}\,. 
$$
\end{definition}
So a matrix in $\UU^{\mu', \mu''}_{0,\AA}$ is weakly block upper-triangular and its diagonal blocks are given by the companion matrices for the polynomials $\{t^{\mu'_{i}} (t-s)^{\mu''_{i}} : i=1,\dots,m\}$.

For example, elements of $ \UU_{0,s}^{(1,1,0),(2,1,1)}$ look like 
\[
    \left[
        \begin{BMAT}(e){ccc;cc;c}{ccc;cc;c} 
        0 & 1 & 0 & 0 & 0 & 0\\
        0 & 0 & 1 & 0 & 0 & 0\\
        0 & -s^2 & 2s & A_{12}^1 & A_{12}^2 & A_{13}^1 \\
        0 & 0 & 0 & 0 & 1 & 0\\
        0 & 0 & 0 & 0 & s & A_{23}^1\\
        0 & 0 & 0 & 0 & 0 & s
        \end{BMAT}
    \right] \,.   
\]
Note that the fibre $  \UU^{\mu', \mu''}_{0,0}$ is the same as the intersection $ \TT_\mu \cap \mathfrak n $, where $ \mathfrak n $ denotes the set of strictly upper-triangular matrices.  

\section{Rising Action}\label{s:rising}
Throughout this section, we continue our notation of the previous sections. So $ \lambda, \lambda', \lambda''$ denote dominant effective coweights, with $ \lambda' + \lambda'' = \lambda$.  Also $ \mu, \mu', \mu''$ are effective coweights with $ \mu = \mu' + \mu''$ and we assume that $ \mu$ is dominant.

\subsection{Linear Operators from lattices}\label{s:latticefacts}
\begin{lemma}\label{le:Grl1l2}
    Let $ L \in \Grth^+ $.  Let $ s \in \Ax$. The following are equivalent:
    \begin{enumerate}[label=(\roman*)]
        \item \label{it:schub-pos} $ L $ is in the image of the map $ \Gr^{\lambda', \lambda''}_{0,s} \rightarrow \Grth^+$. 
        \item \label{it:schub-jt} The linear operator $ t $ on $ \CC[t]^m/L$ has Jordan type $((0,\lambda'), (s,\lambda''))$.
        \item \label{it:schub-lat} $ L \in G(\CC[t]) t^{\lambda'} (t-s)^{\lambda''}$.
    \end{enumerate}
\end{lemma}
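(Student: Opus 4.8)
The plan is to prove the equivalence by establishing the cycle $\ref{it:schub-pos} \Leftrightarrow \ref{it:schub-lat} \Rightarrow \ref{it:schub-jt} \Rightarrow \ref{it:schub-lat}$, though it may be cleaner to do $\ref{it:schub-lat} \Leftrightarrow \ref{it:schub-pos}$ and $\ref{it:schub-lat} \Leftrightarrow \ref{it:schub-jt}$ separately. For $\ref{it:schub-pos} \Leftrightarrow \ref{it:schub-lat}$, I would unwind the definitions: by \Cref{def:sphfus} and \Cref{ss:fuscon}, the open locus $\Gr^{\lambda',\lambda''}_{0,s}$ is $\theta_s^{-1}(\Gr^{\lambda'} \times \Gr^{\lambda''})$, and under the factorization isomorphism of \Cref{pr:polyno-taylor} together with the identifications $\Gr^{\lambda'} = G(\cO)t^{\lambda'}$, $\Gr^{\lambda''} = G(\cO_s)t^{\lambda''}$, a lattice $L = g\,t^{\lambda'}(t-s)^{\lambda''}\CC[t]^m$ with $g \in G(\CC[t,t^{-1},(t-s)^{-1}])$ has specializations $(g\cO^m\text{-part},\ g\cO_s^m\text{-part})$ landing in the right $G(\cO)$- and $G(\cO_s)$-orbits. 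So membership in $\Gr^{\lambda',\lambda''}_{0,s}$ amounts to $L$ being of this form, which under the map to $\Grth$ (\Cref{pr:bd-th}) is exactly $L \in G(\CC[t])t^{\lambda'}(t-s)^{\lambda''}$. One must check the orbit for $G(\CC[t])$ versus $G(\CC[t,t^{-1},(t-s)^{-1}])$ matches, using that $t^{\lambda'}(t-s)^{\lambda''}$ is already a polynomial matrix (since $\lambda',\lambda''$ are effective) so that clearing denominators does not enlarge the orbit — this is the kind of argument appearing in the proof of \Cref{le:inftyfusiskl}.

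For $\ref{it:schub-lat} \Rightarrow \ref{it:schub-jt}$, I would use that the Jordan type of the operator $t$ on $\CC[t]^m/L$ is invariant under the $G(\CC[t])$-action on $L$ (changing $L$ by $g \in G(\CC[t])$ induces an isomorphism of $\CC[t]$-modules $\CC[t]^m/L \cong \CC[t]^m/gL$ commuting with multiplication by $t$), so it suffices to compute it for the representative $L = t^{\lambda'}(t-s)^{\lambda''}\CC[t]^m$. For this diagonal lattice, $\CC[t]^m/L = \bigoplus_{i=1}^m \CC[t]/(t^{\lambda'_i}(t-s)^{\lambda''_i})$, and by CRT (since $s \ne 0$, the factors $t$ and $t-s$ are coprime) each summand splits as $\CC[t]/(t^{\lambda'_i}) \oplus \CC[t]/((t-s)^{\lambda''_i})$; multiplication by $t$ on $\CC[t]/(t^k)$ is a single nilpotent Jordan block of size $k$, and on $\CC[t]/((t-s)^k)$ a single Jordan block of size $k$ with eigenvalue $s$. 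Summing over $i$ gives precisely Jordan type $((0,\lambda'),(s,\lambda''))$.

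The converse $\ref{it:schub-jt} \Rightarrow \ref{it:schub-lat}$ is the step I expect to be the main obstacle, since it requires going from an isomorphism-class statement about the quotient module back to a statement about $L$ itself inside $\Grth^+$. The idea is: $L \in \Grth^+$ means $L \subset \CC[t]^m$ with $\CC[t]^m/L$ a finite-dimensional (torsion) $\CC[t]$-module; knowing the Jordan type of $t$ determines this module up to isomorphism as $\bigoplus_i \CC[t]/(t^{\lambda'_i}(t-s)^{\lambda''_i})$. Then one invokes the standard fact (essentially the theory of the affine Grassmannian / Smith normal form over the PID $\CC[t]$ localized appropriately, or the Cartan decomposition) that two lattices $L, L'$ in $\Grth^+$ with $\CC[t]^m/L \cong \CC[t]^m/L'$ as $\CC[t]$-modules lie in the same $G(\CC[t])$-orbit; applying this with $L' = t^{\lambda'}(t-s)^{\lambda''}\CC[t]^m$ gives $\ref{it:schub-lat}$. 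I would either cite this (it is implicit in the standard description of $G(\CC[t])$-orbits on $\Grth^+$, cf. the Cartan/Bruhat decomposition in \cite{zhu2016introduction}) or prove it directly by choosing generators of $L$ adapted to the module decomposition and applying elementary row/column operations over $\CC[t]$ to bring the inclusion $L \subset \CC[t]^m$ into the diagonal form $\mathrm{diag}(t^{\lambda'_i}(t-s)^{\lambda''_i})$ — the point being that all the elementary operations used are invertible over $\CC[t]$ precisely because we are working in the thick (polynomial) model rather than needing denominators. Care is needed to order the blocks correctly so the resulting diagonal matrix is literally $t^{\lambda'}(t-s)^{\lambda''}$ and not a permuted version, but a permutation matrix lies in $G(\CC[t])$ so this is harmless.
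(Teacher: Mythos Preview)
Your argument is correct and takes a somewhat different route from the paper. The paper's key step is \ref{it:schub-pos} $\Leftrightarrow$ \ref{it:schub-jt}, proved via the auxiliary \Cref{le:linalg}: for a finite-dimensional $\CC[t]$-module $V$, the generalized $a$-eigenspace of $t$ is canonically identified with $V\otimes_{\CC[t]}\cO_a$; applied to $V=\CC[t]^m/L$, this shows that the Jordan blocks of $t$ at eigenvalue $0$ (resp.\ $s$) are exactly those of $t$ on $\cO^m/L(0)$ (resp.\ $\cO_s^m/L(s)$), which by the standard local description of $\Gr^\gamma$ have type $\lambda'$ (resp.\ $\lambda''$). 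The equivalence with \ref{it:schub-lat} is then handled by computing specializations. You instead establish \ref{it:schub-jt} $\Leftrightarrow$ \ref{it:schub-lat} directly via CRT and Smith normal form over the PID $\CC[t]$, which is more elementary and bypasses \Cref{le:linalg} entirely.

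One caveat: your direct sketch of \ref{it:schub-pos} $\Leftrightarrow$ \ref{it:schub-lat} (matching the $G(\CC[t])$-orbit with the $G(\CC[t,t^{-1},(t-s)^{-1}])$-coset, ``clearing denominators'') is not a proof as written --- an arbitrary representative $g\in G(\CC[t,t^{-1},(t-s)^{-1}])$ need not be absorbable into $G(\CC[t])$ by the kind of argument in \Cref{le:inftyfusiskl}, which relies crucially on $N_-$ being unipotent. However, your own Smith normal form argument already closes this gap: any $L\in\Grth^+$ in the image of $\Gr_{0,s}$ has $\CC[t]^m/L$ supported at $\{0,s\}$, hence lies in \emph{some} orbit $G(\CC[t])\,t^{\gamma'}(t-s)^{\gamma''}$; then the easy direction \ref{it:schub-lat} $\Rightarrow$ \ref{it:schub-pos} applied to $(\gamma',\gamma'')$, together with the disjointness of the strata $\Gr^{\gamma'}\times\Gr^{\gamma''}$, forces $(\gamma',\gamma'')=(\lambda',\lambda'')$. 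The paper's route makes the BD factorization more visible and isolates \Cref{le:linalg} for reuse; yours is more self-contained.
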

\begin{proof}
First we recall that for $ L \in \Gr^+$, $ L \in \Gr^{\lambda} = G(\cO)t^\lambda $ if and only if $ t |_{\cO^m/L} $ has Jordan type $ \lambda$. 

Now assume that $ (L, s) \in \Gr^{\lambda', \lambda''}_{0,s}$.  By definition, $ L(0) \in \Gr^{\lambda'}$ and $L(s) \in \Gr^{\lambda''} $.  This means that $t $ acting on $\cO^m/L(0)$ has Jordan type $ \lambda'$ and $ t$ acting on $\cO_s^m/L(s)$ has Jordan type $ \lambda''$.  For $ a = 0, s$,  we see that $$\CC[t]^m/L \otimes_{\CC[t]} \cO_a \cong \cO_a^m / L(a). $$
\Cref{le:linalg} shows that the map $\CC[t]^m/L\to \cO^m/L(0)$ induces an isomorphism between the $0$-generalized eigenspace of $ t$ and $ \cO^m / L(0)$.   The same thing holds for the $s$-generalized eigenspace of $t $ and $ \cO_s^m/L(s)$. This shows that \cref{it:schub-pos} implies \cref{it:schub-jt} and the logic can be reversed to see that \cref{it:schub-jt} implies \cref{it:schub-pos}. 

On the other hand, if $ L = g t^{\lambda'} (t-s)^{\lambda''} \CC[t]^m$ for some $ g \in G(\CC[t])$, then $ L(0) = g t^{\lambda'}\cO^m $ since $ (t-s)^{\lambda''} \in G(\cO)$. In this way, we see that \cref{it:schub-lat} implies \cref{it:schub-jt} and the logic can be reversed to get equivalence.
\end{proof}

\begin{lemma}\label{le:linalg} 
Let $ V $ be a $ \CC[t]$-module which is finite-dimensional as a complex vector space.  For any  $ a \in \AA$, the map
$$ 
    V  \rightarrow V \otimes_{\CC[t]} \cO_a
$$
restricts to an isomorphism between the generalized $ a $-eigenspace of $ t $ and $ V \otimes_{\CC[t]} \cO_a$
\end{lemma}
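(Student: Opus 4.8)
The statement is a standard fact about finite-length modules over $\CC[t]$, so the natural plan is to use the structure theory of such modules — i.e. the primary decomposition. First I would observe that since $V$ is finite-dimensional over $\CC$, the operator $t$ acting on $V$ has finitely many eigenvalues, and $V$ decomposes as a direct sum $V = \bigoplus_{b} V_{(b)}$ over those eigenvalues $b \in \AA$, where $V_{(b)}$ is the generalized $b$-eigenspace, i.e. the submodule on which $(t-b)$ acts nilpotently. Equivalently, this is the decomposition of $V$ according to the support of the $\CC[t]$-module $V$: $V_{(b)}$ is the localization $V_{\mathfrak{m}_b}$ where $\mathfrak{m}_b = (t-b)$, and it is also a $\CC[t]$-submodule of $V$ since the idempotents realizing the decomposition are polynomials in $t$.

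The key step is then to compute $V \otimes_{\CC[t]} \cO_a$. Since tensor product commutes with direct sums, $V \otimes_{\CC[t]} \cO_a = \bigoplus_b \big( V_{(b)} \otimes_{\CC[t]} \cO_a \big)$. For $b \ne a$, the element $t - b$ is invertible in the local ring $\cO_a$ (as $b$ is a unit in $\CC[[t-a]]$ minus... more precisely $t-b = (t-a) + (a-b)$ is a unit since $a - b \ne 0$), while $t-b$ acts nilpotently on $V_{(b)}$; hence $V_{(b)} \otimes_{\CC[t]} \cO_a = 0$, because multiplication by the unit $t-b$ is simultaneously invertible and nilpotent on this module, forcing it to vanish. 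For $b = a$: the module $V_{(a)}$ is already $(t-a)$-primary, in particular it is a module over $\CC[t]/(t-a)^n$ for some $n$, which is a quotient of $\cO_a = \CC[[t-a]]$; so the natural map $V_{(a)} \to V_{(a)} \otimes_{\CC[t]} \cO_a$ is an isomorphism (tensoring an $\cO_a$-module — viewed via $\CC[t] \to \cO_a$ — with $\cO_a$ over $\CC[t]$ does nothing, since $\cO_a \otimes_{\CC[t]} \cO_a \to \cO_a$ is onto and $V_{(a)}$ is already an $\cO_a$-module). Assembling, $V \otimes_{\CC[t]} \cO_a \cong V_{(a)}$, and one checks this identification is induced by the canonical map $V \to V \otimes_{\CC[t]} \cO_a$ restricted to the summand $V_{(a)}$, which is exactly what the lemma asserts.

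I would spell out the two vanishing/isomorphism claims carefully: the vanishing $V_{(b)} \otimes_{\CC[t]} \cO_a = 0$ for $b \ne a$ follows since $(t-b)$ is a unit of $\cO_a$, so it acts invertibly on any $\cO_a$-module, in particular on $V_{(b)} \otimes_{\CC[t]} \cO_a$; but it also acts nilpotently (base-changed from its nilpotent action on $V_{(b)}$); an operator that is both invertible and nilpotent on a module is the zero operator on the zero module. The isomorphism for $b = a$ is cleanest phrased as: the ring map $\CC[t] \to \cO_a$ factors, after killing the annihilator of $V_{(a)}$, through the local Artinian ring $\CC[t]/(t-a)^n \xrightarrow{\sim} \cO_a/(t-a)^n$, so $- \otimes_{\CC[t]} \cO_a$ and $- \otimes_{\CC[t]} \CC[t]/(t-a)^n$ agree on $V_{(a)}$, and the latter is the identity.

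The proof has no real obstacle; the only point requiring a little care is making sure the eigenspace decomposition is a decomposition of $\CC[t]$-modules (it is, via polynomial idempotents / the Chinese Remainder Theorem applied to $\prod_b (t-b)^{n_b}$, which annihilates $V$) and that the resulting isomorphism $V \otimes_{\CC[t]} \cO_a \cong V_{(a)}$ is compatible with the canonical map from $V$ — but this is immediate from naturality of base change with respect to the inclusion $V_{(a)} \hookrightarrow V$ and the split projection $V \twoheadrightarrow V_{(a)}$.
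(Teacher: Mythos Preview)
Your proof is correct and follows essentially the same route as the paper: decompose $V$ into generalized eigenspaces $E_b$, kill the $b\neq a$ summands after tensoring with $\cO_a$ using that $t-b$ is simultaneously a unit in $\cO_a$ and nilpotent on $E_b$, and then check that $E_a \to E_a \otimes_{\CC[t]} \cO_a$ is an isomorphism. The only cosmetic difference is in this last step: the paper invokes the structure theorem to reduce to the cyclic case $E_a = \CC[t]/(t-a)^k$, whereas you argue directly via change of rings through $\CC[t]/(t-a)^n \cong \cO_a/(t-a)^n$; both are fine, and your version is arguably cleaner since it avoids an unnecessary further decomposition. (Your first parenthetical justification, appealing only to surjectivity of $\cO_a \otimes_{\CC[t]} \cO_a \to \cO_a$, is not by itself sufficient, but the change-of-rings formulation you give afterwards is.)
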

\begin{proof}
    For any $ b \in \AA$, let $ E_b $ denote the generalized $b$-eigenspace of $t$.  Then $ V = \oplus_{b \in \CC} E_b$.  Since $ t - b$ is invertible in $ \cO_a$ and $ t -b $ acts nilpotently on $ E_b$, we see that $ E_b \otimes_{\CC[t]} \cO_a = 0 $.
    
    So it suffices to show that $ E_a \rightarrow E_a \otimes_{\CC[t]} \cO_a$ is an isomorphism.  By the classification of modules over $ \CC[t]$, it suffices to check this when  $ E_a = \CC[t]/(t-a)^k$, where it is clearly true.
\end{proof}

\subsection{Matrices from lattices}\label{ss:mvyisos}

\begin{lemma}\label{le:Wmu}
Let $ L \in \Grth^+$.  The following are equivalent:
\begin{enumerate}[label=(\roman*)]
    \item \label{it:slice} $ L \in \cW_\mu$.
    \item \label{it:basis} $ L = \Sp_{\CC[t]}(v_1, \dots, v_m)$ for some $ v_i $ of the form $ v_i = t^{\mu_i} e_i + \sum_{j=1}^m p_{ij} e_j $ where $ p_{ij} \in \CC[t] $ has degree less than $ \min(\mu_i, \mu_j)$.
    \item \label{it:t-act} For all $ i $, 
    $$ t^{\mu_i} e_i \in \Sp_\CC(\{t^k e_j : 0 \le k < \min(\mu_i, \mu_j), 1 \le j \le m \}) + L. $$
\end{enumerate}
Moreover, for such $L $, $ \beta_\mu := \{ [t^k e_i] : 0 \le k < \mu_i, 1 \le i \le m\}$ forms a basis for $ \CC[t]^m/L$. 
\end{lemma}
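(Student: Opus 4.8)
The lemma states three equivalences characterizing when a lattice $L \in \Grth^+$ lies in the Kazhdan–Lusztig slice $\cW_\mu$, together with a "moreover" about $\beta_\mu$ being a basis of $\CC[t]^m/L$. I would organize the proof around the cycle $\ref{it:basis} \Rightarrow \ref{it:t-act} \Rightarrow \ref{it:slice} \Rightarrow \ref{it:basis}$, proving the basis statement along the way (it falls out naturally once we have the normal form in \ref{it:basis}).

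\emph{Step 1: \ref{it:basis} $\Rightarrow$ basis statement.} Suppose $L = \Sp_{\CC[t]}(v_1, \dots, v_m)$ with $v_i = t^{\mu_i} e_i + \sum_j p_{ij} e_j$, $\deg p_{ij} < \min(\mu_i, \mu_j)$. The key observation is a "triangularity" of degrees: modulo $L$, each $t^{\mu_i} e_i$ is congruent to $-\sum_j p_{ij} e_j$, a combination of $t^k e_j$ with $k < \min(\mu_i,\mu_j) \le \mu_j$. So the monomials $\{t^k e_i : k \ge \mu_i\}$ are all expressible, modulo $L$, in terms of $\{t^k e_i : k < \mu_i\}$ — inductively on the total degree, using that applying $t$ to $t^{\mu_i} e_i$ gives $t^{\mu_i+1}e_i$, rewritten via $v_i$ as $\sum_j (tp_{ij})(t)e_j$ whose $e_j$-part still has degree $< \mu_j$ once we reduce. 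Hence $\beta_\mu$ spans $\CC[t]^m/L$. For linear independence I would count dimensions: $\CC[t]^m/L$ has $\CC$-dimension $|\mu| = N$ because $L$ is a $\CC[t]$-lattice of colength equal to the valuation of $\det$ of a generating matrix, and the matrix with columns $v_i$ has determinant a monic polynomial of degree $\sum_i \mu_i$ (its leading term comes from the diagonal $t^{\mu_i}$). Since $|\beta_\mu| = \sum \mu_i = N$, spanning forces a basis. This simultaneously shows \ref{it:basis} $\Rightarrow$ \ref{it:t-act} since $t^{\mu_i}e_i$ is a $\CC$-combination of $\beta_\mu$ modulo $L$, and the $\beta_\mu$ elements involved are exactly of the form $t^k e_j$ with $k < \mu_j$; one checks the extra constraint $k < \min(\mu_i,\mu_j)$ comes directly from $\deg p_{ij} < \min(\mu_i,\mu_j)$ and is preserved under the reduction.

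\emph{Step 2: \ref{it:t-act} $\Rightarrow$ \ref{it:slice}.} Here I would translate the condition on $L$ into the group-theoretic description $\cW_\mu = G_1(\Oinf)t^\mu$. Given \ref{it:t-act}, I want to produce generators of $L$ in the normal form of \ref{it:basis} and then exhibit an element $g \in G_1(\Oinf)$ with $L = g t^\mu \CC[t]^m$. The condition \ref{it:t-act} lets me, by downward induction, clear the $e_i$-component of each generator down to degree exactly $\mu_i$ (monic) and all other components $e_j$ down to degree $< \min(\mu_i,\mu_j)$; this is essentially a Gaussian-elimination argument within $\CC[t]^m$. Having the normal form, the matrix $g_0$ whose columns are the $v_i$ satisfies $L = g_0 \CC[t]^m$, and $g := g_0 t^{-\mu}$ lies in $M_m(\CC[t^{-1}])$ with each diagonal entry $\to 1$ and off-diagonal $\to 0$ as $t \to \infty$ (using $\deg p_{ij} < \mu_j$), i.e. $g \in G_1(\Oinf)$; invertibility in $\Oinf$ follows since $\det g = \det g_0 \cdot t^{-N}$ is a unit in $\Oinf$. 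Hence $L = g t^\mu \CC[t]^m \in \cW_\mu$.

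\emph{Step 3: \ref{it:slice} $\Rightarrow$ \ref{it:basis}.} Conversely if $L \in \cW_\mu$, write $L = g t^\mu \CC[t]^m$ with $g \in G_1(\Oinf)$, so $g = 1 + (\text{lower order in } t)$ entrywise, i.e. $g_{ii} = 1 + O(t^{-1})$, $g_{ij} = O(t^{-1})$. Then the columns of $g t^\mu$ are vectors $w_i$ with $e_i$-component $t^{\mu_i} + (\text{lower degree})$ and $e_j$-component ($j\ne i$) of degree $< \mu_i$. But these generate $L$ only as an $\Oinf \cap \CC[t]$... — I need $L \subset \CC[t]^m$, which is exactly the hypothesis $L \in \Grth^+$. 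Using column operations over $\CC[t]$ (which don't change $L$) I reduce the $e_j$-component of $w_i$ modulo $w_j$ to have degree $< \mu_j$; combined with degree $< \mu_i$ this gives $< \min(\mu_i,\mu_j)$, yielding exactly the form in \ref{it:basis}. This is the same elimination as in Step 2, run in the reverse logical direction.

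\emph{Main obstacle.} The technical heart is the bookkeeping in the Gaussian-elimination / normal-form arguments — making sure that reducing the $e_j$-component of the $i$-th generator modulo the $j$-th generator genuinely achieves degree $< \min(\mu_i, \mu_j)$ and not merely $< \mu_j$, and that the whole reduction terminates. The slick way to handle this is to pick a consistent ordering (e.g. reduce in order of decreasing $\mu_i$, or argue via the valuation filtration at $\infty$) so that once a generator is in normal form it is never disturbed by later reductions. I expect no conceptual difficulty beyond \Cref{le:linalg}-style linear algebra and the colength/determinant degree count; the only place to be careful is confirming $\dim_\CC \CC[t]^m/L = N$ so that "spans" upgrades to "basis."
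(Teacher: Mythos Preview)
Your proposal follows essentially the same route as the paper: the same cycle of implications, the same column-reduction argument for (i)$\Rightarrow$(ii), and the same immediate passage (ii)$\Rightarrow$(iii). One point to correct in your Step~2: condition (iii) is not a tool for Gaussian elimination on pre-existing generators of $L$ --- rather, it \emph{directly} hands you elements $v_i := t^{\mu_i}e_i - \sum_j p_{ij}e_j \in L$ already in the form of (ii), which is exactly how the paper proceeds (it then simply asserts these generate $L$, without further argument). Your treatment of the ``moreover'' clause is actually more thorough than the paper's: you carry out the dimension count for linear independence, whereas the paper only sketches spanning and leaves independence implicit.
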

\begin{proof}
    Let $ L \in \cW_\mu$.  Then $ L = \Sp_{\CC[t]}(v_1, \dots, v_m) $ for some $ v_i $ with $ v_i = t^{\mu_i} e_i + \sum_{j=1} q_{ij}t^{\mu_i} e_j $ and $ q_{ij} \in t^{-1} \Oinf$.  Since $ L \in \Grth^+ $, we see that $ v_i \in \CC[t]^m$ which means that $ p_{ij} := q_{ij}t^{\mu_i} $ lies in $ \CC[t]$.  By construction, the polynomial $ p_{ij}$ has degree less than $ \mu_i$.
    
    Fix $ i$ and suppose that for some $ j$, $ \mu_j < \mu_i$.  In this case, we can alter our basis to $ v'_i = v_i - q v_j$ for some polynomial $q \in \CC[t]$.  
    This gives us new polynomials $ p'_{ij} = p_{ij} - q (t^{\mu_j} + p_{jj}) $.  In this way, we can ensure that $ p_{ij} $ has degree less than $ \min(\mu_i, \mu_j)$.  
    Thus \cref{it:slice} implies \cref{it:basis}.
    
    Suppose that $ L = \Sp_{\CC[t]}(v_1, \dots, v_m)$ as in \cref{it:basis}.  Then
    $$t^{\mu_i} e_i - v_i \in \Sp_\CC(\{t^k e_j :  k < \min(\mu_i, \mu_j), 1 \le j \le m \})  \,. $$
    Hence \cref{it:basis} implies \cref{it:t-act}.  

    Finally, given \cref{it:t-act}, then we can see $ v_i := t^{\mu_i} e_i - \sum_{j=1}^m p_{ij} e_j \in L $ for some $ p_{ij} \in \CC[t]$ of degree less than $ \min(\mu_i,\mu_j) $.  It is easy to see that $ L = \Sp_\cO(v_1, \dots, v_m) $ and so $ L \in \cW_\mu$.  
    To show that $ \beta_\mu$ forms a basis for $ \CC[t]^m/L$, it suffices to show that for each $ i$, 
    $$ t^{\mu_i} e_i  \in \Sp_\CC(  t^k e_i : 0 \le k < \mu_i, 1 \le i \le m) + L\,.$$ This follows immediately from \cref{it:t-act}.
\end{proof}

Given $ A \in \TT_\mu$, recall the definition of $ g(A)$ given in \Cref{eq:mvyofa}.  Note that $g(A)t^{-\mu} \in G_1(\Oinf)$.
Since $ g(A) \in M_m(\CC[t]) \cap G_1(\Oinf)t^\mu$, we will regard $ g(A)$ as giving an element of $ \Grth^+ \cap \cW_\mu$.  (Alternatively, we can see that $ g(A) \CC[t]^m$ satisfies the condition of \cref{it:basis} from \Cref{le:Wmu}.)

The following result is called the Mirkovi\'c--Vybornov isomorphism \cite{mirkovic2007quiver}.  In its present form, it can be found in \cite[Theorem 3.2]{cautis2018categorical}, except that we have tweaked both maps with a matrix transpose $[\phantom{A}]^\tr$.

\begin{theorem}\label{th:TmuWmu}
The map $ \TT_\mu \rightarrow \Grth^+ \cap \cW_\mu $ given by $ A \mapsto g(A)\CC[t]^m $ is an isomorphism with inverse given by
$$ 
    L \mapsto [t|_{\CC[t]^m/L} ]^{\tr}_{\beta_\mu} \,. 
$$
\end{theorem}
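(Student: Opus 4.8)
The plan is to verify that the two maps $A \mapsto g(A)\CC[t]^m$ and $L \mapsto [t|_{\CC[t]^m/L}]^{\tr}_{\beta_\mu}$ are well-defined and mutually inverse. First I would check that $A \mapsto g(A)\CC[t]^m$ does land in $\Grth^+ \cap \cW_\mu$: this is already observed in the paragraph preceding the statement, since $g(A) \in M_m(\CC[t])$ and $g(A)t^{-\mu} \in G_1(\Oinf)$, so $g(A)\CC[t]^m$ is a $\CC[t]$-lattice contained in $\CC[t]^m$ lying in $G_1(\Oinf)t^\mu$. Equivalently, writing out the columns of $g(A)$ shows they have exactly the form required by condition \cref{it:basis} of \Cref{le:Wmu}, where $p_{ij} = -\sum_k A^k_{ji}t^{k-1}$ has degree $< \mu_i$; the degree bound $\min(\mu_i,\mu_j)$ comes from the fact that $A^k_{ji}$ is only nonzero for $k \le \min(\mu_i,\mu_j)$, which is exactly the shape constraint on $\TT_\mu$. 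Next I would check the second map is well-defined on $\Grth^+ \cap \cW_\mu$: by \Cref{le:Wmu}, for such $L$ the set $\beta_\mu = \{[t^k e_i] : 0 \le k < \mu_i\}$ is a basis of $\CC[t]^m/L$, so the matrix of multiplication by $t$ in this basis makes sense, and I need to argue that this matrix (transposed) actually lands in $\TT_\mu$ — i.e. that it has the companion-block-plus-correction shape. This follows by computing $t \cdot t^k e_i$ for $0 \le k < \mu_i$: for $k < \mu_i - 1$ this is just $t^{k+1}e_i$, giving the superdiagonal $1$'s, and for $k = \mu_i - 1$ we use the relation from $v_i \in L$, namely $t^{\mu_i}e_i \equiv \sum_j p_{ij}e_j \pmod L$, which expresses $[t^{\mu_i}e_i]$ in terms of the basis vectors $[t^\ell e_j]$ with $\ell < \min(\mu_i,\mu_j)$ — exactly the entries allowed to be nonzero in the last row of the $\mu_i \times \mu_j$ block of $\TT_\mu$. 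The transpose is needed because $g(A)$ inserts the $\mu_i\times\mu_j$ block data into the $(j,i)$ entry, as noted in the discussion of \Cref{eq:mvyofa}.

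Having both maps well-defined, I would then verify they are mutually inverse, which at this point is essentially a bookkeeping check matching up the coefficients $A^k_{ij}$ with the entries of the companion-type matrix. Starting from $A \in \TT_\mu$, form $L = g(A)\CC[t]^m$; its defining relations are $t^{\mu_i}e_i = \sum_j \big(\sum_k A^k_{ji}t^{k-1}\big)e_j$ in $\CC[t]^m/L$ (reading off column $i$ of $g(A)$, recalling $g(A)_{ii}$ has the extra $t^{\mu_i}$ term). Computing the matrix of $t$ acting on $\CC[t]^m/L$ in the basis $\beta_\mu$, the only nontrivial rows are those recording $t \cdot t^{\mu_i-1}e_i$, and their entries are precisely the $A^k_{ji}$; transposing relocates these to the correct block of $\TT_\mu$, recovering $A$. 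Conversely, starting from $L$, the matrix $[t|_{\CC[t]^m/L}]^\tr_{\beta_\mu}$ encodes the polynomials $p_{ij}$ via its nonzero block entries, and plugging those coefficients back into \Cref{eq:mvyofa} reconstructs the columns $v_i = t^{\mu_i}e_i + \sum_j p_{ij}e_j$ that span $L$, so $g$ of that matrix gives back $L$. I would present this as a direct computation in coordinates, perhaps illustrated on the running $\mu = (3,2)$ example.

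The main obstacle, and the step deserving the most care, is the well-definedness of the inverse map — specifically verifying that $[t|_{\CC[t]^m/L}]^{\tr}_{\beta_\mu}$ genuinely has the $\TT_\mu$ shape, i.e. that in the block decomposition indexed by $\mu$, all superdiagonal-of-blocks entries are the standard Jordan $1$'s and the only free parameters sit in the first $\min(\mu_i,\mu_j)$ columns of the last row of each $\mu_i\times\mu_j$ block. This is where the degree bound $\deg p_{ij} < \min(\mu_i,\mu_j)$ from \Cref{le:Wmu}\cref{it:basis} is essential: without the sharpened bound (rather than just $\deg p_{ij} < \mu_i$), the reduction of $[t^{\mu_i}e_i]$ modulo $L$ would not be supported only on the allowed basis vectors $[t^\ell e_j]$ with $\ell < \min(\mu_i,\mu_j)$. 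This is precisely why \Cref{le:Wmu} was set up with condition \cref{it:basis} in that form, and why the lemma proof included the basis-change step eliminating the higher-degree terms of $p_{ij}$ when $\mu_j < \mu_i$. Everything else is linear-algebra bookkeeping that the running example already makes transparent, and one could alternatively cite \cite[Theorem 3.2]{cautis2018categorical} directly, noting only the transpose adjustment, since the isomorphism itself is the classical Mirkovi\'c--Vybornov result.
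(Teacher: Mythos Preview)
Your argument is correct, but the paper does not actually prove this statement: the sentence immediately preceding the theorem records that the result ``can be found in \cite[Theorem 3.2]{cautis2018categorical}, except that we have tweaked both maps with a matrix transpose,'' and no further argument is supplied. So you have given a direct, self-contained verification where the paper is content to cite the literature --- an option you yourself flag in your final sentence. What your write-up buys is that the reader sees explicitly why the sharpened degree bound $\deg p_{ij} < \min(\mu_i,\mu_j)$ in \Cref{le:Wmu}\cref{it:basis} is exactly what forces $[t|_{\CC[t]^m/L}]^{\tr}_{\beta_\mu}$ to have the $\TT_\mu$ shape; this is indeed the only point with any content, and you have identified it correctly.

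Two harmless bookkeeping slips, for the record. From $v_i = t^{\mu_i}e_i + \sum_j p_{ij}e_j \in L$ one gets $t^{\mu_i}e_i \equiv -\sum_j p_{ij}e_j$, not $+\sum_j p_{ij}e_j$. And when you read off column $i$ of $g(A)$, the entry in row $j$ is $g(A)_{ji} = -\sum_k A^k_{ij}t^{k-1}$ (swap the indices in \Cref{eq:mvyofa}), so the relation in $\CC[t]^m/L$ is $t^{\mu_i}e_i = \sum_j\sum_k A^k_{ij}t^{k-1}e_j$ rather than with $A^k_{ji}$. These two swaps are internally consistent in your sketch and cancel in the composite, so the mutual-inverse conclusion is unaffected, but if you write out the running $\mu=(3,2)$ example you will want the indices aligned with the paper's conventions.
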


\subsection{Upper-triangularity and the Mirkovi\'c--Vybornov isomorphism}\label{ss:upmvy}
For the next result, we will consider the ``intersection'' of $ \overline{\Gr}^{\lambda', \lambda''}_{0,\AA} $ with $\cW_\mu$. As $  \overline{\Gr}^{\lambda', \lambda''}_{0,\AA} $ is not a subscheme of $ \Grth$, by this intersection, we really mean the preimage of $ \cW_\mu$ under the composition
$$ 
    \overline{\Gr}^{\lambda', \lambda''}_{0,\AA}  \hookrightarrow \Grbd \rightarrow \Grth\,.
$$
This is not a very serious abuse of notation, since the map $ \Grbd \rightarrow \Grth $ is almost injective. In a similar way, we will write $ \overline{\OO}^{\lambda', \lambda''}_{0,\AA} \cap \TT_\mu$ using the ``almost injective'' map $ \overline{\OO}^{\lambda', \lambda''}_{0,\AA} \rightarrow M_N(\CC)$. 

The following refinement of the \mvy isomorphism is a special case of \cite[Theorem 5.3]{mirkovic2007quiver}. 
\begin{theorem}\label{th:OGrl}
There is an isomorphism
$$
    \overline{\OO}^{\lambda', \lambda''}_{0,\AA} \cap \TT_\mu \cong \overline\Gr^{\lambda', \lambda''}_{0,\AA} \cap \cW_\mu 
$$
given by $ (A,s) \mapsto (g(A)\CC[t]^m, s)$.
\end{theorem}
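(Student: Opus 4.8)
The plan is to build the isomorphism fibrewise over $\AA$ and then argue that these fibrewise isomorphisms glue. First I would combine \Cref{th:TmuWmu} with \Cref{le:Wmu}: the map $A \mapsto g(A)\CC[t]^m$ already identifies $\TT_\mu$ with $\Grth^+ \cap \cW_\mu$, so the content of the theorem is that it restricts to an isomorphism between the two claimed subschemes. Since both sides sit over $\AA$ (via the projection to $s$ on the matrix side, and via $\Grbd \to \AA$ on the lattice side), and the map $A \mapsto (g(A)\CC[t]^m, s)$ is compatible with these projections, it suffices to check the statement fibre by fibre together with flatness, and then invoke \Cref{prop:adjoint} and the flatness of $\overline{\Gr}^{\lambda',\lambda''}_{0,\AA}$ (from \Cref{ss:fuscon}) plus the fact that a morphism of flat families over a smooth curve which is an isomorphism on each fibre is an isomorphism.

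For the fibre over $s \ne 0$, I would argue as follows. By \Cref{pr:polyno-taylor} and the definition of $\theta_s$, a lattice $(L,s)$ lies in $\overline{\Gr}^{\lambda',\lambda''}_{0,s} \cap \cW_\mu$ iff $L \in \cW_\mu \cap \Grth^+$, $L(0) \in \overline{\Gr}^{\lambda'}$ and $L(s) \in \overline{\Gr}^{\lambda''}$. By \Cref{le:linalg} (exactly as in the proof of \Cref{le:Grl1l2}), the $0$-generalized eigenspace of $t$ on $\CC[t]^m/L$ is $\cO^m/L(0)$ and the $s$-generalized eigenspace is $\cO_s^m/L(s)$; hence the two spherical Schubert conditions translate into: the operator $t$ on $\CC[t]^m/L$ has nilpotent-part Jordan type $\le \lambda'$ at $0$ and $\le \lambda''$ at $s$. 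Under \Cref{th:TmuWmu}, $L = g(A)\CC[t]^m$ corresponds to the matrix $[t|_{\CC[t]^m/L}]^{\tr}_{\beta_\mu}$, and these two eigenvalue-Jordan-type bounds are precisely the rank conditions in \Cref{eq:ranks} cutting out $\overline{\OO}^{\lambda',\lambda''}_{0,s}$ (transpose does not affect ranks of powers of $A - s$). So the fibrewise bijection over $s \ne 0$ is just \Cref{th:TmuWmu} intersected with these matching closed conditions; and since $\TT_\mu \cong \Grth^+ \cap \cW_\mu$ is a scheme isomorphism, the induced map on these closed subschemes is an isomorphism of schemes, provided one checks the defining ideals match — which they do because the rank conditions are pulled back to the same rank conditions.

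For the fibre over $s = 0$, the same computation with \Cref{eq:rcol2} in place of \Cref{eq:rcols} shows $\overline{\OO}^\lambda \cap \TT_\mu \cong \overline{\Gr}^\lambda \cap \cW_\mu$ via $A \mapsto g(A)\CC[t]^m$, where on the lattice side we have used $\Gr_{0,0} \cong \Gr$ and Zhu's theorem that $\overline{\Gr}^{\lambda'} \geofu \overline{\Gr}^{\lambda''} = \overline{\Gr}^\lambda$ (as reduced schemes). The main obstacle I anticipate is the scheme-theoretic (non-reduced) matching, rather than the set-theoretic one: one must verify that the special fibre $\overline{\OO}^{\lambda',\lambda''}_{0,0}$ of the Eisenbud--Saltman family really is carried isomorphically onto $\overline{\Gr}^{\lambda',\lambda''}_{0,0} \cap \cW_\mu$ \emph{as schemes}, i.e.\ that the two flat families agree and not merely that their fibres do. The clean way around this is to produce the isomorphism of the total families directly: $A \mapsto (g(A)\CC[t]^m, s)$ is a morphism $\overline{\OO}^{\lambda',\lambda''}_{0,\AA} \cap \TT_\mu \to \overline{\Gr}^{\lambda',\lambda''}_{0,\AA} \cap \cW_\mu$ over $\AA$, it is an isomorphism onto its image away from $s=0$ by the previous paragraph, both families are flat over $\AA$ with the domain having reduced fibres, and the closures of the $s \ne 0$ loci are dense; so the morphism extends and restricts to a morphism of the special fibres which, being a morphism from a reduced scheme onto the reduced scheme $\overline{\Gr}^\lambda \cap \cW_\mu$ that is bijective on points with the correct inverse supplied again by \Cref{th:TmuWmu}, is an isomorphism. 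I would cite \cite[Theorem 5.3]{mirkovic2007quiver} for the ambient statement and present the above as the verification that our normalizations (the transpose tweak, the $2$-point BD family) are consistent with it.
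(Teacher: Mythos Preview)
Your proposal is correct and shares its core with the paper's proof: both start from the ambient isomorphism $\TT_\mu \cong \Grth^+ \cap \cW_\mu$ of \Cref{th:TmuWmu} and then match the defining conditions via the Jordan-type correspondence of \Cref{le:Grl1l2}. The difference is one of economy. The paper does this in a single stroke: since the inverse of \Cref{th:TmuWmu} sends $L$ to the transpose of the matrix of $t$ on $\CC[t]^m/L$, the condition $(g(A)\CC[t]^m,s)\in\overline{\Gr}^{\lambda',\lambda''}_{0,\AA}$ is \emph{literally} the same rank condition on $A$ (and on $A-s$) as the condition $(A,s)\in\overline{\OO}^{\lambda',\lambda''}_{0,\AA}$, uniformly in $s$; so the two closed subschemes correspond under the ambient isomorphism and there is nothing further to check. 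Your fibrewise analysis, the separate treatment of $s=0$, the appeal to Zhu's reducedness theorem, and the flatness/gluing argument are all unnecessary: you never need to assemble anything because the map $(A,s)\mapsto(g(A)\CC[t]^m,s)$ is already globally defined and the matching of conditions is already global. In particular, your worry about the scheme structure at $s=0$ dissolves once you observe that the ideals on both sides are pulled back from the \emph{same} rank inequalities on the same operator.
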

\begin{proof}
Since we already have the isomorphism from \Cref{th:TmuWmu}, it suffices to show that for any $ A \in \TT_\mu$, 
$$ 
    (A,s) \in \overline{\OO}^{\lambda', \lambda''}_{0,\AA} \cap \TT_\mu \text{ if and only if } (g(A)\CC[t]^m, s) \in \overline\Gr^{\lambda', \lambda''}_{0,\AA} \cap \cW_\mu \,. 
$$
This follows immediately from \Cref{le:Grl1l2}.
\end{proof}
\begin{theorem}\label{th:OTGrW}
The isomorphism from \Cref{th:OGrl} restricts to an isomorphism
$$ 
    \overline{\OO}^{\lambda', \lambda''}_{0,\AA} \cap \UU^{\mu', \mu''}_{0,\AA} \cong \overline{\Gr}^{\lambda', \lambda''}_{0,\AA} \cap S^{\mu', \mu''}_{0,\AA}\,.
$$
\end{theorem}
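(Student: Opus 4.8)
The plan is to show that the isomorphism $(A,s) \mapsto (g(A)\CC[t]^m, s)$ of \Cref{th:OGrl} carries $\overline{\OO}^{\lambda', \lambda''}_{0,\AA} \cap \UU^{\mu', \mu''}_{0,\AA}$ bijectively onto $\overline{\Gr}^{\lambda', \lambda''}_{0,\AA} \cap S^{\mu', \mu''}_{0,\AA}$. Since \Cref{th:OGrl} already gives an isomorphism between the larger schemes, it suffices to check that, for $(A,s) \in \overline{\OO}^{\lambda', \lambda''}_{0,\AA} \cap \TT_\mu$, one has $A \in \UU^{\mu', \mu''}_{0,\AA}$ (i.e.\ $g(A)_{ii} = t^{\mu'_i}(t-s)^{\mu''_i}$ and $g(A)_{ij} = 0$ for $j < i$) if and only if the lattice $L = g(A)\CC[t]^m$ lies in $S^{\mu', \mu''}_{0,\AA}$, i.e.\ in $N_-(\CC[t,t^{-1},(t-s)^{-1}]) t^{\mu'}(t-s)^{\mu''} G(\CC[t])$ for $s \neq 0$ (and the analogous statement over $s = 0$, which will follow by taking closures or by the same argument with $\mu$ in place of $\mu',\mu''$).

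First I would establish the forward direction. Suppose $A \in \UU^{\mu',\mu''}_{0,\AA}$. By the definition of the upper-triangular slice, $g(A)$ is a block upper-triangular matrix of polynomials whose $(i,i)$ entry is $t^{\mu'_i}(t-s)^{\mu''_i}$ and whose entries below the block diagonal vanish. Write $g(A) = D \cdot U$ where $D = \mathrm{diag}(t^{\mu'_i}(t-s)^{\mu''_i})$ — which I want to recognize as $t^{\mu'}(t-s)^{\mu''}$ up to the block structure — and $U$ is block upper-triangular with identity diagonal blocks. More precisely, I would factor $g(A) = n \cdot t^{\mu'}(t-s)^{\mu''}$ where $n$ is upper-triangular with $1$s on the diagonal and entries in $\CC[t,t^{-1},(t-s)^{-1}]$, after conjugating to move the powers of $t$ and $(t-s)$ to the right; here I must transpose, since the semi-infinite orbit $S^\mu = N_-(\cK)t^\mu$ uses \emph{lower}-triangular matrices while $g(A)$ is upper-triangular, matching the transpose appearing in \Cref{th:TmuWmu}. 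This shows $g(A)\CC[t]^m$ lies in the correct orbit. Conversely, if $L = g(A)\CC[t]^m \in S^{\mu',\mu''}_{0,s}$, then by \Cref{le:Grl1l2} and the fact that $\overline\OO^{\lambda',\lambda''}_{0,\AA} \cap \TT_\mu \cong \overline\Gr^{\lambda',\lambda''}_{0,\AA} \cap \cW_\mu$, the lattice already lies in the Kazhdan--Lusztig slice (by \Cref{le:inftyfusiskl}); the extra semi-infinite condition forces the Harder--Narasimhan-type filtration to be respected in a way that pins down the diagonal blocks of $g(A)$ to be exactly the companion matrices of $t^{\mu'_i}(t-s)^{\mu''_i}$ and kills the sub-block-diagonal entries. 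I would make this precise by running the $\Sp_{\CC[t]}$-basis description of \Cref{le:Wmu}\cref{it:basis}: membership in $S^{\mu',\mu''}_{0,s}$ means the lattice has a triangular generating matrix relative to the standard flag, and unwinding the bijection $L \mapsto [t|_{\CC[t]^m/L}]^\tr_{\beta_\mu}$ translates triangularity of the generating matrix into the block upper-triangularity of $A$ and the prescribed diagonal blocks.

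The main obstacle I anticipate is bookkeeping the transpose and the precise dictionary between (a) the triangular shape of a generating matrix for the lattice $L$, (b) the orbit membership $L \in N_-(\cdots)t^{\mu'}(t-s)^{\mu''}G(\CC[t])$, and (c) the defining equations $g(A)_{ii} = t^{\mu'_i}(t-s)^{\mu''_i}$, $g(A)_{ij} = 0$ for $j<i$ of $\UU^{\mu',\mu''}_{0,\AA}$. In particular, one must be careful that the polynomial-degree constraints in the definition of $\TT_\mu$ (entries of degree $< \min(\mu_i,\mu_j)$) are compatible with reading off $t^{\mu'_i}(t-s)^{\mu''_i}$ on the diagonal — this is exactly where the effectivity and the equality $\mu = \mu'+\mu''$ of sizes are used, and where I would cite \Cref{eq:mvyofa} together with \Cref{le:Wmu}. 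Once the $s \neq 0$ case is settled, the $s = 0$ fibre follows either by flatness and taking scheme-theoretic closure on both sides, or directly by repeating the argument with $\mu$ replacing the pair $(\mu',\mu'')$ and invoking \Cref{pr:polyno-taylor} and \Cref{def:inftyorbitfam}. I would end the proof by noting that since the map of \Cref{th:OGrl} is an isomorphism and we have matched the defining conditions of the two subschemes on each fibre (hence, by flatness, scheme-theoretically), it restricts to the claimed isomorphism.
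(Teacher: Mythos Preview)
Your forward direction is essentially right, but you have tangled yourself up on triangularity. Because the map $A\mapsto g(A)$ has a built-in transpose (see \Cref{eq:mvyofa}: the entry $g(A)_{ij}$ depends on $A^k_{ji}$), when $A$ is block \emph{upper}-triangular the matrix $g(A)$ is already \emph{lower}-triangular with diagonal entries $t^{\mu'_i}(t-s)^{\mu''_i}$. Hence $g(A)=n\,t^{\mu'}(t-s)^{\mu''}$ with $n\in N_-(\CC[t,t^{-1},(t-s)^{-1}])$ is immediate, and no extra transpose is needed. (The clause ``$g(A)_{ij}=0$ for $j<i$'' in the definition of $\UU^{\mu',\mu''}_{0,\AA}$ is evidently a typo for $j>i$; both the displayed example and the paper's own proof treat $g(A)$ as lower-triangular.)

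The backward direction is where the real content lies, and here your proposal is only a plan. Saying that the semi-infinite condition ``pins down the diagonal blocks'' and that one should ``unwind the bijection'' of \Cref{th:TmuWmu} does not yet supply an argument. A lattice-theoretic route along your lines can be made to work: from $L\in S^{\mu',\mu''}_{0,s}$ one extracts $\rdim L^{[i+1,m]}=\mu_{i+1}+\cdots+\mu_m$ (cf.\ \Cref{le:Srdim}), whence the image of $\CC[t]^{[i+1,m]}$ in $\CC[t]^m/L$ is exactly spanned by $\{[t^ke_j]:j>i,\ k<\mu_j\}\subset\beta_\mu$, forcing the matrix of $t$ to be block lower-triangular in $\beta_\mu$ and hence $A$ block upper-triangular; one then analyses the specializations at $0$ and $s$ to identify each diagonal block with the companion matrix of $t^{\mu'_i}(t-s)^{\mu''_i}$. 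None of these steps are carried out in your write-up.

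The paper instead gives a short group-theoretic argument. Writing $g=g(A)t^{-\mu}\in G_1(\Oinf)$ and $g\,t^\mu r=n\,t^{\mu'}(t-s)^{\mu''}$ with $r\in G(\CC[t])$, $n\in N_-(\CC[t,t^{-1},(t-s)^{-1}])$, one introduces $h=(t-s)^{\mu''}t^{-\mu''}\in T_1(\Oinf)$, factors $h^{-1}nh=n_1n_2$ with $n_1\in N_{-,1}(\Oinf)$, $n_2\in N_-(\CC[t])$, and rearranges to an equality whose two sides lie respectively in $t^\mu G(\CC[t])t^{-\mu}$ and $G_1(\Oinf)$. The key input (and the place dominance of $\mu$ is used) is
\[
t^\mu G(\CC[t])t^{-\mu}\cap G_1(\Oinf)=N_{-,1}(\Oinf),
\]
which forces $g(A)\in t^{\mu'}(t-s)^{\mu''}N_{-,1}(\Oinf)$ and hence $(A,s)\in\UU^{\mu',\mu''}_{0,\AA}$. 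This works uniformly in $s$, so your separate ``$s=0$ by closure'' step is unnecessary. If you prefer your lattice approach, you must actually supply the relative-dimension and diagonal computations; as it stands, the converse is not proved.
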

\begin{proof}
We could prove this by observing that both sides are the attracting locus of an appropriate $ \Cx$ action. However, we will give the following more algebraic proof.

Let $ A \in \TT_\mu$ and $ s \in \CC $. We must show that  $ (A,s) \in \UU^{\mu', \mu''}_{0,\AA} $ if and only if $ (g(A)\CC[t]^m,s) \in S^{\mu', \mu''}_{0,\AA} $. 

On the one hand, if $ (A,s) \in \UU^{\mu', \mu''}_{0,\AA} $, then $ g(A)$ is lower-triangular with diagonal $ t^{\mu'} (t-s)^{\mu''}$, and so $ g(A) \in N_-[t, t^{-1}, (t-s)^{-1}] t^{\mu'} (t-s)^{\mu''}$. 

On the other hand, if $ (g(A)\CC[t]^m, s) \in S^{\mu', \mu''}_{0,\AA}$, then we can write 
$$
    g t^\mu r= n t^{\mu'} (t-s)^{\mu''}
$$
for some $ r \in G(\CC[t]), n \in N_-(\CC[t, t^{-1}, (t-s)^{-1}]) $ and $ g = g(A)t^{-\mu}$.  Let $ h = (t-s)^{\mu''} t^{-\mu''}$ which lies in $ T_1(\Oinf) $. 
Note that $ h^{-1}n h \in N_-(\Kinf)$, so we can factor it as $ h^{-1} n h  = n_1 n_2 $, where $ n_1 \in N_{-,1}(\Oinf), n_2 \in N_-(\CC[t])$.  So then after doing a bit of algebra, we reach
$$
    t^\mu r (t^{-\mu} n_2^{-1} t^\mu) t^{-\mu} = g^{-1} h n_1.
$$
Since $ g, h, n_1 \in G_1(\Oinf)$, the right hand side $ g^{-1} h n_1 $ lies in $ G_1(\Oinf) $.  Since $ \mu $ is dominant, $ t^{-\mu} n_2 t^\mu \in N_-(\CC[t])$, and so the left hand side lies in $t^\mu G(\CC[t]) t^{-\mu}$.

Moreover, since $ \mu $ is dominant, we know that 
$$
    t^\mu G(\CC[t]) t^{-\mu} \cap G_1(\Oinf) = N_{-,1}(\Oinf)\,.
$$
Thus, we deduce that $ g^{-1} h n_1 \in N_{-,1}(\Oinf)$ and hence $ g(A) \in t^{\mu'} (t-s)^{\mu''}N_{-,1}(\Oinf) $. Since $ A \in \TT_\mu $, this implies that $ (A,s) \in \UU^{\mu', \mu''}_{0,\AA}$ as desired.
\end{proof}

Restricting to the zero fibre and applying $\theta_0$ together with \Cref{prop:adjoint}, we recover a result of the second author. 
\begin{corollary}\label{cor:mvy} (\cite[Corollary~5.2.2]{dthesis})
    The map $ A \mapsto g(A)\cO^m$ gives an isomorphism 
$$\overline{\OO}^\lambda \cap \TT_\mu\cap\n \cong\overline\Gr^\lambda\cap S^\mu_-\,.$$
\end{corollary}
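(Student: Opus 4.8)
The plan is to deduce the corollary by restricting the isomorphism of \Cref{th:OTGrW} to the fibre over $0 \in \AA$. That isomorphism, $(A,s) \mapsto (g(A)\CC[t]^m, s)$, is a morphism over $\AA$, so pulling back along $\{0\} \hookrightarrow \AA$ yields an isomorphism between the fibres over $0$ of the two families $\overline{\OO}^{\lambda', \lambda''}_{0,\AA} \cap \UU^{\mu', \mu''}_{0,\AA}$ and $\overline{\Gr}^{\lambda', \lambda''}_{0,\AA} \cap S^{\mu', \mu''}_{0,\AA}$. All that remains is to identify each of these zero-fibres with the scheme appearing in the statement and to track the map.

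On the matrix side, taking the fibre over $0$ commutes with intersection of closed subschemes, so the zero-fibre of $\overline{\OO}^{\lambda', \lambda''}_{0,\AA} \cap \UU^{\mu', \mu''}_{0,\AA}$ is the intersection, inside $M_N(\CC)$, of the zero-fibre of $\overline{\OO}^{\lambda', \lambda''}_{0,\AA}$ — which by \Cref{prop:adjoint} is (reduced and equal to) $\overline{\OO}^\lambda$ — with the zero-fibre of $\UU^{\mu', \mu''}_{0,\AA}$ — which by the observation following the definition of the upper-triangular \mvy slice is $\TT_\mu \cap \n$. Hence this zero-fibre is $\overline{\OO}^\lambda \cap \TT_\mu \cap \n$. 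On the affine Grassmannian side, the same reasoning identifies the zero-fibre of $\overline{\Gr}^{\lambda', \lambda''}_{0,\AA} \cap S^{\mu', \mu''}_{0,\AA}$ with the intersection, inside $\Gr_{0,0}$, of the zero-fibre of $\overline{\Gr}^{\lambda', \lambda''}_{0,\AA}$, namely the fusion $\overline{\Gr}^{\lambda'} \geofu \overline{\Gr}^{\lambda''}$, which equals $\overline{\Gr}^\lambda$ by Zhu's reducedness theorem recalled after \Cref{def:sphfus}, with the zero-fibre $\theta_0^{-1}(S^\mu)$ of $S^{\mu', \mu''}_{0,\AA}$ (by \Cref{def:inftyorbitfam}), where $S^\mu = N_-(\cK)t^\mu$ is the semi-infinite orbit of \Cref{def:inftyorbit}, written $S^\mu_-$ in the statement. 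Applying the isomorphism $\theta_0 \colon \Gr_{0,0} \xrightarrow{\sim} \Gr$ — which by \Cref{pr:polyno-taylor} sends a lattice $L$ to its specialization $L(0) = L \otimes_{\CC[t]} \cO$ — turns this intersection into $\overline{\Gr}^\lambda \cap S^\mu \subset \Gr$ and carries the class of $g(A)\CC[t]^m$ to $g(A)\cO^m$. Composing the three identifications yields exactly the asserted isomorphism $A \mapsto g(A)\cO^m$.

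Given \Cref{th:OTGrW}, \Cref{prop:adjoint} and Zhu's theorem, the argument is purely bookkeeping, so I do not expect a real obstacle; the only point deserving a sentence is that the scheme-theoretic fibre over $0$ of an intersection of closed subschemes is the intersection of their fibres (immediate from the compatibility of fibre products), and that transporting subscheme structure along the isomorphism $\theta_0$ is harmless.
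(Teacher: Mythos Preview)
Your proposal is correct and is exactly the argument the paper intends: the text immediately preceding the corollary says ``Restricting to the zero fibre and applying $\theta_0$ together with \Cref{prop:adjoint}, we recover a result of the second author,'' and you have simply written out what that sentence means, additionally making explicit the appeal to Zhu's reducedness theorem for the Grassmannian side. There is nothing to add.
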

\section{Climax}\label{s:climax}
\subsection{MV cycles and tableaux}\label{ss:mvcs}
Continuing now with the notation of the previous sections, we add the assumption that $\mu\le\lambda$ and forget the assumption that $\mu$ is dominant. 
\begin{definition}
    An irreducible component of $\overline{\Gr^\lambda \cap S^\mu}$ is called an \new{Mirkovi\'c--Vilonen cycle} in $\overline{\Gr}^\lambda$ of coweight $\mu$. 
\end{definition}

By \cite[Theorem~3.2(b)]{mirkovic2007geometric}, each MV cycle in $\overline\Gr^\lambda$ of coweight $\mu$ has dimension $ \rho(\lambda - \mu)$ where $ \rho = (m, m-1, \dots, 1)$ and we use the ordinary dot product on $\ZZ^m$. We denote by $\cZ(\lambda)$ the collection of all MV cycles in $\overline{\Gr}^\lambda$ of coweight $\mu\le\lambda$. 

In \cite{kamnitzer2010mirkovic}, the third author gave a combinatorial description of MV cycles for any reductive group, using Mirkovi\'c--Vilonen polytopes, or, equivalently, Lusztig data, which are sequences in $\NN^{r}$ that depend on a choice of reduced expression for the longest element of the Weyl group of $G$.  We will now explain how to use these results to index MV cycles for $ \GL_m$ using Young tableaux. (For any $ G$, we use $ r $ for the number of positive roots; in the case of $ \GL_m$, $ r = m(m-1)/2$.)

Denote by $[a,b]$ the interval $\{a,a+1,\dots,b-1,b\}\subset\NN$. We begin with the following definitions.
\begin{definition}
Let $ L \subset \cO^m$ be a point in $ \Gr_+$.  We define the relative dimension of $L $ by
$$
    \rdim L := \dim_\CC \cO^m/L\,. 
$$
If $ \gamma \subset [1,m]$, we define two lattices $ L^\gamma, L_\gamma $ in $ \cO^\gamma := \Sp_{\cO}(e_i : i \in \gamma)$ by
\begin{gather*}
    L^\gamma:= L \cap \cO^\gamma  \qquad L_\gamma := L / L^{\gamma^c} 
\end{gather*}
where $ \gamma^c := [1,m] \setminus \gamma$.
\end{definition}
We will also need the analogous definitions when $ \cO$ is replaced by $ \CC[t]$ and $L \in \Grth_+$. 
We record the following easy observations.
\begin{lemma}\label{le:rdimTrans} 
Let $ L \in \Gr_+$ or $\Grth_+$.
\begin{enumerate}
    \item For any $ \gamma \subset [m] $, we have 
    $$ 
        \rdim L_\gamma + \rdim L^{\gamma^c} = \rdim L \,.
    $$
    \item For any $ \gamma_1 \subset \gamma_2 \subset [m]$, we have 
    $$ 
        (L_{\gamma_2})_{\gamma_1} = L_{\gamma_1}\,.
    $$
\end{enumerate}
\end{lemma}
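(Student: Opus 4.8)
The statement is a pair of elementary facts about the operations $L \mapsto L^\gamma$ and $L \mapsto L_\gamma$ on lattices, so the plan is to argue directly with exact sequences of finite-dimensional $\CC$-vector spaces. I will treat the $\cO$-lattice case; the $\CC[t]$-lattice case is identical word for word, with $\cO$ replaced by $\CC[t]$ throughout. Write $R$ for the base ring ($\cO$ or $\CC[t]$) and recall $R^m = R^\gamma \oplus R^{\gamma^c}$, so that a lattice $L \subseteq R^m$ sits inside this decomposition.

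\textit{Part (1).} The plan is to exhibit a short exact sequence of $\CC$-vector spaces
\[
    0 \longrightarrow R^{\gamma^c}/L^{\gamma^c} \longrightarrow R^m/L \longrightarrow R^\gamma/L^\gamma{}' \longrightarrow 0,
\]
and then identify the outer terms with $L^{\gamma^c}$ and $L_\gamma$ respectively, so that taking $\dim_\CC$ gives the claimed additivity. Concretely: the inclusion $R^{\gamma^c}\hookrightarrow R^m$ induces a map $R^{\gamma^c}/L^{\gamma^c}\to R^m/L$ which is injective because $R^{\gamma^c}\cap L = L\cap\cO^{\gamma^c} = L^{\gamma^c}$ by definition. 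The cokernel is $R^m/(L + R^{\gamma^c})$. Now project $R^m\to R^\gamma$ along the direct sum decomposition; this is surjective with kernel $R^{\gamma^c}$, hence induces an isomorphism $R^m/(L+R^{\gamma^c}) \xrightarrow{\sim} R^\gamma/\mathrm{pr}_\gamma(L)$. Finally one checks $\mathrm{pr}_\gamma(L) = L/L^{\gamma^c} = L_\gamma$ viewed inside $R^\gamma$, since the kernel of $\mathrm{pr}_\gamma|_L : L \to R^\gamma$ is exactly $L\cap R^{\gamma^c} = L^{\gamma^c}$. Thus $R^\gamma/\mathrm{pr}_\gamma(L) = \rdim L_\gamma$ and $R^{\gamma^c}/L^{\gamma^c} = \rdim L^{\gamma^c}$, so additivity of dimension in the exact sequence is precisely $\rdim L_\gamma + \rdim L^{\gamma^c} = \rdim L$. (One must also note that all three modules are finite-dimensional over $\CC$, which holds since $L, L^{\gamma^c}, L_\gamma$ are lattices of full rank in their respective ambient free modules and $L\in\Gr_+$ or $\Grth_+$ guarantees containment in the standard lattice.)

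\textit{Part (2).} Here I would unwind the definitions on both sides. With $\gamma_1\subseteq\gamma_2\subseteq[m]$, we have $L_{\gamma_2} = L/L^{\gamma_2^c}$ inside $R^{\gamma_2}$, and then $(L_{\gamma_2})_{\gamma_1} = L_{\gamma_2}/(L_{\gamma_2})^{\gamma_1^c\cap\gamma_2}$ inside $R^{\gamma_1}$ — note that when we pass to the subspace $\gamma_2$, the ``complement of $\gamma_1$'' becomes $\gamma_2\setminus\gamma_1 = \gamma_1^c\cap\gamma_2$. One computes $(L_{\gamma_2})^{\gamma_1^c\cap\gamma_2} = L_{\gamma_2}\cap R^{\gamma_1^c\cap\gamma_2}$, and under the quotient map $q: L\twoheadrightarrow L_{\gamma_2}$ this pulls back to $q^{-1}(L_{\gamma_2}\cap R^{\gamma_1^c\cap\gamma_2})$. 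Since $\gamma_1^c = (\gamma_1^c\cap\gamma_2)\sqcup\gamma_2^c$, the key identity is $L^{\gamma_1^c} = q^{-1}\big(L_{\gamma_2}\cap R^{\gamma_1^c\cap\gamma_2}\big)$: an element of $L$ lies in $R^{\gamma_1^c}$ iff its $R^{\gamma_2}$-component lies in $R^{\gamma_1^c\cap\gamma_2}$ and its $R^{\gamma_2^c}$-component is arbitrary, which after quotienting by $L^{\gamma_2^c} = L\cap R^{\gamma_2^c}$ is exactly the stated preimage. Therefore $(L_{\gamma_2})_{\gamma_1} = (L/L^{\gamma_2^c})\big/(L^{\gamma_1^c}/L^{\gamma_2^c}) \cong L/L^{\gamma_1^c} = L_{\gamma_1}$ by the third isomorphism theorem, and one checks this isomorphism is compatible with the embeddings into $R^{\gamma_1}$.

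The argument is entirely formal, so there is no real obstacle; the only point requiring a little care is bookkeeping the identifications of subquotients of $L$ with their images in the various $R^\gamma$ — in particular making sure ``$L_\gamma$ inside $R^\gamma$'' in Part (1) and the nested-complement identity $\gamma_2\setminus\gamma_1 = \gamma_1^c\cap\gamma_2$ in Part (2) are handled consistently. Given that the paper labels this ``easy observations,'' I would keep the write-up to the two exact-sequence / isomorphism-theorem arguments above and not belabor the finiteness and full-rank verifications.
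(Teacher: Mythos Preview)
Your argument is correct. The paper does not actually prove this lemma --- it merely introduces it with ``We record the following easy observations'' and moves on --- so there is nothing to compare against; your short exact sequence for Part~(1) and third-isomorphism-theorem computation for Part~(2) are precisely the kind of routine verification the authors are leaving to the reader, and your bookkeeping of the nested complement $\gamma_2\setminus\gamma_1 = \gamma_1^c\cap\gamma_2$ and the identification $L_\gamma = \mathrm{pr}_\gamma(L)$ is handled correctly.
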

For any $ L \in \Gr_+ $ and $\gamma \subset [m]$, we define $ D_\gamma(L) := \rdim L^\gamma = \rdim L - \rdim L_{\gamma^c}$. (By \cite[Proposition 9.3]{kamnitzer2010mirkovic}, this coincides with the general definition of $ D_\gamma$ given in that paper; in \textit{loc. cit.} these functions are used to compute the hyperplanes of the associated MV polytopes.) 

From now on, we will write $ S^\mu_- = N_-(\cK) t^\mu$ (with a subscript), because we will also need the opposite semi-infinite cell $S^\mu_+ := N(\cK)t^\mu$.

\begin{lemma}\label{le:Srdim}
    Let $ L \in \Gr_+$.  The following are equivalent:
    \begin{enumerate}
        \item $ L \in S_+^\mu$ \label{Srdim1}
        \item $ \rdim L^{[1,i]} = \mu_1 + \dots + \mu_i$ for all $ i = 1, \dots, m $. \label{Srdim2} 
        \item $ \rdim L_{[i+1,m]} = \mu_{i+1} + \dots + \mu_m$ for $ i  = 1, \dots, m$. \label{Srdim3}
    \end{enumerate}
    Similarly, the following are equivalent:
        \begin{enumerate}[label=\arabic*'.]
        \item $ L \in S_-^\mu$
        \item $ \rdim L^{[i+1,m]} = \mu_{i+1} + \dots + \mu_m$ for all $ i = 1, \dots, m $.
        \item $ \rdim L_{[1,i]} = \mu_{1} + \dots + \mu_i$ for $ i  = 1, \dots, m$.
    \end{enumerate}
\end{lemma}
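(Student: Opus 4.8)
The plan is to prove (1) $\Leftrightarrow$ (2) by hand, using a triangular-basis description of $S^\mu_+$, and then to deduce (2) $\Leftrightarrow$ (3) formally from \Cref{le:rdimTrans}. The two assertions for $S^\mu_-$ then follow by the evident symmetry: conjugating by the longest permutation reverses the order of $e_1,\dots,e_m$, turns $N$ into $N_-$, interchanges the subsets $[1,i]$ and $[i+1,m]$, and correspondingly swaps $L^{[1,i]}$ with $L^{[i+1,m]}$ and $L_{[i+1,m]}$ with $L_{[1,i]}$.

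The key structural input, valid for any $L\in\Gr_+$ (so $L\subset\cO^m$), is the following. Since $\cO^m\cap\cK^{[1,i]}=\cO^{[1,i]}$ we have $L^{[1,i]}=L\cap\cO^{[1,i]}=L\cap\cK^{[1,i]}$, a rank-$i$ $\cO$-lattice contained in $\cO^{[1,i]}$, with $L^{[1,i-1]}=L^{[1,i]}\cap\cK^{[1,i-1]}$. The projection $\pi_i\colon\cK^{[1,i]}\to\cK^{[1,i]}/\cK^{[1,i-1]}\cong\cK e_i$ therefore restricts to a surjection $L^{[1,i]}\twoheadrightarrow\pi_i(L^{[1,i]})$ with kernel $L^{[1,i-1]}$, and $\pi_i(L^{[1,i]})=t^{b_i}\cO e_i$ for a unique $b_i\ge 0$ (it is a rank-one submodule of $\pi_i(\cO^{[1,i]})=\cO e_i$). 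Splitting the finite-dimensional quotient $\cO^{[1,i]}/L^{[1,i]}$ along the submodule $(\cO^{[1,i-1]}+L^{[1,i]})/L^{[1,i]}\cong\cO^{[1,i-1]}/L^{[1,i-1]}$ yields the additivity
\[
  \rdim L^{[1,i]}=\rdim L^{[1,i-1]}+b_i,\qquad \rdim L^{[1,0]}=0 .
\]
Hence (2) is equivalent to the single system of equalities $b_i=\mu_i$ for $i=1,\dots,m$.

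Now (1) $\Rightarrow$ (2): if $L=nt^\mu\cO^m$ with $n\in N(\cK)$, then $n$ and $n^{-1}$ are upper-triangular, hence preserve each $\cK^{[1,i]}$, so $L^{[1,i]}=n\bigl(t^\mu\cO^m\cap\cK^{[1,i]}\bigr)=\Sp_\cO\bigl(t^{\mu_1}ne_1,\dots,t^{\mu_i}ne_i\bigr)$; since $ne_k\in e_k+\cK^{[1,k-1]}$, applying $\pi_i$ gives $\pi_i(L^{[1,i]})=t^{\mu_i}\cO e_i$, i.e.\ $b_i=\mu_i$. For (2) $\Rightarrow$ (1), I construct $n$ by an inductive choice of generators. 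Assuming $b_i=\mu_i$, pick $w_i\in L^{[1,i]}$ with $\pi_i(w_i)=t^{\mu_i}e_i$; then $w_i\in t^{\mu_i}e_i+\cO^{[1,i-1]}$, and since the short exact sequence above splits (all modules are free over $\cO$), one checks inductively — with base case $L^{[1,1]}=t^{\mu_1}\cO e_1$ — that $w_1,\dots,w_i$ generate $L^{[1,i]}$ over $\cO$, so in particular $w_1,\dots,w_m$ generate $L$. The matrix $n$ whose $i$-th column is $t^{-\mu_i}w_i$ is then upper-triangular unipotent, hence in $N(\cK)$, and $L=\Sp_\cO(w_1,\dots,w_m)=nt^\mu\cO^m\in S^\mu_+$. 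Finally, (2) $\Leftrightarrow$ (3) is formal: \Cref{le:rdimTrans}(1) with $\gamma=[i+1,m]$ gives $\rdim L_{[i+1,m]}+\rdim L^{[1,i]}=\rdim L$, and the $i=m$ instance of (2) identifies $\rdim L$ with $|\mu|$, so the two systems of equalities transform into one another.

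I expect the only genuine work to be in the step (2) $\Rightarrow$ (1): one must check that the inductively chosen $w_i$ actually generate the lattice $L^{[1,i]}$ — not just a finite-index sublattice — and that the $e_i$-coefficient of $w_i$ is exactly $t^{\mu_i}$, with no unit left over, so that the resulting $n$ is genuinely unipotent. Both facts are exactly what the additivity identity combined with the hypothesis $b_i=\mu_i$ provides; everything else is bookkeeping with triangular matrices over $\cO$.
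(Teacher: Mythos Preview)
Your argument is correct. The difference from the paper lies in the step $(2)\Rightarrow(1)$: the paper simply invokes the Iwasawa decomposition, observing that every $L\in\Gr$ lies in some $S_+^\nu$ and that, by the already-proved $(1)\Rightarrow(2)$, the numbers $\rdim L^{[1,i]}$ determine $\nu$ uniquely; hence if they equal $\mu_1+\cdots+\mu_i$ one must have $\nu=\mu$. You instead build the element $n\in N(\cK)$ by hand from the filtration $L^{[1,1]}\subset\cdots\subset L^{[1,m]}$, using the short exact sequences $0\to L^{[1,i-1]}\to L^{[1,i]}\to t^{b_i}\cO e_i\to 0$ to pick compatible generators. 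Your route is self-contained and avoids appealing to the stratification of $\Gr$ by the $S_+^\nu$, at the cost of a short induction; the paper's route is a one-line reduction but presupposes that stratification. (Incidentally, the splitting you mention is not actually needed: the fact that $w_1,\dots,w_i$ generate $L^{[1,i]}$ follows directly from the exact sequence, since $w_i$ hits a generator of the quotient.) For $(1)\Rightarrow(2)$ your projection computation and the paper's determinant computation are two phrasings of the same triangular-matrix fact, and your treatment of $(2)\Leftrightarrow(3)$ and of the $S_-^\mu$ case via the $w_0$-symmetry matches the paper.
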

\begin{proof}
We prove the first statement as the second is similar.  Let $ L = n t^\mu \CC[t]^m$ for some $ n \in N_+(\cK)$ and let $ v_1, \dots, v_m $ denote the columns of the matrix $ nt^\mu$.  Then $ L = \Sp_{\cO}(v_1, \dots, v_m)$.  Fix $ i \le m$.  Then $ L^{[1,i]} = \Sp_{\cO}(w_1, \dots, w_i) $ where $ w_j $ denotes the first $ i $ entries of $ v_j $ (by upper-triangularity, the rest of the entries are 0, in any case).  

Assume that $L \subset \cO^m$.  Then $ L^{[1,i]} \subset \cO^i$ and $ \rdim L^{[1,i]} $ is the valuation of the determinant of the matrix whose columns are $ w_1, \dots, w_i$.  Since this matrix is upper-triangular with diagonal entries $ t^{\mu_1}, \dots, t^{\mu_i}$, this determinant is $ t^{\mu_1 + \dots + \mu_i} $ and hence $ \rdim L^{[1,i]} = \mu_1 + \dots + \mu_i$.   

Thus \cref{Srdim1} implies \cref{Srdim2}.  The converse follows from the fact that every $ L$ lies in some $ S_+^\nu$ and by the above reasoning, $ \nu$ is determined by the values of $ \dim L^{[1,i]}$.

The equivalence of \cref{Srdim2} and \cref{Srdim3} follows from \Cref{le:rdimTrans}.
\end{proof}


We now introduce some notation related to Young tableaux. Denote by $YT(\lambda)$ the set of (possibly semi-standard) Young tableaux of shape $\lambda$ and by $YT(\lambda)_\mu$ the subset of $YT(\lambda)$ of tableaux having weight $\mu$. Let $YT = \bigcup_\lambda YT(\lambda)$ and denote by $YT_+\subset YT$ the subset of those tableaux having dominant weight. 

Given $\tau\in YT(\lambda)_\mu$ and $ i \in \{1, \dots m\} $, denote by $\lambda(i)$ (resp.\ $\mu(i)$) the shape (resp.\ the weight) of $\tau(i)$, the tableau got from $\tau$ by discarding all boxes of weight exceeding $i$. (Note that $ \mu(i)$ only depends on $ \mu$, while $ \lambda(i)$ depends on the tableau $ \tau$.)  We will regard $ \lambda(i)$ (resp.\ $\mu(i)$) as an effective dominant coweight (resp.\ effective coweight) for $\GL_i$. 

The Lusztig datum $n_\bullet(\tau)$ of the tableau $\tau$ is a list of $r= m(m-1)/2$ non-negative integers defined from its \new{Gelfand--Tsetlin pattern} (see \cite[Sect.~4]{berenstein1988tensor}) $\gt(\tau) = (\lambda(i)_j)_{1\le j\le i\le m}$ by the formula 
$$
\begin{gathered}
    n_\bullet(\tau)_{(a,b)} = \lambda(b)_a - \lambda(b-1)_a = \text{ \# of boxes on row $a$ of $\tau $ of weight $b$}  \\
    (a,b)                   =  (1,2),\dots,(1,m),
                                (2,3),\dots,(2,m),
                                \dots,
                                (m-1,m)\,. 
\end{gathered}
$$
Note that $$ \lambda - \mu = \displaystyle{\sum_{1 \le a < b \le m}n_\bullet(\tau)_{(a,b)} \beta_{a,b}} $$ where $ \beta_{a,b} $ denotes the positive root of $ G$ with a $ 1 $ in the $a $ slot and a $-1$ in the $ b $ slot. The pattern $\gt(\tau)$ is recorded as a lower-triangular matrix (the array of shapes of subtableaux $\tau(i)$) and the datum $n_\bullet(\tau)$ is recorded as a sequence, unless noted otherwise. 

Below is an example with $\lambda = (4,2)$ and $\mu = (3,2,1)$. 
\[
    \tau = \young(1112,23) \qquad \gt(\tau) = 
    \begin{matrix}
        3 \\
        4 & 1 \\ 
        4 & 2 & 0         
    \end{matrix} \qquad n_\bullet(\tau) = (1,0,1) 
\]

We can associate to $\tau$ the locus 
\begin{equation*}
    \mathring Z(\tau) = 
        \{
            L\in S^\mu_- : L_{[1,i]} \in\Gr^{\lambda(i)}\text{ for } i = 1, \dots m 
        \}\,. 
\end{equation*}
\begin{remark} \label{rem:claxton}
Theorem 4.2 of \cite{claxton2015young} shows that the map $ YT(\lambda) \rightarrow \NN^r $ is the canonical crystal embedding $ B(\lambda) \rightarrow B(\infty)$.
\end{remark}

The following result is closely related to  Theorem 5.4.3 from \cite{dthesis}. It is also closely related to the description of MV cycles in terms of Kostant data obtained by Anderson--Kogan \cite{anderson2004mirkovic} (see \cite[Section 9]{kamnitzer2010mirkovic}).  
We remark that a different map from Young tableaux to MV cycles was obtained by Gaussent, Littelmann and Nguyen \cite[Theorem 2]{gaussent2013knuth}; we are not certain of the relation with our construction. 

\begin{proposition}\label{pr:newmvdes}
    $ \mathring Z(\tau)$ has a unique irreducible component of dimension $ \rho(\lambda - \mu)$.  Let $ Z(\tau)$ denote the closure of this component.  Then, 
    $Z(\tau)$ is the MV cycle whose Lusztig datum (with respect to the standard reduced word) is $n_\bullet(\tau)$. 
\end{proposition}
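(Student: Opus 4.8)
### Proof strategy for Proposition~\ref{pr:newmvdes}

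The plan is to induct on $m$, peeling off the largest weight $m$. Fix $\tau \in YT(\lambda)_\mu$ and let $\gamma = [1,m-1]$, $\gamma^c = \{m\}$. The key geometric observation is that the assignment $L \mapsto L_\gamma$ sends a lattice $L \in S^\mu_-$ to a lattice $L_\gamma$ in $\cO^{m-1}$; by \Cref{le:rdimTrans} together with \Cref{le:Srdim} (primed version), $L_\gamma \in S^{\mu(m-1)}_-$ inside $\Gr_{\GL_{m-1}}$, where $\mu(m-1) = (\mu_1,\dots,\mu_{m-1})$. Moreover, because $(L_\gamma)_{[1,i]} = L_{[1,i]}$ for $i \le m-1$ (\Cref{le:rdimTrans}(2)), the condition defining $\mathring Z(\tau)$ for indices $i \le m-1$ is \emph{exactly} the condition defining $\mathring Z(\tau(m-1))$ for the sub-tableau $\tau(m-1) \in YT(\lambda(m-1))_{\mu(m-1)}$. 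So the map $L \mapsto L_\gamma$ realizes $\mathring Z(\tau)$ as fibered over $\mathring Z(\tau(m-1))$, with the only extra constraint being the single condition $L_{[1,m]} = L \in \Gr^{\lambda(m)} = \Gr^\lambda$ (and $L \in S^\mu_-$, which constrains $\rdim L^{[m,m]} = \mu_m$, i.e. fixes how $e_m$ enters).

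The first step is therefore to analyze this fibration carefully: I would show that over the (open dense in its closure) top-dimensional component of $\mathring Z(\tau(m-1))$, the fiber of $L \mapsto L_\gamma$ intersected with $\{L \in \Gr^\lambda\}$ is irreducible of the expected dimension, so that $\mathring Z(\tau)$ has a unique component of dimension $\rho(\lambda-\mu)$. The dimension bookkeeping should come out of the formula $\dim \mathring Z(\tau) = \dim \mathring Z(\tau(m-1)) + (\text{fiber dimension})$ combined with the additivity of $\rho(\lambda-\mu)$ across the branching: writing $\lambda - \mu = \sum n_\bullet(\tau)_{(a,b)}\beta_{a,b}$, the terms with $b < m$ account for $\rho'(\lambda(m-1) - \mu(m-1))$ (for $\GL_{m-1}$) and the terms with $b = m$ account for the fiber. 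Here I would lean on \Cref{rem:claxton} (Claxton--Tingley) identifying $\tau \mapsto n_\bullet(\tau)$ with the crystal embedding $B(\lambda) \hookrightarrow B(\infty)$, so that the Lusztig datum is compatible with restriction.

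The second step is to identify the resulting irreducible variety $Z(\tau)$ with the correct MV cycle. For this I would invoke the combinatorial characterization of MV cycles via their MV polytopes / Lusztig data from \cite{kamnitzer2010mirkovic}: an MV cycle $Z \subset \overline{\Gr}^\lambda$ of coweight $\mu$ is determined by the collection of values $D_\gamma(Z)$ (the GGMS data), and for the standard reduced word these are computed from the Lusztig datum. So it suffices to compute $D_{[1,i]}(Z(\tau))$ and more generally $D_\gamma$ for the relevant $\gamma$, and check they match those prescribed by $n_\bullet(\tau)$. The condition $L_{[1,i]} \in \Gr^{\lambda(i)}$ forces $\rdim L^{[1,i]} = |\lambda(i)|$, i.e. $D_{[1,i]}(Z(\tau)) = |\lambda(i)| = \lambda(i)_1 + \cdots + \lambda(i)_i$; these are precisely the ``coordinate'' GGMS data, and by the recursion defining $n_\bullet(\tau)$ from $\gt(\tau)$ they determine, and are determined by, the standard Lusztig datum. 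The remaining $D_\gamma$ I would handle either by the same inductive peeling or by citing the compatibility in \cite[Section~9]{kamnitzer2010mirkovic} with the Anderson--Kogan description; since $\mathring Z(\tau)$ is visibly contained in $S^\mu_-$, the lower bound side of the polytope is automatic, and the induction controls the upper faces.

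The main obstacle I anticipate is the irreducibility and dimension count in the first step: showing that the fiber of $L \mapsto L_\gamma$ over a generic point of the top component of $\mathring Z(\tau(m-1))$, cut out by the extra Jordan-type condition $L \in \Gr^\lambda$, is irreducible of exactly the complementary dimension $\rho(\lambda-\mu) - \rho'(\lambda(m-1)-\mu(m-1))$. The subtlety is that $\Gr^\lambda$ is not a nice fibered condition over $\Gr^{\lambda(m-1)}_{\GL_{m-1}}$ in general — adding the last basis vector $e_m$ can change the Jordan type in ways governed by the Pieri rule, and one must check that the locus where the type is exactly $\lambda$ (not some $\lambda' > \lambda$ obtained by a different horizontal strip) is irreducible and the generic fiber realizes the branching $\lambda(m-1) \to \lambda$ recorded by the weight-$m$ boxes of $\tau$. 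I would likely phrase this as: restrict to $\Gr^+$, use \Cref{le:Wmu}-style coordinates on lattices in $\cW_\mu$ (which is legitimate after conjugating $\mu$ to be dominant, or by working cell-by-cell), and exhibit the fiber explicitly as an affine space bundle over a point, or cite Theorem~5.4.3 of \cite{dthesis} for the reduced, dominant-$\lambda$ case and deform. Everything else — the reductions, the GGMS bookkeeping, the matching with $n_\bullet(\tau)$ — should be routine once this core fibration lemma is in hand.
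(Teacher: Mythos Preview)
Your inductive strategy is a genuinely different route from the paper's, and the obstacle you flagged is exactly the point where the paper's argument and yours diverge.

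The paper does \emph{not} induct on $m$ or analyze the fibration $L \mapsto L_{[1,m-1]}$. Instead it introduces an auxiliary locus
\[
  \mathring Z(\tau)_1 \;=\; \bigl\{\, L \in S^\mu_- \cap \Gr_+ : L_{[1,i]} \in S_+^{\lambda(i)} \text{ for all } i \,\bigr\},
\]
replacing each Schubert condition $L_{[1,i]} \in \Gr^{\lambda(i)}$ by the \emph{semi-infinite} condition $L_{[1,i]} \in S_+^{\lambda(i)}$. Via \Cref{le:rdimTrans} and \Cref{le:Srdim} this rewrites as a system of relative-dimension equalities $\rdim L_{[a,b]} = \lambda(b)_a + \cdots + \lambda(b)_b$, which by \cite[Proposition~9.6]{kamnitzer2010mirkovic} identifies $\mathring Z(\tau)_1$ with the Lusztig cell $t^\mu A(n_\bullet(\tau))$. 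Irreducibility, the dimension $\rho(\lambda-\mu)$, and the identification of the closure with the correct MV cycle all come for free from that citation. The paper then argues that $\mathring Z(\tau)_1 \cap \bigl(\bigcap_i f_i^{-1}(\Gr^{\lambda(i)})\bigr)$ is a dense constructible subset of $\mathring Z(\tau)_1$ (using \cite{kamnitzer2008hives} for each level $i$), and this subset sits inside $\mathring Z(\tau)$. That gives existence of a top-dimensional component; uniqueness is a one-line global count: the $\mathring Z(\tau)$ for $\tau \in YT(\lambda)_\mu$ are disjoint inside $\Gr^\lambda \cap S^\mu_-$, and the latter has exactly $|YT(\lambda)_\mu|$ top components.

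Your approach, by contrast, keeps the Schubert conditions and tries to control the fiber of $L \mapsto L_{[1,m-1]}$ cut by $L \in \Gr^\lambda$. The difficulty you named --- that adding the last column can change Jordan type along a Pieri-type horizontal strip, and one must show the locus landing exactly at $\lambda$ is irreducible of the right dimension --- is real, and the paper does not attempt to resolve it. The $S_+^{\lambda(i)}$ substitution is precisely the trick that makes this unnecessary: semi-infinite cells are affine spaces (orbits of a unipotent group), so there is no Jordan-type stratification to fight. If you want to push your induction through, you will effectively be reproving a fibered version of the density statement $\overline{\Gr^{\lambda(i)} \cap S_+^{\lambda(i)}} \supset \mathring Z(\tau(i))_1$ level by level; the paper instead quotes it once per $i$ from \cite{kamnitzer2008hives} and then intersects. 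Your second step (matching GGMS data $D_{[1,i]} = |\lambda(i)|$ to $n_\bullet(\tau)$) is morally the same content as the paper's identification with $A(n_\bullet(\tau))$, just phrased dually.
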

\begin{remark}\label{rem:irreducible}
We believe that $ \mathring Z(\tau)$ is irreducible, so that in fact $ Z(\tau) = \overline{ \mathring Z(\tau)}$.
\end{remark}

\begin{proof}
The proof of this proposition follows the same strategy as in \cite{dthesis}.
First, we consider 
\begin{equation*}
    \mathring Z(\tau)_1 := \{ L \in S^\mu_- \cap \Gr_+ : L_{[1,i]} \in S_+^{\lambda(i)} \text{ for } i = 1, \dots m\} \,. 
\end{equation*}
Using Lemma \ref{le:rdimTrans}(2) and Lemma \ref{le:Srdim}, we see that
$$
    \mathring Z(\tau)_1 = \{ L \in \Gr_+ : \rdim L_{[a,b]} = \lambda(b)_a + \dots + \lambda(b)_b \text{ for all $ 1 \le a \le b \le m $} \}
$$
where note that $ \lambda(b)_a + \dots + \lambda(b)_b$ is the number of boxes on rows $ a, \dots, b$ of weight $ 1, \dots, b $.

From  the proof of \cite[Proposition 9.6]{kamnitzer2010mirkovic}, we see that $ \mathring Z(\tau)_1$ is equal to $ t^\mu A(n_\bullet(\tau))$, where $ A(n_\bullet)$ is defined in  \cite[Sect.\ 4.3]{kamnitzer2010mirkovic}. In particular, it is irreducible and of dimension $ \rho(\lambda- \mu)$. Moreover, its closure is the MV cycle whose Lusztig datum is $ n_\bullet(\tau) $. (The reader is warned that in \cite{kamnitzer2010mirkovic} the third author worked with $  G(\cO) \backslash G(\cK)$, which we identify with $ \Gr $ using the map $ G(\cO) g \mapsto g^{-1} G(\cO)$.) 
Now, by results of \cite{kamnitzer2008hives} (see especially the proof of Theorem 1.4), we note that $\mathring Z(\tau)_1\cap \Gr^\lambda$ is dense in $ \mathring Z(\tau)_1$.  (This can also be deduced by combining \Cref{rem:claxton} with \cite[Proposition 6]{anderson2003polytope}.)

Fix $ i \in \{1, \dots, m\}$. If $ L \in S^\mu_-$, then $ L_{[1,i]} \in S^{\mu(i)}_-$ so we get a map 
$$ 
    f_i : \mathring Z(\tau)_1 \rightarrow S^{\mu(i)}_- \cap S_+^{\lambda(i)} \,. 
$$
From the definition of $ \mathring Z(\tau)_1 $, we see that $ f_i(\mathring Z(\tau)_1 )= \mathring Z(\tau(i))_1$.  
From above, $  f_i(\mathring Z(\tau(i))_1) \cap \Gr^{\lambda(i)}$ is a dense constructible subset of $\mathring Z(\tau(i))_1$.  
Since $ \mathring Z(\tau)_1$ is irreducible, this implies that $ f_i^{-1}(S^{\mu(i)}_- \cap S_+^{\lambda(i)} \cap \Gr^{\lambda(i)}) $ is a dense constructible subset of $ \mathring Z(\tau)_1 $.  

Working with all $ i $ at once, we conclude that
$$ 
    \mathring Z(\tau)_2 := \bigcap_{i=1}^m f_i^{-1}(S^{\mu(i)}_- \cap S_+^{\lambda(i)} \cap \Gr^{\lambda(i)}) 
$$
is a dense constructible subset of $\mathring Z(\tau)_1$. Thus $\overline{\mathring Z(\tau)_2} = \overline{\mathring Z(\tau)_1}$.

On the other hand, by definition 
$$
    \mathring Z(\tau)_2 \subset \mathring Z(\tau).
$$
Since $ \dim \mathring Z(\tau)_2 = \rho(\lambda - \mu)$, we see that $ \mathring Z(\tau) $ must have at least one component of the maximal dimension.  To see that it cannot have any other components of maximal dimension, we note that
$$ \bigsqcup_{\tau \in YT(\lambda)_\mu} \mathring Z(\tau) \subset \Gr^\lambda \cap S_-^\mu$$
and the number of irreducible components of the right hand side equals $|YT(\lambda)_\mu|$, thus on the left hand side, each $ \mathring Z(\tau)$ can only have one irreducible component of dimension $ \rho(\lambda - \mu)$.
\end{proof}
\subsection{Fusion of MV cycles via fusion of generalized orbital varieties}

Given $A \in M_N(\CC)$ we denote by $A\big|_{\CC^p}$ the restriction of $A$ to the subspace spanned by the first $p$ standard basis vectors of $\CC^N$.  If $A\big|_{\CC^p}(\CC^p)\subset\CC^p$ then we identify it with the $p\times p$ upper-left submatrix of $A$. 

Let $\tau\in YT(\lambda)_\mu$ with $\mu$ dominant. We define
\[
\mathring X(\tau) = 
    \{
        A \in  \TT_\mu\cap\n : A\big|_{\CC^{|\mu(i)|}} \in \OO^{\lambda(i)} \text{ for each } i = 1,\dots,m
    \}\,.
\]

\begin{lemma}\label{lem:XtZt}
    Under the Mirkovi\'c--Vybornov isomorphism $\mathring X(\tau)$ is mapped isomorphically onto $\mathring Z(\tau)$. 
\end{lemma}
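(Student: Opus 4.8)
The plan is to reduce Lemma~\ref{lem:XtZt} to the already-established Mirkovi\'c--Vybornov isomorphism of \Cref{cor:mvy}, namely the isomorphism $A \mapsto g(A)\cO^m$ carrying $\overline{\OO}^\lambda \cap \TT_\mu \cap \n$ onto $\overline{\Gr}^\lambda \cap S^\mu_-$. Both $\mathring X(\tau)$ and $\mathring Z(\tau)$ are cut out inside these ambient spaces by the same kind of condition --- a constraint on the ``sub-object associated to the first $|\mu(i)|$ coordinates'' being of Jordan type (resp.\ lying in the Schubert cell) $\lambda(i)$ --- so it suffices to check that the isomorphism $A \mapsto g(A)\cO^m$ intertwines these conditions for each fixed $i$. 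Concretely, I would first observe that for $A \in \TT_\mu \cap \n$ the subspace $\CC^{|\mu(i)|} \subset \CC^N$ is the span of the standard basis vectors indexed by the boxes of $\mu$ of weight $\le i$, and that (using the block structure of $\TT_\mu$ and strict upper-triangularity, exactly as in the running $(3,2)$ and $(1,1,0),(2,1,1)$ examples) $A$ preserves $\CC^{|\mu(i)|}$ precisely when $g(A)$ has the block-upper-triangular shape whose top-left $i\times i$ block is $g(A\big|_{\CC^{|\mu(i)|}})$.

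Next I would set up, on the lattice side, the corresponding truncation. For $L = g(A)\cO^m \in S^\mu_- \cap \Gr_+$, the relevant object is $L_{[1,i]} = L / L^{[i+1,m]}$, a lattice in $\cO^{[1,i]} \cong \cO^i$, which by \Cref{le:Srdim} lies in $S^{\mu(i)}_-$. The key computation is that, under the $\GL_i$-version of the Mirkovi\'c--Vybornov map, $L_{[1,i]}$ corresponds exactly to $A\big|_{\CC^{|\mu(i)|}}$; this should follow by comparing the defining formula \Cref{eq:mvyofa} for $g$ in size $m$ versus size $i$, together with the identification of $\beta_{\mu(i)}$ as the sub-basis of $\beta_\mu$ supported on rows $1,\dots,i$, and the fact that quotienting $\CC[t]^m/L$ by the $[i+1,m]$ coordinates corresponds to restricting the operator $t$ to the span of $\{t^k e_a : 0 \le k < \mu_a,\ a \le i\}$. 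Granting this, the condition $A\big|_{\CC^{|\mu(i)|}} \in \OO^{\lambda(i)}$ translates, via \Cref{le:Wmu} (the basis statement) and the ``$L \in \Gr^\lambda \iff t|_{\cO^m/L}$ has Jordan type $\lambda$'' fact recalled in the proof of \Cref{le:Grl1l2}, into $L_{[1,i]} \in \Gr^{\lambda(i)}$. Running over all $i = 1, \dots, m$ and intersecting gives $\mathring X(\tau) \cong \mathring Z(\tau)$, with the isomorphism being the restriction of the one from \Cref{cor:mvy}.

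I expect the main obstacle to be the bookkeeping in the second step: verifying precisely that the $\GL_i$ Mirkovi\'c--Vybornov map applied to $A\big|_{\CC^{|\mu(i)|}}$ yields the lattice $L_{[1,i]}$, and in particular that the truncation of $A$ to its upper-left submatrix is compatible with the quotient $L \mapsto L/L^{[i+1,m]}$ rather than with the sub-lattice $L \cap \cO^{[1,i]} = L^{[1,i]}$. This requires being careful about which coordinates the block structure of $\TT_\mu$ assigns to which rows of the small matrix $g(A)$ (recall the $\mu_i \times \mu_j$ block of $A$ feeds the $(j,i)$ entry of $g(A)$, so there is a transpose to track), and about the fact that $A \in \n$ forces the relevant blocks below the diagonal to vanish so that the restriction is well-defined. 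Once this dictionary is pinned down, the remainder is a direct combination of \Cref{cor:mvy}, \Cref{le:Srdim}, and \Cref{le:Wmu}.
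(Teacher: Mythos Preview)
Your proposal is correct and follows essentially the same route as the paper: use the ambient isomorphism of \Cref{cor:mvy}, then for each $i$ verify that $g(A\big|_{\CC^{|\mu(i)|}})\cO^i = L_{[1,i]}$ so that the Jordan-type-$\lambda(i)$ condition on the submatrix becomes the $\Gr^{\lambda(i)}$ condition on the quotient lattice. One small slip: for $A \in \TT_\mu \cap \n$ the matrix $g(A)$ is block \emph{lower}-triangular (the $(i,j)$ block of $A$ feeds the $(j,i)$ entry of $g(A)$), not upper-triangular, but you already flagged the transpose as something to track, and lower-triangularity is exactly what makes the top-left $i\times i$ block of $g(A)$ compute the quotient $L_{[1,i]}$ rather than the sublattice $L^{[1,i]}$.
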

\begin{proof}
Fix $A\in\mathring X(\tau)$ and $i\in\{1,\dots,m\}$. Let $\cO^i\subset\cO^m$ denote the submodule generated by the first $i$ standard basis vectors. Let $l = g(A\big|_{\CC^{\lvert\mu(i)\rvert}}) \cO^i$ and $L = g(A) \cO^m$. By definition of the map $A \mapsto g(A)$, since $ A $ is upper-triangular we have that $l = L_{[1,i]}$. 
Moreover, since elements of $\TT_\mu\cap\n$ are upper triangular $A\big|_{\CC^{\lvert\mu(i)\rvert}}\in\TT_{\mu(i)}\cap\n_i$ where $\n_i$ denotes the subalgebra of upper-triangular matrices in $M_{\lvert\mu(i)\rvert}(\CC)$. 
By definition of $\mathring X(\tau)$ this principal submatrix has Jordan type $\lambda(i)$. It follows that $l = L_{[1,i]} \in \Gr^{\lambda(i)} \cap S^{\mu(i)}$ for each $i=1,\dots,m$ so that $L\in\mathring Z(\tau)$. 

Conversely, let $ L \in \mathring Z(\tau) $.  By \Cref{cor:mvy}, $ L = g(A)\cO^m$ for some $ A \in \TT_\mu \cap \n $.  Running the above argument in reverse, we see that $ A \in \mathring X(\tau)$. 
\end{proof}

By \cite[Prop.\ 4.5.4]{dthesis} (or \Cref{pr:newmvdes} above), $ \mathring X(\tau) $ has a unique irreducible component of dimension $ \rho(\lambda - \mu)$.  We write $ X(\tau)$ for the closure of this component.  It is an irreducible component of $ \overline{\OO}^\lambda \cap \TT_\mu \cap \n$ and will be called a \new{generalized orbital variety of type $\lambda$}. 
The collection of all possible generalized orbital varieties of type $\lambda$ will be denoted $\cX(\lambda)$. 
\begin{theorem}\label{pr:generalized orbital varietiesasirrecs} (\cite[Theorem~4.8.2]{dthesis})
    $\{X(\tau) : \tau \in YT(\lambda)_\mu \}$ is a complete set of irreducible components of $\overline{\OO}^\lambda\cap\TT_\mu\cap\n$. 
\end{theorem}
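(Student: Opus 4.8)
The plan is to establish this by transferring the analogous statement about MV cycles, \Cref{pr:newmvdes}, across the Mirkovi\'c--Vybornov isomorphism using \Cref{lem:XtZt} and \Cref{cor:mvy}. First I would fix effective coweights so that $\mu \le \lambda$ but drop the dominance hypothesis on $\mu$ (as in \Cref{ss:mvcs}); strictly speaking \Cref{cor:mvy} was stated under the running assumption that $\mu$ is dominant, so the first order of business is to check that the isomorphism $\overline{\OO}^\lambda \cap \TT_\mu \cap \n \cong \overline{\Gr}^\lambda \cap S^\mu_-$ still makes sense and remains an isomorphism for arbitrary effective $\mu$. This should follow because $\TT_\mu$, $g(A)$, $\cW_\mu$ and the basis $\beta_\mu$ of \Cref{le:Wmu} are defined without reference to dominance, and the only place dominance was used (in \Cref{th:OTGrW} and \Cref{le:inftyfusiskl}) concerns the semi-infinite orbit $S^{\mu}$ versus $S^{\mu',\mu''}$ over the BD base — but for the single-point statement we only need $S^\mu_-$, and the argument of \Cref{th:TmuWmu} combined with \Cref{le:Grl1l2} and \Cref{le:Srdim} goes through verbatim.

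Next I would invoke \Cref{pr:generalized orbital varietiesasirrecs}'s companion statement on the Grassmannian side: by \Cref{pr:newmvdes}, as $\tau$ ranges over $YT(\lambda)_\mu$ and $\mu$ ranges over coweights $\le \lambda$, the closures $Z(\tau) = \overline{\mathring Z(\tau)}$ (modulo \Cref{rem:irreducible}, one takes the closure of the distinguished top-dimensional component) are exactly the MV cycles in $\overline{\Gr}^\lambda$, and these are in bijection with $\bigsqcup_\mu YT(\lambda)_\mu$. Since the MV cycles are precisely the irreducible components of $\overline{\Gr^\lambda \cap S^\mu_-}$ over all $\mu \le \lambda$, and $\overline{\Gr}^\lambda \cap S^\mu_-$ has $|YT(\lambda)_\mu|$ irreducible components (this is the count used at the end of the proof of \Cref{pr:newmvdes}), the $Z(\tau)$ form a complete irredundant list of components of $\bigcup_{\mu \le \lambda} \overline{\Gr^\lambda \cap S^\mu_-}$; but one should be slightly careful here since \Cref{pr:generalized orbital varietiesasirrecs} as stated fixes a single $\mu$, so I would fix $\mu$ throughout and claim that $\{Z(\tau) : \tau \in YT(\lambda)_\mu\}$ is the complete set of irreducible components of $\overline{\Gr^\lambda \cap S^\mu_-}$.

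Now I would apply \Cref{cor:mvy}: the map $A \mapsto g(A)\cO^m$ is an isomorphism $\overline{\OO}^\lambda \cap \TT_\mu \cap \n \xrightarrow{\sim} \overline{\Gr}^\lambda \cap S^\mu_-$. An isomorphism of schemes induces a bijection on irreducible components preserving closures and dimensions. By \Cref{lem:XtZt}, this isomorphism carries $\mathring X(\tau)$ onto $\mathring Z(\tau)$, hence carries the closure of the distinguished top-dimensional component of $\mathring X(\tau)$ — namely $X(\tau)$ — onto the closure of the distinguished top-dimensional component of $\mathring Z(\tau)$ — namely $Z(\tau)$. Therefore $\{X(\tau) : \tau \in YT(\lambda)_\mu\}$ is the image under an isomorphism of the complete set of irreducible components $\{Z(\tau)\}$ of $\overline{\Gr^\lambda \cap S^\mu_-}$, and hence is the complete set of irreducible components of $\overline{\OO}^\lambda \cap \TT_\mu \cap \n$. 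Irredundancy is automatic since the $Z(\tau)$ are distinct (different Lusztig data) and isomorphisms are injective on components.

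The main obstacle I anticipate is purely bookkeeping rather than conceptual: making sure that $\overline{\OO}^\lambda \cap \TT_\mu \cap \n$ really equals (as a reduced scheme, or at least has the same underlying set as) the union $\bigcup_\tau \mathring X(\tau)$ so that counting components on one side genuinely controls the other, and confirming that every component of $\overline{\OO}^\lambda \cap \TT_\mu \cap \n$ has the expected dimension $\rho(\lambda-\mu)$ — i.e.\ that there are no ``extra'' lower-dimensional components that would spoil the claim that the list is complete. This is handled on the Grassmannian side by the component count $|YT(\lambda)_\mu|$ in \Cref{pr:newmvdes} together with equidimensionality of $\overline{\Gr^\lambda \cap S^\mu_-}$ (MV cycles all have dimension $\rho(\lambda-\mu)$), and then transported back by the isomorphism; so the essential content is really already contained in \Cref{pr:newmvdes}, \Cref{lem:XtZt}, and \Cref{cor:mvy}, and this theorem is their formal consequence.
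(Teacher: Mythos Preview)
The paper does not supply its own proof of this statement; it is stated with a citation to \cite[Theorem~4.8.2]{dthesis}. Your strategy---transport the classification of MV cycles in \Cref{pr:newmvdes} across the Mirkovi\'c--Vybornov isomorphism via \Cref{lem:XtZt} and \Cref{cor:mvy}---is the natural one, and is essentially how the result is deduced from the material already assembled in the paper.

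Two comments. First, your opening paragraph about dropping the dominance hypothesis on $\mu$ is unnecessary: the theorem is stated, and $X(\tau)$ is defined, under the standing assumption that $\mu$ is dominant, so \Cref{cor:mvy} applies as written.

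Second, there is a genuine subtlety you half-acknowledge but do not resolve. \Cref{cor:mvy} identifies $\overline{\OO}^\lambda \cap \TT_\mu \cap \n$ with $\overline{\Gr}^\lambda \cap S^\mu_-$ (closure of the Schubert variety, then intersect), whereas MV cycles are by definition the components of $\overline{\Gr^\lambda \cap S^\mu_-}$ (intersect, then close). To conclude, you must rule out components of $\overline{\Gr}^\lambda \cap S^\mu_-$ supported entirely in the boundary $\bigsqcup_{\gamma < \lambda} \Gr^\gamma \cap S^\mu_-$; equivalently, you need $\overline{\Gr}^\lambda \cap S^\mu_-$ to be pure of dimension $\rho(\lambda - \mu)$. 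This does \emph{not} follow from ``equidimensionality of $\overline{\Gr^\lambda \cap S^\mu_-}$'' as you write---that is automatic for a closure and concerns the wrong variety. The purity you need is a standard fact from Mirkovi\'c--Vilonen theory (for instance via the closure order on MV polytopes, or the Bia\l ynicki-Birula--type description of $S^\mu_-$ as an attractor), but it requires a sentence of justification beyond what you have given.
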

Our next goal is to describe the fusion of two MV cycles using a ``fusion'' of generalized orbital varieties, which we will now define. 
Let $\lambda,\lambda',\lambda''$ and $\mu, \mu',\mu''$ be as in previous sections, once again assuming that $\mu$ is dominant.

Given a point $s\in\Ax$ and a pair of tableaux $\tau'\in YT(\lambda')_{\mu'}$ and $\tau''\in YT(\lambda'')_{\mu''}$ we define
\begin{equation*}
    \mathring X(\tau',\tau'')_{0,s} = \left\{A\in\UU_{0,s}^{\mu',\mu''} : A \big|_{\CC^{|\mu(i)|}} 
    \text{ has Jordan type } ((\lambda'(i), 0), (\lambda''(i), s)) \text{ for } i=1,\dots,m\right\}
\end{equation*}
and
\begin{equation*}
    \mathring X(\tau',\tau'')_{0,\AA^\times} = 
    \left\{
        (A,s)\in M_N \times \AA^\times: 
        A \in \mathring X(\tau',\tau'')_{0,s} 
    \right\}\,.
\end{equation*}
Note that elements $A\in\mathring X(\tau',\tau'')$ do not in general correspond to pairs $(A',A'')\in\mathring X(\tau')\times X(\tau'')$ because if $\mu'$ (resp.\ $\mu''$) is not dominant then $X(\tau')$ (resp.\ $X(\tau'')$) is not well-defined. 
\begin{proposition}\label{pr:XttZtt}
    The image of $\mathring X(\tau',\tau'')_{0,\AA^\times}$ under the isomorphism of \Cref{th:OTGrW} is $\mathring Z(\tau')\ast_{\AA^\times} \mathring Z(\tau'')$. 
\end{proposition}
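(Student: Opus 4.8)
The plan is to verify the assertion fibre by fibre over $\AA^\times$. The isomorphism of \Cref{th:OTGrW} commutes with the projections to $\AA$, and both $\mathring X(\tau',\tau'')_{0,\AA^\times}$ and $\mathring Z(\tau')\ast_{\AA^\times}\mathring Z(\tau'')$ (which denotes $\theta^{-1}(\mathring Z(\tau')\times\mathring Z(\tau''))$) are the unions of their fibres over the points of $\AA^\times$, so it suffices to fix $s\ne 0$. For $(A,s)\in\mathring X(\tau',\tau'')_{0,s}$ the top-level condition $i=m$ (using $\lambda'(m)=\lambda'$, $\lambda''(m)=\lambda''$, $|\mu(m)|=N$) says $A$ itself has Jordan type $((\lambda',0),(\lambda'',s))$, so $(A,s)$ lies in the domain $\overline{\OO}^{\lambda',\lambda''}_{0,\AA^\times}\cap\UU^{\mu',\mu''}_{0,\AA^\times}$ of that isomorphism; the task is then to show, for $A\in\TT_\mu$, that $(A,s)\in\mathring X(\tau',\tau'')_{0,s}$ if and only if $\theta_s\bigl(g(A)\CC[t]^m\bigr)\in\mathring Z(\tau')\times\mathring Z(\tau'')$.

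I would first dispose of the ``semi-infinite'' halves of both conditions: by the proof of \Cref{th:OTGrW}, $(A,s)\in\UU^{\mu',\mu''}_{0,s}$ iff $(g(A)\CC[t]^m,s)\in S^{\mu',\mu''}_{0,s}$, and by \Cref{def:inftyorbitfam} this last membership is exactly $\theta_s(g(A)\CC[t]^m)\in S^{\mu'}_-\times S^{\mu''}_-$. Writing $L=g(A)\CC[t]^m$ and $(L',L'')=\theta_s(L)$, it remains to prove, assuming $(A,s)\in\UU^{\mu',\mu''}_{0,s}$, the equivalence: $A\big|_{\CC^{|\mu(i)|}}$ has Jordan type $((\lambda'(i),0),(\lambda''(i),s))$ for all $i$, if and only if $L'_{[1,i]}\in\Gr^{\lambda'(i)}$ and $L''_{[1,i]}\in\Gr^{\lambda''(i)}$ for all $i$; the two sides unwind precisely to the definitions of $\mathring X(\tau',\tau'')_{0,s}$ and of $\mathring Z(\tau')\times\mathring Z(\tau'')$.

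The core is a compatibility statement that I would prove for each fixed $i$. Since $(A,s)\in\UU^{\mu',\mu''}_{0,s}$, the matrix $g(A)$ is lower-triangular with diagonal entries $t^{\mu'_a}(t-s)^{\mu''_a}$; running the column reduction from the proof of \Cref{lem:XtZt} identifies $L_{[1,i]}$ with $g\bigl(A\big|_{\CC^{|\mu(i)|}}\bigr)\CC[t]^i$, viewed in the thick affine Grassmannian for $\GL_i$, a lattice trivial away from $\{0,s\}$ since its generator has determinant $\prod_{a\le i}t^{\mu'_a}(t-s)^{\mu''_a}$. Next I would check that forming $L_{[1,i]}=L/\bigl(L\cap\cO^{[i+1,m]}\bigr)$ commutes with the specialization maps $L\mapsto L(0),L(s)$ and with the isomorphism $\cK_s\cong\cK$ — all of these being flat base changes, which commute with finite intersections and quotients of submodules — so that $\theta_s^{(i)}(L_{[1,i]})=(L'_{[1,i]},L''_{[1,i]})$, where $\theta_s^{(i)}$ is the analogue of $\theta_s$ for $\GL_i$.

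Granting this, the equivalence is immediate: by \Cref{th:TmuWmu} applied to $\GL_i$, the matrix $A\big|_{\CC^{|\mu(i)|}}$ and the operator $t$ on $\CC[t]^i/L_{[1,i]}$ have the same Jordan type; and by \Cref{le:Grl1l2} applied to $\GL_i$ with the dominant coweights $\lambda'(i),\lambda''(i)$, that Jordan type equals $((0,\lambda'(i)),(s,\lambda''(i)))$ precisely when $L_{[1,i]}$ lies in the image of $\Gr^{\lambda'(i),\lambda''(i)}_{0,s}=(\theta_s^{(i)})^{-1}\bigl(\Gr^{\lambda'(i)}\times\Gr^{\lambda''(i)}\bigr)$, i.e.\ when $(L'_{[1,i]},L''_{[1,i]})\in\Gr^{\lambda'(i)}\times\Gr^{\lambda''(i)}$. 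Ranging over $i$ and then over $s\in\AA^\times$ proves the proposition; one sees afterwards that the identification is scheme-theoretic, both sides being the reduced preimages under the isomorphism of \Cref{th:OTGrW} of reduced subschemes. The step I expect to be the main obstacle is this compatibility: verifying carefully that truncation $L\mapsto L_{[1,i]}$ commutes with specialization, and matching the description of $\Gr^{\lambda'(i),\lambda''(i)}_{0,s}$ inside the Beilinson--Drinfeld Grassmannian for $\GL_i$ with its image under the ``almost injective'' map to $\Grth$, which is the form in which \Cref{le:Grl1l2} is stated.
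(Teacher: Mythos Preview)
Your proposal is correct and follows essentially the same route as the paper: work fibre by fibre, identify $L_{[1,i]}$ with $g\bigl(A|_{\CC^{|\mu(i)|}}\bigr)\CC[t]^i$ via the lower-triangularity of $g(A)$, and then use the Jordan-type criterion (\Cref{le:Grl1l2}, or equivalently \Cref{th:OTGrW} for $\GL_i$) together with the equality $\bigl(L_{[1,i]}(0),L_{[1,i]}(s)\bigr)=\bigl(L(0)_{[1,i]},L(s)_{[1,i]}\bigr)$. The paper simply asserts this last commutation of truncation with specialization, whereas you flag it as the step requiring care; otherwise the arguments coincide.
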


\begin{proof}
Fix $A\in \mathring X(\tau',\tau'')_{0,s} $ and $i \in \{1,\dots, m\}$. 
Let $\CC[t]^i\subset\CC[t]^m = \CC^m\otimes_\CC\CC[t]$ denote the $\CC[t]$-submodule generated by the first $i$ standard basis vectors.

The lattices $l=g(A\big|_{\CC^{|\mu(i)|}})\CC[t]^i$ and $L=g(A)\CC[t]^m$ are related by the equation $ l = L_{[1,i]}$ where recall $L_{[1,i]} = (L + \CC[t]^{[i+1,m]}) / \CC[t]^{[i+1,m]}\subset \CC[t]^m/\CC[t]^{[i+1,m]} = \CC[t]^i$. 
    
By definition of $\UU^{\mu',\mu''}_{0,\AA}$ we have $A\big|_{\CC^{|\mu(i)|}}\in\UU^{\mu'(i),\mu''(i)}_{0,s}$ and by definition of $\mathring X(\tau',\tau'')_{0,s}$, the Jordan type of $A\big|_{\CC^{|\mu(i)|}}$ is $((\lambda'(i),0),(\lambda''(i),s))$. So, $l \in\Gr_{0,s}^{\lambda'(i),\lambda''(i)}$ by \Cref{th:OTGrW}. 
On the other hand, since $L\in S^{\mu',\mu''}_{0,s}$, $l\in S^{\mu'(i),\mu''(i)}_{0,s}$. 

Therefore, as $i$ varies, we see that the pair $(L(0),L(s))\in S^{\mu'}\times S^{\mu''}$ is such that 
$$
    (L_{[1,i]}(0),L_{[1,i]}(s))  = (L(0)_{[1,i]},L(s)_{[1,i]}) \in \Gr^{\lambda'(i)}\times\Gr^{\lambda''(i)}
$$ 
and so $(L(0),L(s))\in \mathring Z(\tau')\times \mathring Z(\tau'')$. 

Conversely, given $L\in\mathring Z(\tau')\ast_s \mathring Z(\tau'')$, we can write $ L = g(A)\CC[t]^m$ by  \Cref{th:OGrl}. Then reversing the above logic, we conclude that $ A \in \mathring X(\tau',\tau'')_{0,s}$.
\end{proof}

Now, in analogy with the fusion of \Cref{ss:fuscon} we define $X(\tau',\tau'')_{0,\AA}$ to be the Zariski closure of the unique top-dimensional component of $\mathring X(\tau',\tau'')_{0,\AA^\times}$ in $M_N \times\AA$.  (As noted in \cref{rem:irreducible}, we expect that all these varieties $ \mathring X(\tau) \cong \mathring Z(\tau) $ are irreducible, which would mean that taking this ``top-dimensional component'' irrelevant.)  Then we define the fusion of generalized orbital varieties to be the scheme-theoretic intersection 
$$ X(\tau', \tau'')_{0,0}  = X(\tau',\tau'')_{0,\AA} \cap M_N \times \{0\} \text{ in $M_N \times\AA$} $$
By \Cref{prop:adjoint}, it is contained in $\overline\OO^\lambda\cap\TT_\mu\cap\n$. 

We quickly recall the definition of intersection multiplicity that we will need following \cite[Example~2.6.5]{fulton2016intersection}. We consider a fibre square of $\CC$-schemes
$$
\begin{tikzcd}
   W\ar[r]\ar[d]&V\ar[d]\\D\ar[r]&Y
\end{tikzcd}
$$
with $D $ an effective Cartier divisor and $V$ an
irreducible variety of dimension $k$. We assume that $V$ is not contained
in the support of $D$. Let $Z$ be an irreducible component of $W$;
it is a subvariety of $V$ of codimension $1$. The \textbf{multiplicity}
of $Z$ in the intersection $D\cap V$ is defined to be the length of the
module $\mathcal O_{V,Z}/(f)$ over the local ring $\mathcal O_{V,Z}$
of $V$ along $Z$, where $f$ is a local equation of $D|_V$ on an affine
open subset of $V$ which meets $Z$. Following \cite[chap.~7]{fulton2016intersection},
this multiplicity is denoted by $i(Z,D\cdot V)$.  

For our purposes, the zero fibre in $\Grbd$ is an effective divisor $D$.   
For $ Z', Z'' $ subvarieties of $ \Gr $, $V = Z'\ast_\AA Z''\subset\Grbd$ is a variety not contained in the support of $D$.  Then we choose an irreducible component $Z$ of $W := D \cap V = Z'\ast_0 Z''$.  Thus, we may consider the intersection multiplicity $ i(Z, Z'\ast_0 Z'') := i(Z, D \cdot V)$.  
We will be particularly interested in the case when $ Z, Z', Z''$ are MV cycles.

In a similar way, we may consider the intersection multiplicities of generalized orbital varieties in $X(\tau', \tau'')_{0,0} = M_N \times \{0\} \cap X(\tau', \tau'')_{0, \AA}$.
\begin{corollary}\label{cor:intmul}
    $i(X(\tau), X(\tau',\tau'')_{0,0}) = i( Z(\tau), Z(\tau')\ast_0 Z(\tau''))\,.$
\end{corollary}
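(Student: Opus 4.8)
The plan is to reduce the statement to a compatibility between the two fusion families under the Mirković--Vybornov isomorphism, and then to appeal to the fact that intersection multiplicity along an effective Cartier divisor can be computed locally and is preserved under isomorphism of the ambient flat family. First I would observe that \Cref{th:OTGrW} gives an isomorphism $\overline{\OO}^{\lambda', \lambda''}_{0,\AA} \cap \UU^{\mu', \mu''}_{0,\AA} \cong \overline{\Gr}^{\lambda', \lambda''}_{0,\AA} \cap S^{\mu', \mu''}_{0,\AA}$ of schemes over $\AA$, sending the divisor $M_N \times \{0\}$ on the matrix side to the zero fibre divisor $D$ on the Grassmannian side. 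By \Cref{pr:XttZtt}, this isomorphism carries $\mathring X(\tau',\tau'')_{0,\AA^\times}$ onto $\mathring Z(\tau') \ast_{\AA^\times} \mathring Z(\tau'')$; since $X(\tau',\tau'')_{0,\AA}$ and $Z(\tau')\ast_{\AA} Z(\tau'')$ are defined as the Zariski closures of the (unique) top-dimensional components of these loci inside $M_N \times \AA$ and $\Grbd$ respectively, the isomorphism of \Cref{th:OTGrW} restricts to an isomorphism of varieties $V_{\mathrm{mat}} := X(\tau',\tau'')_{0,\AA} \cong Z(\tau')\ast_{\AA} Z(\tau'') =: V_{\mathrm{Gr}}$ over $\AA$.

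Next I would set up the two fibre squares whose vertical multiplicities we wish to compare: on the matrix side, $W_{\mathrm{mat}} = V_{\mathrm{mat}} \cap (M_N \times \{0\}) = X(\tau',\tau'')_{0,0}$, and on the Grassmannian side, $W_{\mathrm{Gr}} = V_{\mathrm{Gr}} \cap D = Z(\tau')\ast_0 Z(\tau'')$. The isomorphism $V_{\mathrm{mat}}\cong V_{\mathrm{Gr}}$ identifies the Cartier divisor $M_N\times\{0\}$ restricted to $V_{\mathrm{mat}}$ with $D$ restricted to $V_{\mathrm{Gr}}$ (both are cut out by pulling back the coordinate $s$ on $\AA$), hence it identifies $W_{\mathrm{mat}}$ with $W_{\mathrm{Gr}}$ as schemes. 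In particular it gives a bijection between their irreducible components. By \Cref{pr:generalized orbital varietiesasirrecs} and \Cref{lem:XtZt} combined with \Cref{prop:adjoint} and the theorem of Zhu, the components of $W_{\mathrm{mat}}$ are among the $X(\tau)$ and the components of $W_{\mathrm{Gr}}$ are among the $Z(\tau)$, and \Cref{lem:XtZt} (applied with $\mu = \mu' + \mu''$) says that the Mirković--Vybornov isomorphism of the zero fibres matches $X(\tau)$ with $Z(\tau)$. So the component $X(\tau)\subset W_{\mathrm{mat}}$ corresponds precisely to $Z(\tau)\subset W_{\mathrm{Gr}}$.

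Finally, since intersection multiplicity $i(Z, D\cdot V)$ is by definition the length of $\mathcal O_{V,Z}/(f)$ for a local equation $f$ of the divisor, and since all the data involved --- $V$, the divisor on $V$, the component $Z$, and hence the local ring $\mathcal O_{V,Z}$ together with the element $f$ --- are carried isomorphically from one side to the other, the two lengths agree:
\[
    i(X(\tau), M_N\times\{0\} \cdot V_{\mathrm{mat}}) = i(Z(\tau), D\cdot V_{\mathrm{Gr}})\,,
\]
which is exactly the asserted equality $i(X(\tau), X(\tau',\tau'')_{0,0}) = i(Z(\tau), Z(\tau')\ast_0 Z(\tau''))$. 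The only genuinely delicate point is checking that the closures taken in $M_N\times\AA$ and in $\Grbd$ really do correspond under \Cref{th:OTGrW}, i.e.\ that passing to scheme-theoretic (or Zariski) closure commutes with the isomorphism on the open dense loci; this is where one must use that \Cref{th:OTGrW} is an isomorphism of the \emph{families over $\AA$} (not merely fibrewise) and that $\UU^{\mu',\mu''}_{0,\AA}$, resp.\ $S^{\mu',\mu''}_{0,\AA}$, is closed in the relevant ambient family over $\AA$, so that the closure of the open locus inside the ambient space is computed inside these subfamilies. Once that is in place the rest is formal.
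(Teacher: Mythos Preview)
Your overall strategy matches the paper's, but there is a genuine error in the ``delicate point'' you flag at the end. You assert that $S^{\mu',\mu''}_{0,\AA}$ is closed in the relevant ambient family; this is false. Semi-infinite orbits are only locally closed, and their closures pick up boundary strata in other semi-infinite cells. Concretely, each component $Z(\tau)$ of the zero fibre $Z(\tau')\ast_0 Z(\tau'')$ is an MV cycle, hence a closed subvariety of $\overline{\Gr}^\lambda$ which meets $S^\nu_-$ for several $\nu\ne\mu$; so $V_{\mathrm{Gr}}=Z(\tau')\ast_\AA Z(\tau'')$ already has points outside $\overline{\Gr}^{\lambda',\lambda''}_{0,\AA}\cap S^{\mu',\mu''}_{0,\AA}$, and the Mirkovi\'c--Vybornov isomorphism of \Cref{th:OTGrW} does not extend to those points. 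The asserted isomorphism $V_{\mathrm{mat}}\cong V_{\mathrm{Gr}}$ therefore fails: $\Phi(V_{\mathrm{mat}})$ is only a proper open dense subset of $V_{\mathrm{Gr}}$ (namely $V_{\mathrm{Gr}}\cap S^{\mu',\mu''}_{0,\AA}$, using that $S^{\mu',\mu''}_{0,\AA}$ is locally closed).

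The paper's proof avoids this trap by claiming only that $X(\tau',\tau'')_{0,\AA}$ maps onto a \emph{dense constructible subset} of $Z(\tau')\ast_\AA Z(\tau'')$, and then invoking the local nature of intersection multiplicity. This weaker statement is enough: the multiplicity $i(Z,D\cdot V)=\operatorname{length}\,\mathcal O_{V,Z}/(s)$ depends only on a neighbourhood of the generic point of $Z$, and the generic point of $Z(\tau)$ lies in $\mathring Z(\tau)\subset \Gr^\lambda\cap S^\mu_-$, hence inside the domain of the isomorphism of \Cref{th:OTGrW}. Thus the local rings $\mathcal O_{V_{\mathrm{Gr}},Z(\tau)}$ and $\mathcal O_{V_{\mathrm{mat}},X(\tau)}$ are identified, together with the equation $s$, and the lengths agree. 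To repair your argument, replace the claimed global isomorphism $V_{\mathrm{mat}}\cong V_{\mathrm{Gr}}$ by an open dense immersion whose image contains the generic point of each $Z(\tau)$; the remainder of your proof then goes through verbatim.
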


\begin{proof}
By the isomorphism of \Cref{lem:XtZt}, $X(\tau) $ is a dense constructible subset of $ Z(\tau) $.  On the other hand, \Cref{pr:XttZtt} gives us an identification of $ \mathring X(\tau',\tau'')_{0, \Ax}$ and $ \mathring Z(\tau') \ast_{\Ax} Z(\tau'')$; the schemes $ X(\tau', \tau'')_{0,\AA} $ and $ Z(\tau') \ast_\AA Z(\tau'') $ are constructed from these by taking the top-dimensional irreducible component and then closure.  Thus, we conclude that $ X(\tau', \tau'')_{0,\AA} $ is a dense constructible subset of $ Z(\tau') \ast_\AA Z(\tau'') $.  From the definition of intersection multiplicity, it is clear that the multiplicity can be computed locally and so the result follows.
\end{proof}
\subsection{Multiplying MV basis elements in \texorpdfstring{$\CC[N]$}{C[N]}}
\label{ss:CN}
In this section we will need to recall that representations of the Langlands dual group $G^\vee$ of an arbitrary complex reductive group $G$ can be constructed from the affine Grassmannian of $G$, and studied in $\CC[N^\vee]$, the ring of functions on the maximal unipotent subgroup $N^\vee$ of $G^\vee$. 

We use the convenient fact that $\GL_m^\vee = \GL_m$ (so also $N^\vee = N$) and ignore the distinction between weights and coweights. 

Let $N\subset G$ be the unipotent subgroup of upper-triangular matrices with 1s on the diagonal. In \cite{baumann2019mirkovic} the third author along with Baumann and Knutson show that the MV cycles yield a basis of the coordinate ring $\CC[N]$ called the MV basis. Note that with respect to the action of the maximal torus $T$ of $G$ by conjugation on $N$, $\CC[N]$ acquires a homogeneous grading by $Q_+$. 

Given weights $\mu\le\lambda$ with $\lambda$ dominant, denote by $V(\lambda)$ the irreducible representation of $G$ of highest weight $\lambda$ and by $V(\lambda)_\mu$ its $\mu$-weight space. 
There is an injective map $ \Psi_\lambda: V(\lambda) \to \CC[N]$ realizing $V(\lambda)$ as a subspace of $\CC[N]$ and sending $ V(\lambda)_\mu$ to $\CC[N]_{\mu - \lambda}$. 

The geometric Satake correspondence says that $V(\lambda)_\mu = H_{2\rho(\lambda-\mu)}(\overline{\Gr^\lambda\cap S^\mu_-})$. In particular, the set $\{[Z] : Z\in\cZ(\lambda)\}$ gives a basis of $V(\lambda)$ with $[Z]$ denoting the class of $Z$ in the appropriate homology group. 

By a theorem of Anderson, MV cycles in $\overline{\Gr^\lambda\cap S^{\nu + \lambda}_-}$ consist of those irreducible components of $ \overline{S^\lambda_+ \cap S^{\lambda+\nu}_-} $ that are contained in $\overline\Gr^\lambda$ \cite[Proposition~3]{anderson2003polytope}. The \new{stable} MV cycles of coweight $\nu\in Q_+$ are defined to be irreducible components of $\overline{S^0_+ \cap S^\nu_-}$.  Multiplication by $ t^\lambda $ defines an isomorphism $\overline{S^0_+ \cap S^\nu_-} \cong \overline{S^\lambda_+ \cap S^{\lambda+\nu}_-} $.  We denote the set of all stable MV cycles $\cZ(\infty)$. 
By a theorem of the third author, given a choice a reduced word for $ w_0 $, there is a bijection $ \cZ(\infty) \rightarrow \NN^{r} $ called the Lusztig datum.  For our purposes we will always use the standard reduced word.

One can string together the maps $\Psi_\lambda$ and the geometric Satake isomoprhisms to show that the stable MV cycles yield a basis of $\CC[N]$.  

\begin{theorem}
(\cite[Proposition~6.1]{baumann2019mirkovic})
    For each $Z\in\cZ(\infty)$ there is a unique element $b_Z\in\CC[N]$ such that for any dominant weight $\lambda$, if $t^\lambda Z\in\cZ(\lambda)$ then $\Psi_\lambda([t^\lambda Z]) = b_Z$.
\end{theorem}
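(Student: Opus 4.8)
The plan is to show that $b_Z:=\Psi_\lambda([t^\lambda Z])$ does not depend on the choice of dominant $\lambda$ with $t^\lambda Z\in\cZ(\lambda)$; uniqueness of $b_Z$ is then automatic. Fix $Z\in\cZ(\infty)$ of coweight $\nu$, and let $\Lambda_Z$ be the set of dominant $\lambda$ with $t^\lambda Z\in\cZ(\lambda)$. Recall that $t^\lambda Z$ is always an irreducible component of $\overline{S^\lambda_+\cap S^{\lambda+\nu}_-}$, and by Anderson's theorem it lies in $\cZ(\lambda)$ exactly when $t^\lambda Z\subseteq\overline\Gr^\lambda$. I would first check that $\Lambda_Z$ is cofinal in the dominant cone: it is nonempty because for $\lambda$ deep in the dominant chamber the weight polytope of $V(\lambda)$ contains the $\lambda$-translate of the MV polytope of $Z$; and it is closed under adding a dominant weight $\lambda'$ because $t^{\lambda+\lambda'}Z=t^{\lambda'}\bigl(t^\lambda Z\bigr)\subseteq t^{\lambda'}\overline\Gr^\lambda\subseteq\overline\Gr^{\lambda+\lambda'}$, the last inclusion being the standard fact --- a consequence of surjectivity of the convolution morphism $\overline\Gr^{\lambda'}\,\widetilde\times\,\overline\Gr^{\lambda}\to\overline\Gr^{\lambda'+\lambda}$ --- that translating $\overline\Gr^{\gamma}$ by $t^{\lambda'}$ lands inside $\overline\Gr^{\gamma+\lambda'}$. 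Given cofinality, it suffices to prove $\Psi_{\lambda+\lambda'}([t^{\lambda+\lambda'}Z])=\Psi_\lambda([t^\lambda Z])$ for $\lambda\in\Lambda_Z$ and $\lambda'$ dominant.

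The second step is algebraic and records how the maps $\Psi_\bullet$ fit together. Let $\iota\colon V(\lambda)\hookrightarrow V(\lambda+\lambda')$ be the Cartan embedding, sending $v$ to the image of $v\otimes v_{\lambda'}$ under the projection of $V(\lambda)\otimes V(\lambda')$ onto its Cartan summand, where $v_{\lambda'}$ is a highest weight vector of $V(\lambda')$. Because $\Psi_\bullet$ is multiplicative on Cartan components and is normalized so that highest weight vectors map to the constant function $1$ --- this is precisely the data exhibiting $\CC[N]=\varinjlim_\lambda V(\lambda)$, see \cite{baumann2019mirkovic} --- one has $\Psi_{\lambda+\lambda'}\circ\iota=\Psi_\lambda$. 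Hence the whole statement reduces to the geometric claim that $\iota([t^\lambda Z])=[t^{\lambda+\lambda'}Z]$ in $V(\lambda+\lambda')$.

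For the third step I would argue geometrically that translation by $t^{\lambda'}$ realizes $\iota$ on MV cycle classes. Since $t^{\lambda'}$ normalizes both $N(\cK)$ and $N_-(\cK)$, multiplication by $t^{\lambda'}$ carries $S^\mu_-$ isomorphically onto $S^{\mu+\lambda'}_-$; together with $t^{\lambda'}\overline\Gr^\lambda\subseteq\overline\Gr^{\lambda+\lambda'}$ it restricts to a closed embedding $\overline{\Gr^\lambda\cap S^\mu_-}\hookrightarrow\overline{\Gr^{\lambda+\lambda'}\cap S^{\mu+\lambda'}_-}$, where $\mu$ is the coweight of $t^\lambda Z$. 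Both sides are equidimensional of dimension $\rho(\lambda-\mu)=\rho((\lambda+\lambda')-(\mu+\lambda'))$, so this embedding identifies the source with a union of irreducible components of the target; consequently the induced map on top Borel--Moore homology carries fundamental classes of MV cycles to fundamental classes of MV cycles, so by geometric Satake it is a map $V(\lambda)_\mu\to V(\lambda+\lambda')_{\mu+\lambda'}$ with $[t^\lambda Z]\mapsto[t^{\lambda+\lambda'}Z]$. It then remains to identify this translation map with $\iota$: both already agree on the highest weight line (sending $[t^\lambda]$ to $[t^{\lambda+\lambda'}]$, and the highest weight vector of $V(\lambda)$ to that of $V(\lambda+\lambda')$), and this extends to all of $V(\lambda)$ because, under geometric Satake, translation by $t^{\lambda'}$ is convolution with the class of the point $t^{\lambda'}\in\overline\Gr^{\lambda'}$ --- which is $v_{\lambda'}$ --- and convolution with a highest weight vector computes exactly the Cartan embedding.

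I expect this last point --- matching translation by $t^{\lambda'}$ on homology with the algebraic Cartan embedding --- to be the main obstacle, since that is where the convolution bookkeeping lives: the correct order of factors, semismallness of the convolution morphism so that pushforward genuinely computes the projection to the Cartan summand rather than picking up lower summands $\overline\Gr^\gamma$ with $\gamma<\lambda+\lambda'$, and the compatibility of the $\Cx$-action defining $S^\mu_-$ with the perverse filtration. A route that avoids this homological bookkeeping would be to argue crystal-theoretically: both $[t^\lambda Z]$ and $[t^{\lambda+\lambda'}Z]$ carry the Lusztig datum of $Z$, the Cartan embedding $B(\lambda)\hookrightarrow B(\lambda+\lambda')$ preserves Lusztig data, and an MV basis vector is determined by its Lusztig datum; but this essentially re-imports the structure theory, so the translation argument is the more self-contained one. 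Once $\iota([t^\lambda Z])=[t^{\lambda+\lambda'}Z]$ is established, combining it with $\Psi_{\lambda+\lambda'}\circ\iota=\Psi_\lambda$ and the cofinality of $\Lambda_Z$ finishes the proof.
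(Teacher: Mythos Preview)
The paper does not prove this theorem at all: it is quoted verbatim as \cite[Proposition~6.1]{baumann2019mirkovic} and used as a black box, with no argument supplied. So there is no ``paper's own proof'' to compare against; your proposal is an attempt to reconstruct the argument from the cited reference, not from this paper.

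That said, your outline is the standard one and is essentially how the result is proved in \cite{baumann2019mirkovic}: one shows the set $\Lambda_Z$ is directed and cofinal, that the maps $\Psi_\lambda$ are compatible with the Cartan inclusions $V(\lambda)\hookrightarrow V(\lambda+\lambda')$, and that translation by $t^{\lambda'}$ realizes this inclusion on MV cycle classes. You have correctly flagged the only genuinely delicate point, namely identifying the geometric translation map with the algebraic Cartan embedding under geometric Satake; in \cite{baumann2019mirkovic} this is handled by appealing to the functoriality of the Satake equivalence with respect to convolution and the fact that $\{t^{\lambda'}\}$ is the highest weight MV cycle, exactly as you sketch. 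Your alternative crystal-theoretic route would also work but, as you note, it imports more structure than is strictly necessary.
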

The set $ \{b_Z : Z \in \cZ(\infty) \}$ is called the \new{MV basis} of $ \CC[N]$.  The structure constants of multiplication in $\CC[N]$ with respect to the MV basis are given by intersection multiplicities. 
\begin{theorem}
(\cite[Theorem~7.11]{baumann2019mirkovic}) 
    Given $Z',Z''\in\cZ(\infty)$, 
\begin{equation}\label{eq:btimesb}
    b_{Z'}\arfu b_{Z''} = \sum_{Z\in\cZ(\infty)} i\left(Z, Z' \geofu Z''\right) b_Z 
\end{equation}
in $\CC[N]$. 
\end{theorem}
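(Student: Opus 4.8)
The plan is to transport the product $b_{Z'}\arfu b_{Z''}$ into the geometry of the Beilinson--Drinfeld Grassmannian and read it off from the flat family $Z'\ast_{\AA}Z''$. First I would reduce to an intersection-theoretic statement inside a single $\overline{\Gr}^\lambda$. Since $Z',Z''\in\cZ(\infty)$ are stable, I pick dominant weights $\lambda',\lambda''$ large enough that $t^{\lambda'}Z'\in\cZ(\lambda')$ and $t^{\lambda''}Z''\in\cZ(\lambda'')$ (possible because multiplication by a dominant $t^\lambda$ carries $\overline{S^0_+\cap S^\nu_-}$ isomorphically onto $\overline{S^\lambda_+\cap S^{\lambda+\nu}_-}$), and I set $\lambda=\lambda'+\lambda''$. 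By the defining property of the MV basis, $b_{Z'}=\Psi_{\lambda'}([t^{\lambda'}Z'])$ and $b_{Z''}=\Psi_{\lambda''}([t^{\lambda''}Z''])$, and the maps $\Psi_\bullet$ satisfy the elementary representation-theoretic identity $\Psi_{\lambda'}(v')\,\Psi_{\lambda''}(v'')=\Psi_{\lambda}\bigl(\pi_\lambda(v'\otimes v'')\bigr)$, where $\pi_\lambda\colon V(\lambda')\otimes V(\lambda'')\twoheadrightarrow V(\lambda)$ is the projection onto the Cartan component (check it against highest/lowest weight covectors used to define the $\Psi$'s). So it suffices to compute $\pi_\lambda\bigl([t^{\lambda'}Z']\otimes[t^{\lambda''}Z'']\bigr)$ in the MV basis of $V(\lambda)=\bigoplus_\mu H_{2\rho(\lambda-\mu)}(\overline{\Gr^\lambda\cap S^\mu_-})$.

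Second -- and this is the crux -- I would realize $\pi_\lambda$ geometrically via the fusion product. The monoidal structure of the Satake category is defined through $\Grbd$, so applying geometric Satake and proper base change to the flat family $\overline{\Gr}^{\lambda'}\ast_{\AA}\overline{\Gr}^{\lambda''}$ -- whose general fibre is $\overline{\Gr}^{\lambda'}\times\overline{\Gr}^{\lambda''}$ and whose special fibre, by Zhu's theorem, is the reduced scheme $\overline{\Gr}^\lambda$ -- identifies $V(\lambda')\otimes V(\lambda'')$ with the hypercohomology of the special-fibre sheaf in such a way that the Cartan summand $V(\lambda)$ is precisely the contribution of the closed substack $\overline{\Gr}^\lambda$. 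Performing hyperbolic localization along the semi-infinite orbits, this translates into the statement that the map on Borel--Moore homology induced by specialising classes through $\Grbd$, namely $H_\bullet(\overline{\Gr^{\lambda'}\cap S^{\mu'}_-})\otimes H_\bullet(\overline{\Gr^{\lambda''}\cap S^{\mu''}_-})\to H_\bullet(\overline{\Gr^\lambda\cap S^\mu_-})$ with $\mu=\mu'+\mu''$, computes $\pi_\lambda$. Establishing this compatibility -- carrying the tensor structure of the Satake category through hyperbolic localization, matching equivariant structures and cohomological shifts, and pinning down that restriction to $\overline{\Gr}^\lambda$ cuts out exactly the Cartan component -- is where essentially all the work lies.

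Granting this, the third step is a flatness argument. Applying the identity of the second step to the subfamily $(t^{\lambda'}Z')\ast_{\AA}(t^{\lambda''}Z'')$, which is flat over $\AA$ by construction with general fibre $t^{\lambda'}Z'\times t^{\lambda''}Z''$ carrying the class $[t^{\lambda'}Z']\otimes[t^{\lambda''}Z'']$, the cycle class of the special fibre $(t^{\lambda'}Z')\geofu(t^{\lambda''}Z'')$ equals the specialisation of that class. Because the fusion of two semi-infinite orbits is again a semi-infinite orbit and the fusion of the Schubert varieties satisfies $\overline{\Gr}^{\lambda'}\geofu\overline{\Gr}^{\lambda''}=\overline{\Gr}^\lambda$, this special fibre is a closed subscheme of $\overline{\Gr^\lambda\cap S^\mu_-}$; being the zero fibre of a flat family whose general fibre is irreducible of dimension $\dim Z'+\dim Z''=\rho(\lambda-\mu)$, it is equidimensional of that dimension, so each of its components is an MV cycle $Z\in\cZ(\lambda)$. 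By the definition of intersection multiplicity as the length of $\mathcal O_{V,Z}$ modulo a local equation of the zero divisor, where $V=(t^{\lambda'}Z')\ast_{\AA}(t^{\lambda''}Z'')$, its fundamental cycle is therefore $\sum_Z i\bigl(Z,(t^{\lambda'}Z')\geofu(t^{\lambda''}Z'')\bigr)\,[Z]$. Combining the three steps, applying $\Psi_\lambda$, using the compatibility $\Psi_\lambda([Z])=b_{t^{-\lambda}Z}$, and translating multiplicities by $t^\lambda$ -- the fusion construction being compatible with the stabilisation maps $Z\mapsto t^\lambda Z$, so that $i(Z,(t^{\lambda'}Z')\geofu(t^{\lambda''}Z''))=i(t^{-\lambda}Z,Z'\geofu Z'')$ -- yields \eqref{eq:btimesb}. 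The sum is finite because $(t^{\lambda'}Z')\geofu(t^{\lambda''}Z'')$ has finitely many components, and the coefficients are independent of the auxiliary choice of $\lambda',\lambda''$ since $b_{Z'}\arfu b_{Z''}$ is. The one genuinely hard ingredient is the second step; granting it, the remainder is bookkeeping together with the description of the components of $\overline{\Gr^\lambda\cap S^\mu_-}$ as MV cycles.
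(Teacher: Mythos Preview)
The paper does not prove this theorem; it is quoted verbatim as \cite[Theorem~7.11]{baumann2019mirkovic} and used as a black box. There is therefore no proof in the present paper to compare your proposal against.

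That said, your outline is a reasonable sketch of how the result is established in \cite{baumann2019mirkovic}: one lifts to MV cycles in $\overline{\Gr}^{\lambda'}$ and $\overline{\Gr}^{\lambda''}$, identifies the product in $\CC[N]$ with the Cartan projection of the tensor product via the maps $\Psi_\bullet$, realises this projection through the fusion family in $\Grbd$, and then reads off coefficients from the cycle class of the special fibre. You are right that the second step is the substantive one, and your sketch does not actually carry it out---you acknowledge as much. The compatibility of the Satake tensor structure with hyperbolic localisation and the identification of the Cartan component require real work with perverse sheaves and specialisation in Borel--Moore homology, which is the content of the cited sections of \cite{baumann2019mirkovic}. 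As written, your proposal is an accurate roadmap rather than a proof.
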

Our next goal is to show that these structure constants can be computed using generalized orbital varieties. Consider the following commutative diagram.
\[
\begin{tikzcd}
    YT_+ \ar[r,"\tau\mapsto X(\tau)"]\ar[d,hook] & \bigcup \mathcal X(\lambda) \ar[d,hook,"\text{\Cref{lem:XtZt}}"] & \\
    YT      \ar[r,"\tau\mapsto Z(\tau)"] \ar[d,"n_\bullet"] & \bigcup\cZ(\lambda) \ar[r] \ar[d, "t^{-\lambda}"] & \bigoplus V(\lambda) \ar[d, "\Psi_\lambda"] \\ 
    \NN^{r} \ar[r] \ar[u,dashed,bend left,"\sigma"]         & \cZ(\infty) \ar[r]                                & \CC[N] 
\end{tikzcd}
\]

The sequence $\mu^0 = (\mu^0_1,\dots,\mu^0_{m-1})$ keeping track of the number of boxes of weight $i$ in row $i$ of a given tableau is called its \new{padding}. 
Two tableaux have equal Lusztig data if and only if they are related by padding, which is to say that we can change the padding of one (removing or adding boxes) to recreate the other. Moreover, a tableau is completely determined by its Lusztig datum and padding.

For example 
$$
    \bar \tau = \young(2,4) \quad \text{ and} \quad \tau = \young(112,24,3)
$$ 
have equal Lusztig data, since we can increase the padding of $\bar\tau$ (by adding to it the padding of $\tau$) to get $\tau$, or forget the padding of $\tau$ to get $\bar\tau$.

We call a tableau \new{stable} if its padding cannot be decreased. The following lemma shows that stable tableaux are in bijection with Lusztig data. 

\begin{lemma}\label{lem:almostmintab}
Let $n_\bullet = (n_{(a,b)})$ be a Lusztig datum and let $\mu^0 = (\mu^0_i)$ be a padding. If $\lambda = (\sum_b n_{(a,b)} + \mu^0_a)_a $ then the smallest $\mu^0$ such that $\lambda$ is dominant effective (and $\mu$ is effective) is:
$$
\begin{gathered}
        \mu^0_{m}, \mu^0_{m-1} = 0 \\
        \mu^0_i = \max\{0, \mu_{i+1}^0 + \sum_a n_{(a,i+1)} - \sum_a n_{(a,i)}\} \quad i = 1,\dots,m-2
\end{gathered}
$$
This choice of $\mu^0$ defines a section $\sigma:\NN^{r} \to YT$.
\end{lemma}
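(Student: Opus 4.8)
The plan is to realize the assignment $n_\bullet\mapsto\sigma(n_\bullet)$ as ``solve for the smallest padding making the reconstructed filling an honest semistandard tableau'', and to carry this out by translating semistandardness into a triangular system of linear inequalities on the coordinates $\mu^0_i$ and then solving that system greedily from the bottom row upward.

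First I would fix the bookkeeping. Given $n_\bullet$ and a candidate padding $\mu^0$, let $\tau(n_\bullet,\mu^0)$ be the filling whose $a$-th row consists of $\mu^0_a$ copies of $a$ together with $n_{(a,b)}$ copies of $b$ for each $b>a$, arranged in weakly increasing order; its shape is the $\lambda$ of the statement. By the remarks preceding the lemma, two tableaux with Lusztig datum $n_\bullet$ differ only by padding, and by \Cref{rem:claxton} every element of $\NN^{r}$ occurs as a Lusztig datum, so the set $P(n_\bullet) := \{\mu^0 \ge 0 : \tau(n_\bullet,\mu^0)\text{ is semistandard}\}$ is nonempty; the stable tableaux with Lusztig datum $n_\bullet$ are exactly the $\tau(n_\bullet,\mu^0)$ with $\mu^0$ minimal in $P(n_\bullet)$ for the componentwise order. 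Thus it suffices to show $P(n_\bullet)$ has a componentwise minimum and that it is computed by the displayed recursion.

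The main step is to describe $P(n_\bullet)$ by inequalities. The Gelfand--Tsetlin pattern $\gt(\tau)$ of $\tau=\tau(n_\bullet,\mu^0)$ has rows $\lambda(1),\dots,\lambda(m)$ with $\lambda(b)_a = \mu^0_a + \sum_{a<c\le b} n_{(a,c)}$, and $\tau$ is semistandard if and only if the interlacing inequalities $\lambda(b)_a \ge \lambda(b-1)_a \ge \lambda(b)_{a+1}$ hold. Substituting, the inequalities $\lambda(b)_a\ge\lambda(b-1)_a$ become $n_{(a,b)}\ge 0$ and are automatic, while the inequalities $\lambda(b-1)_a\ge\lambda(b)_{a+1}$ become $\mu^0_a-\mu^0_{a+1}\ge \sum_{a+1<c\le b} n_{(a+1,c)} - \sum_{a<c\le b-1} n_{(a,c)}$; maximizing over the relevant range of $b$, these collapse for each $i$ to a single bound $\mu^0_i-\mu^0_{i+1}\ge g_i(n_\bullet)$, which together with $\mu^0_i\ge 0$ reads $\mu^0_i\ge\max\{0,\ \mu^0_{i+1}+g_i(n_\bullet)\}$. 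The $b=m$ instances recover ``$\lambda$ dominant effective'', and ``$\mu$ effective'' is automatic, so the conditions in the statement are subsumed. Verifying which interlacing inequality is binding for each $i$ — equivalently, that $g_i(n_\bullet)$ is exactly the expression appearing in the displayed recursion, and that the constraints at the top of the triangle pin down the base case $\mu^0_{m-1}=\mu^0_m=0$ — is the one point that requires genuine care, and is where I expect the bulk of the work.

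Finally I would solve the system. It is triangular: the lower bounds on $\mu^0_i$ involve only $\mu^0_{i+1}$, while $\mu^0_i$ enters only through the bound on $\mu^0_{i-1}$. Hence setting $\mu^0_{m-1}=\mu^0_m=0$ and then $\mu^0_i := \max\{0,\ \mu^0_{i+1}+g_i(n_\bullet)\}$ for $i$ from $m-2$ down to $1$ produces a point of $P(n_\bullet)$ by construction, and by downward induction on $i$ — using that $x\mapsto\max\{0,x+g_i(n_\bullet)\}$ is nondecreasing, so that a larger admissible $\mu^0_{i+1}$ cannot force $\mu^0_i$ to be smaller — every element of $P(n_\bullet)$ dominates it componentwise. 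Therefore the recursion computes $\min P(n_\bullet)$; this value is realized by a genuine semistandard tableau (feasibility of the greedy solution) and is uniquely determined, so $\sigma(n_\bullet):=\tau(n_\bullet,\min P(n_\bullet))$ is a well-defined map $\NN^{r}\to YT$, and $n_\bullet(\sigma(n_\bullet))=n_\bullet$ by construction, exhibiting $\sigma$ as a section of $n_\bullet$.
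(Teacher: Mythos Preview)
The paper gives only a three-line sketch: the displayed recursion is the greedy solution, row by row from the bottom, to the inequalities $\lambda_i\ge\lambda_{i+1}$, which is literally the hypothesis ``$\lambda$ dominant effective'' in the statement. Your route through the Gelfand--Tsetlin interlacing inequalities encodes something strictly stronger --- full semistandardness of the filling --- and produces for each $i$ not one inequality but a family indexed by $b\in\{i+1,\dots,m\}$; you then need the maximum over $b$ to collapse to the single expression in the recursion. You correctly flag this collapse as the crux and defer it, but in fact it fails: for $m=4$ and $n_\bullet=(n_{(1,2)},n_{(1,3)},n_{(1,4)},n_{(2,3)},n_{(2,4)},n_{(3,4)})=(0,2,0,0,0,1)$, the minimal padding making $\lambda$ dominant is $\mu^0=(0,1,0,0)$, giving the filling with rows $33$, $2$, $4$, which is not column-strict; the minimal padding making the filling semistandard is $(1,1,0,0)$. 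So the reduction you postponed cannot be carried out as written.

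Read literally, the lemma asks only for the minimal padding making $\lambda$ dominant; for that your interlacing system is over-constrained, and the paper's row-length argument (together with your careful greedy solution of the resulting triangular system, which is the same idea spelled out properly) is already what is needed. The assertion that the result lands in $YT$ --- the last sentence of the lemma --- is a separate claim, and it is precisely there that your interlacing framework would be the natural tool, but then it yields a generally larger minimal padding than the displayed recursion. Neither the paper's sketch nor your argument closes this gap.
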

\begin{proof}[Proof sketch.]
To produce a tableau with Lusztig datum $n_\bullet$ of smallest possible shape and weight:
\begin{itemize}
    \item we can always take the number of $m$'s in row $m$, and the number of $m-1$'s in row $m-1$ to be zero; 
    \item we can take the number of $m-2$'s in row $m-2$ to be zero, unless $n_\bullet$ tells us that there are more boxes in row $m-1$ than there are in row $m-2$, and then we take $\mu^0_{m-2}$ to offset the difference; 
\end{itemize}
and iterate the last step up to $\mu^0_1$. 
\end{proof}
\begin{corollary}\label{cor:goodreps}
Given two stable MV cycles $Z',Z''$ with Lusztig data $n_\bullet',n_\bullet''$ resp.\ there exists a pair of tableaux $(\tau',\tau'')\in YT(\lambda')_{\mu'}\times YT(\lambda'')_{\mu''}$ for some $\lambda',\lambda''$ dominant effective and some $\mu',\mu''$ effective, such that $\mu = \mu' + \mu''\le \lambda = \lambda' + \lambda''$ are partitions and
\begin{equation}\label{eq:multingeneralized orbital varieties}
    b_{Z'} \arfu b_{Z''} = \sum_{\tau\in YT(\lambda)} i (X(\tau), X(\tau',\tau'')_{0,0})b_{t^{-\lambda}Z(\tau)}
\end{equation}
\end{corollary}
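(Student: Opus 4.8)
The plan is to deduce \eqref{eq:multingeneralized orbital varieties} from the MV-basis multiplication formula \eqref{eq:btimesb}, the identification of intersection numbers in \Cref{cor:intmul}, and the combinatorial normal form of \Cref{lem:almostmintab}. The only genuine work is to choose the tableaux $\tau',\tau''$ and to compare the geometric fusion of the two stable MV cycles $Z',Z''$ with the fusion of their lifts to $\overline{\Gr}^{\lambda'}$ and $\overline{\Gr}^{\lambda''}$.

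\emph{Choosing representatives.} Given $n_\bullet'$ and $n_\bullet''$, use the discussion before \Cref{lem:almostmintab} (a tableau is determined by its Lusztig datum together with a padding lying above the minimal one) to pick $\tau'\in YT(\lambda')_{\mu'}$ and $\tau''\in YT(\lambda'')_{\mu''}$ with $n_\bullet(\tau')=n_\bullet'$ and $n_\bullet(\tau'')=n_\bullet''$. Then $\lambda',\lambda''$ are dominant effective and $\mu',\mu''$ effective automatically, and $\lambda'-\mu'=\sum_{a<b}n'_{(a,b)}\beta_{a,b}\in Q_+$ (and likewise for $\tau''$), so $\mu:=\mu'+\mu''\le\lambda:=\lambda'+\lambda''$. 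The conditions that are not automatic are that $\mu$ be dominant (required in order to invoke \Cref{th:OTGrW} and \Cref{cor:intmul}) and, for the comparison of fusions below, that $\lambda$ be sufficiently dominant. Both can be arranged by enlarging the paddings: adding a box of weight $i$ in row $i$ of $\tau'$ increases $\mu_i$ and $\lambda'_i$ in step while leaving $n_\bullet(\tau')$ fixed, so, working from the bottom row upward, one forces $\mu_1\ge\cdots\ge\mu_m$ with $\lambda'$ (hence $\lambda$) still a partition, and one may overshoot freely.

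\emph{Assembling the formula.} Set $Z'=t^{-\lambda'}Z(\tau')$ and $Z''=t^{-\lambda''}Z(\tau'')$; by \Cref{pr:newmvdes} and the commutativity of the diagram above, these are exactly the stable MV cycles with Lusztig data $n_\bullet'$ and $n_\bullet''$ — i.e.\ the $Z',Z''$ of the statement — and $b_{Z'}=\Psi_{\lambda'}([Z(\tau')])$, $b_{Z''}=\Psi_{\lambda''}([Z(\tau'')])$. Then \eqref{eq:btimesb} gives
\[
    b_{Z'}\arfu b_{Z''}=\sum_{Z\in\cZ(\infty)}i\bigl(Z,\,Z'\geofu Z''\bigr)\,b_Z .
\]
Next I would use that geometric fusion intertwines the stabilization isomorphisms recalled earlier in this subsection: multiplying the first factor by $t^{\lambda'}$ and the second by $t^{\lambda''}$ identifies $Z'\ast_{\AA}Z''$ with $Z(\tau')\ast_{\AA}Z(\tau'')$ as families over $\AA$, and over the special point this becomes multiplication by $t^{\lambda'+\lambda''}=t^{\lambda}$; hence $i(Z,Z'\geofu Z'')=i\bigl(t^{\lambda}Z,\,Z(\tau')\ast_0 Z(\tau'')\bigr)$. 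Since $Z(\tau')\subset\overline{\Gr}^{\lambda'}$ and $Z(\tau'')\subset\overline{\Gr}^{\lambda''}$, Zhu's theorem (recalled after \Cref{def:sphfus}) gives $Z(\tau')\ast_0 Z(\tau'')\subset\overline{\Gr}^{\lambda'}\geofu\overline{\Gr}^{\lambda''}=\overline{\Gr}^{\lambda}$, so the only nonzero terms are those with $Z=t^{-\lambda}Z(\tau)$ for $\tau\in YT(\lambda)$. Substituting this, replacing $i(Z(\tau),Z(\tau')\ast_0 Z(\tau''))$ by $i(X(\tau),X(\tau',\tau'')_{0,0})$ by \Cref{cor:intmul}, and using $b_{t^{-\lambda}Z(\tau)}=\Psi_\lambda([Z(\tau)])$, yields \eqref{eq:multingeneralized orbital varieties}.

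I expect the main obstacle to be the compatibility of geometric fusion with the stabilization isomorphisms used above: making precise that the automorphism of $\Gr\times\Gr$ acting by $t^{\lambda'}$ on the first factor and $t^{\lambda''}$ on the second, transported through the factorization isomorphisms $\theta_s$, extends over the special fibre of $\Grbd$ to multiplication by $t^{\lambda}$, and therefore preserves intersection multiplicities along the zero divisor — so that the fusion computed with the ``big'' cycles $Z(\tau'),Z(\tau'')$ agrees with the one computed with the stable cycles $Z',Z''$. This is the analogue, for products, of the independence of $b_Z$ from the choice of $\lambda$, and is morally present in \cite{baumann2019mirkovic}; note that it is not realized by a gauge transformation of $\Grbd$, so it genuinely uses the geometry of the Beilinson--Drinfeld degeneration. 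Everything else is a diagram chase.
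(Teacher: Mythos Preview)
Your proposal is correct and follows the same route as the paper's proof: pick tableau representatives via \Cref{lem:almostmintab}, pad to make $\mu$ dominant, compare the stable and unstable fusions via translation by $t^{\lambda}$, and invoke \Cref{cor:intmul}. The translation-compatibility you flag as the main obstacle is dispatched in the paper by the single phrase ``isomorphic by translation by $t^\lambda$,'' so no further argument is expected here, and your extra demand that $\lambda$ be sufficiently dominant is not needed since Zhu's theorem already confines the sum to $YT(\lambda)$.
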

\begin{proof}
Choose $\tau' = \sigma(n_\bullet')$, $\tau'' = \sigma(n_\bullet'')$ and suppose $\tau'\in YT(\lambda')_{\mu'}$, $\tau'' \in YT(\lambda'')_{\mu''}$. 
Note that $Z'= t^{-\lambda'}Z(\tau'), Z'' = t^{-\lambda''} Z(\tau'')$ so the intersection multiplicity of a stable MV cycle $Z$ in $Z'\geofu Z''$ can be computed as the intersection multiplicity of an MV cycle $Z(\tau)$ in $Z(\tau')\geofu Z(\tau'')$ for some $\tau\in YT(\lambda)_\mu$, because these two situations isomorphic by translation by $ t^\lambda$. 

In case $\mu$ is not dominant, increase the padding of the larger of the two tableaux by $(\max(0,\mu_2-\mu_1),\max(0,\mu_3 - \mu_2),\dots,\max(0,\mu_m-\mu_{m-1}))$. 

Now, by \Cref{cor:intmul}, the intersection multiplicity of $Z(\tau)$ in $Z(\tau')\geofu Z(\tau'')$ can in turn be computed as the intersection multiplicity of the generalized orbital variety $X(\tau)$ in $X(\tau',\tau'')_{0,0}$.  (We have ensured $ \mu $ is dominant, so that $X(\tau)$ will be defined.) 
\end{proof}
\section{Examples}\label{s:examples}

In this section, we will compute some examples of multiplication of MV basis elements using the method provided by \Cref{cor:goodreps}.

We use the section defined by \Cref{lem:almostmintab} to abbreviate MV basis elements to tableaux and view $\CC[N]$ as an algebra in these, rewriting \Cref{eq:btimesb} as an equation in tableaux.
The coefficients are found using generalized orbital varieties.
Suppose we have two tableaux $\tau'$ and $\tau''$, with respective weights $\lambda'$, $\mu'$ and $\lambda''$, $\mu''$, and we wish to form their (geometric) fusion product. Once we have applied any necessary padding (as in the proof of \Cref{cor:goodreps}) so that $\mu = \mu' + \mu''$ is dominant, we take a generic matrix $A \in X(\tau', \tau'')_{0,s}$.

The requirement that $ A_i := A\big|_{\CC^{|\mu(i)|}} \in \OO^{\lambda'(i), \lambda''(i)}_{0,s}$ for each $i$ imposes rank conditions on $A$ in the form of vanishing and non-vanishing minors. We form the ideal $I\subset \CC[A_{ij}^k, s^\pm] = \CC[\UU_{0, \Ax}^{\mu', \mu''}]$ generated by these relations. The ideal of the fusion $ X(\tau', \tau'')_{0,0}$ is
$$
    J = (I \cap \CC[A_{ij}^k,s]) + (s)\,.
$$ 
Taking the primary decomposition (to account for possible multiplicities) of $J$ gives us ideals $J_1,\dots,J_n$, each corresponding to a generalized orbital variety occurring in the fusion. We use \Cref{pr:generalized orbital varietiesasirrecs} to find a tableau for each $J_i$ and then forget all unnecessary padding to get the corresponding stable MV cycle. 
Each $J_i$ also tells us the multiplicity of each MV cycle in the fusion product, where we have multiplicity 1 if and only if $J_i = \sqrt {J_i}$.

\subsection{Type \texorpdfstring{$A_2$}{A2}}\label{ss:a2}
\begin{example}
$\young(22)\arfu\young(11,33)= \young(33) + \young(22,33) + 2 \, \young(23,3)= \young(2)\,^2 + \young(2,3)\,^2 + 2 \, \young(2,3)\cdot\young(3)\,$:
%
The requirement that 
\[
A = \left[\begin{BMAT}(e){cc;cc;cc}{cc;cc;cc}
    0 & 1 & & & & \\
    -s^2 & 2s & A_{12}^1 & A_{12}^2 & A_{13}^1 & A_{13}^2 \\
     & & 0 & 1 & & \\
     & & & 0 & A_{23}^1 & A_{23}^2 \\
     & & & & 0 & 1 \\
     & & & & -s^2 & 2s
\end{BMAT}
\right]\text{ is contained in } \young(22)\bdfamo \young(11,33)
\]
imposes the following nontrivial relations on $A$. 
{\small
$$
\begin{aligned}
    & 2 A_{12}^2 A_{23}^2 s + 3 A_{13}^2 s^2 + A_{12}^2 A_{23}^1 + A_{12}^1 A_{23}^2 + 2 A_{13}^1 s, A_{13}^2 s^3 - A_{12}^2 A_{23}^1 s - A_{12}^1 A_{23}^2 s - 2 A_{12}^1 A_{23}^1, \\ 
     & A_{12}^2 A_{13}^2 A_{23}^1 s^2 + A_{12}^1 A_{13}^2 A_{23}^2 s^2 - (A_{12}^2 A_{23}^1)^2 + 2 A_{12}^1 A_{12}^2 A_{23}^1 A_{23}^2 - (A_{12}^1 A_{23}^2)^2 + 6 A_{12}^1 A_{13}^2 A_{23}^1 s + 4 A_{12}^1 A_{13}^1 A_{23}^1, \\
     & A_{12}^1 A_{13}^2 (A_{23}^2)^2 s^2 - (A_{12}^2 A_{23}^1)^2 A_{23}^2 + 2 A_{12}^1 A_{12}^2 A_{23}^1 (A_{23}^2)^2 - (A_{12}^1)^2 (A_{23}^2)^3 - 2 A_{12}^2 A_{13}^2 (A_{23}^1)^2 s \\
     & \quad + 4 A_{12}^1 A_{13}^2 A_{23}^1 A_{23}^2 s - A_{13}^1 A_{13}^2 A_{23}^1 s^2 - 3 A_{12}^1 A_{13}^2 (A_{23}^1)^2 + 4 A_{12}^1 A_{13}^1 A_{23}^1 A_{23}^2, \\
     & (A_{12}^2)^3 (A_{23}^1)^2 A_{23}^2 - 2 A_{12}^1 A_{23}^1 (A_{12}^2 A_{23}^2)^2 + A_{12}^2 (A_{12}^1)^2 (A_{23}^2)^3 + 2 A_{13}^2 (A_{12}^2 A_{23}^1)^2 s + 2 A_{13}^2 (A_{12}^1 A_{23}^2)^2 s \\
     & \quad + 3 A_{12}^1 (A_{13}^2)^2 A_{23}^1 s^2 + A_{13}^1 (A_{12}^2 A_{23}^1)^2 + 4 A_{12}^1 A_{12}^2 A_{13}^2 (A_{23}^1)^2 - 6 A_{12}^1 A_{12}^2 A_{13}^1 A_{23}^1 A_{23}^2 \\
     &\quad + 4 (A_{12}^1)^2 A_{13}^2 A_{23}^1 A_{23}^2  - 4 A_{12}^1 A_{13}^1 A_{13}^2 A_{23}^1 s - 4 A_{12}^1 (A_{13}^1)^2 A_{23}^1 + A_{13}^1 (A_{12}^1 A_{23}^2)^2
\end{aligned}
$$}
At $s=0$ the ideal generated by the above relations decomposes as a union of the following generalized orbital varieties, where the last component occurs with multiplicity 2. 
\begin{table}[H]
  \centering
  \newcolumntype{Q}{>{$}l<{$}>{$\small}l<{$}}
  \begin{tabular}{Q} 
    \text{Ideal of }X(\tau) & \tau \\
    \midrule 
    ( A_{12}^1, A_{12}^2,s ) & \young(1133,22)  \BS \\ 
    ( A_{23}^1, A_{23}^2,s ) & \young(1122,33) \BS\TS \\
    \sqrt{( (A_{23}^1)^2, A_{12}^2A_{23}^1 + A_{12}^1A_{23}^2, A_{12}^1A_{23}^1, (A_{12}^1)^2,s )} & \young(1123,23) \TS
    \end{tabular}
\end{table}
\end{example}

\subsection{Type \texorpdfstring{$A_3$}{A3}}\label{ss:GL4 examples}
There is a cluster structure on $\CC[N] \cong \CC[x_{ij} : 1 \leq i < j \leq 4]$ consisting of 12 cluster variables. The initial cluster coming from the standard reduced word is
\[
\begin{array}{ccc}
    x_1 = x_{12} & x_2 = x_{13} & x_3 = x_{12}x_{23}-x_{13} \\
    x_4 = x_{12}x_{23}x_{34} - x_{12}x_{24} - x_{13}x_{34} + x_{14} & x_5 = x_{13}x_{24}-x_{14}x_{23} & x_6 = x_{14} 
\end{array}
\]
We summarize our findings in the table below, listing each cluster variable along with its stable tableau, the coordinate ring of its generalized orbital variety, and the geometry of its MV cycle.  
\newcolumntype{H}{>{\setbox0=\hbox\bgroup$}c<{$\egroup}@{}}
\begin{table}[H]
  \centering
  \newcolumntype{Q}{>{$}l<{$}H>{$\small}l<{$}>{$}l<{$}>{$}l<{$}>{$}l<{$}}
  \begin{tabular}{Q} 
    n_\bullet & \tau = \text{Dom. } \overline \tau & \tau  & \text{Coordinate ring of }X(\tau) & Z &  \text{Cluster variable} \\
    \midrule 
(1,0,0,0,0,0) &\young(12) & \young(2) &  \CC[A_{12}^1] & \PP^1 &  x_1 \rule{0pt}{3ex}\\
(0,0,0,1,0,0) & \young(11,23) & \young(1,3) &  \frac{\CC[A_{12}^1,A_{13}^1,A_{23}^1]}{( A_{12}^1,A_{13}^1 )} & \PP^1 &  \frac{x_2 + x_3}{x_1} \TS\BS\\
(0,0,0,0,0,1) & \young(11,22,34) & \young(1,2,4) &  \frac{\CC[A_{12}^1,A_{12}^2,A_{13}^1,A_{14}^1,A_{23}^1,A_{24}^1,A_{34}^1]}{( A_{12}^1,A_{12}^2,A_{13}^1,A_{14}^1,A_{23}^1,A_{24}^1 )} & \PP^1 &  \frac{x_2x_4 + x_3x_6 + x_1x_5}{x_2x_3} \TS\BS\\
(1,0,0,1,0,0) & \young(13,2) & \young(2,3) & \frac{\CC[A_{12}^1,A_{13}^1,A_{23}^1]}{( A_{23}^1 )}  & \PP^2 &  x_2 \TS\BS\\
(0,1,0,0,0,0) & \young(12,3) & \young(3) &  \frac{\CC[A_{12}^1,A_{13}^1,A_{23}^1]}{( A_{12}^1 )} & \PP^2 &  x_3 \TS\BS\\
(0,0,0,1,0,1) & \young(11,24,3) & \young(1,3,4) & \frac{\CC[A_{12}^1,A_{13}^1,A_{14}^1,A_{23}^1,A_{24}^1,A_{34}^1]}{( A_{12}^1,A_{13}^1,A_{14}^1,A_{34}^1 )}  & \PP^2 &  \frac{x_1x_5 + x_6(x_2+x_3)}{x_1x_2} \TS\BS\\
(0,0,0,0,1,0) & \young(11,23,4) & \young(1,4) &  \frac{\CC[A_{12}^1,A_{13}^1,A_{14}^1,A_{23}^1,A_{24}^1,A_{34}^1]}{( A_{12}^1,A_{13}^1,A_{23}^1,A_{34}^1 )} & \PP^2 &  \frac{x_1x_5 + x_4(x_2+x_3)}{x_1x_3}\TS\BS\\
(0,1,0,0,0,1) & \young(112,24,3) & \young(13,2,4) &  \frac{\CC[A_{12}^1,A_{13}^1,A_{14}^1,A_{23}^1,A_{24}^1,A_{34}^1]}{( A_{12}^1,A_{34}^1,A_{13}^1A_{24}^1-A_{23}^1A_{14}^1 )} & S(2,4) &  \frac{x_3x_6+x_1x_5}{x_2} \TS\BS\\
(1,0,0,0,1,0) & \young(13,2,4) & \young(2,4) &  \frac{\CC[A_{12}^1,A_{12}^2,A_{13}^1,A_{14}^1,A_{23}^1,A_{24}^1,A_{34}^1]}{( A_{12}^1,A_{13}^1,A_{23}^1,A_{24}^1 )} & S(2,4) &  \frac{x_2x_4 + x_1x_5}{x_3} \TS\BS\\
(0,0,1,0,0,0) & \young(12,3,4) & \young(4) &  \frac{\CC[A_{12}^1,A_{13}^1,A_{14}^1,A_{23}^1,A_{24}^1,A_{34}^1]}{( A_{12}^1,A_{13}^1,A_{23}^1 )} & \PP^3 &  x_4 \TS\BS\\
(0,1,0,0,1,0) & \young(13,24) & \young(3,4) &  \frac{\CC[A_{12}^1,A_{13}^1,A_{14}^1,A_{23}^1,A_{24}^1,A_{34}^1]}{( A_{12}^1,A_{34}^1 )} & Gr(2,4) &  x_5 \TS\BS\\
(1,0,0,1,0,1) & \young(14,2,3) & \young(2,3,4) &  \frac{\CC[A_{12}^1,A_{13}^1,A_{14}^1,A_{23}^1,A_{24}^1,A_{34}^1]}{( A_{23}^1,A_{24}^1,A_{34}^1 )} & \PP^3 & x_6 \TS\BS\\
  \end{tabular}
  \caption{The variety $S(2,4)$ denotes the non-smooth Schubert divisor in $Gr(2,4)$ the Grassmannian of planes in $\CC^4$}
\end{table}

In what follows we check that the exchange relations satisfied by the above cluster variables in $\CC[N]$ are corroborated by the fusion of the corresponding MV cycles.
\begin{example}
$\young(2) \cdot \young(1,3) = \young(3) + \young(2,3)$\,: 
The requirement that 
\[
A = \left[\begin{BMAT}(e){c;c;c}{c;c;c}
    s & A_{12}^1 & A_{13}^1 \\
     & 0 & A_{23}^1 \\
     & & s
\end{BMAT}
\right] \text{ is contained in } \young(2) \bdfamo \young(1,3) 
\]
imposes the relation $A_{12}^1A_{23}^1+sA_{13}^1$ on $A$ and at $s = 0$ the ideal generated by this relation decomposes as a union of the following generalized orbital varieties.
\begin{table}[H]
  \centering
  \newcolumntype{Q}{>{$}l<{$}>{$}l<{$}}
  \begin{tabular}{Q} 
    \text{Ideal of } X(\tau) & \tau \\ 
    \midrule 
    (A_{12}^1,s) & \young(13,2) \BS \\
    ( A_{23}^1,s) & \young(12,3) \TS
    \end{tabular}
\end{table}
\end{example}

\begin{example}
$\young(1,3) \cdot \young(1,2,4) = \young(1,4) + \young(1,3,4)\,$: The requirement that
\[
A = \left[\begin{BMAT}(e){cc;c;c;c}{cc;c;c;c}
    0 & 1 & & & \\
     & s & A_{12}^1 & A_{13}^1 & A_{14}^1 \\
     & & s & A_{23}^1 & A_{24}^1 \\
     & & & 0 & A_{34}^1 \\
     & & & & s
\end{BMAT}
\right] \text{ is contained in } \young(1,3) \bdfamo \young(1,2,4)
\]
results in the following relations on submatrices:
\begin{table}[H]
  \centering
  \newcolumntype{Q}{>{$}l<{$}>{$}l<{$}}
  \begin{tabular}{Q} 
    \text{Submatrix} & \text{Relations} \\
    \midrule 
    A_2 & A_{12}^1 \\
    A_3 & A_{13}^1 \\
    A_4 & A_{14}^1, A_{23}^1A_{34}^1 + sA_{24}^1
    \end{tabular}
\end{table}
\noindent At $s = 0$ the ideal generated by the above relations decomposes as a union of the following generalized orbital varieties.
\begin{table}[H]
  \centering
  \newcolumntype{Q}{>{$}l<{$}>{$\small }l<{$}}
  \begin{tabular}{Q} 
    \text{Ideal of } X(\tau) & \tau \\ 
    \midrule 
    (A_{12}^1,A_{13}^1,A_{23}^1,A_{14}^1,s) & \young(11,24,3) \BS \\
    (A_{12}^1,A_{13}^1,A_{14}^1,A_{34}^1,s) & \young(11,23,4) \TS 
    \end{tabular}
\end{table}
\end{example}

\begin{example}
$\young(2) \cdot \young(1,3,4) = \young(2,3,4) + \young(13,2,4)\,$: The requirement that
\[
A = \left[\begin{BMAT}(e){c;c;c;c}{c;c;c;c}
    s & A_{12}^1 & A_{13}^1 & A_{14}^1 \\
     & 0 & A_{23}^1 & A_{24}^1 \\
     & & s & A_{34}^1 \\
     & & & s
\end{BMAT}
\right] \text{ is contained in } \young(2) \bdfamo \young(1,3,4)
\]
results in the following relations on submatrices:
\begin{table}[H]
  \centering
  \newcolumntype{Q}{>{$}l<{$}>{$}l<{$}}
  \begin{tabular}{Q} 
    \text{Submatrix} & \text{Relations} \\
    \midrule 
    A_3 & A_{12}^1A_{23}^1 + sA_{13}^1 \\
    A_4 & A_{34}^1, A_{12}^1A_{24}^1 + sA_{14}^1, A_{23}^1A_{14}^1 - A_{13}^1A_{24}^1
    \end{tabular}
\end{table}
\noindent At $s = 0$ the ideal generated by the above relations decomposes as a union of the following generalized orbital varieties.
\begin{table}[H]
  \centering
  \newcolumntype{Q}{>{$}l<{$}>{$\small}l<{$}}
  \begin{tabular}{Q} 
    \text{Ideal of } X(\tau) & \tau \\ 
    \midrule 
    (A_{23}^1,A_{24}^1,A_{34}^1,s) & \young(12,3,4) \BS \\
    (A_{12}^1,A_{34}^1,A_{23}^1A_{14}^1 - A_{13}^1A_{24}^1,s) & \young(13,2,4) \TS
    \end{tabular}
\end{table}
\end{example}

\begin{example}
$\young(2) \cdot \young(11,24,3) = \young(4) + \young(2,4)\,$: The requirement that
\[
A = \left[\begin{BMAT}(e){cc;cc;c;c}{cc;cc;c;c}
    0 & 1 & & & & \\
    -s^2 & 2s & A_{12}^1 & A_{12}^2 & A_{13}^1 & A_{14}^1 \\
     & & 0 & 1 & & \\
     & & & s & A_{23}^1 & A_{24}^1 \\
     & & & & s & A_{34}^1 \\
     & & & & & s
\end{BMAT}
\right] \text{ is contained in } \young(2) \bdfamo \young(11,24,3)
\]
results in the following relations on submatrices:
\begin{table}[H]
  \centering
  \newcolumntype{Q}{>{$}l<{$}>{$}l<{$}}
  \begin{tabular}{Q} 
    \text{Submatrix} & \text{Relations} \\
    \midrule 
    A_2 & A_{12}^1 + sA_{12}^2 \\
    A_3 & A_{13}^1, A_{23}^1 \\
    A_4 & A_{12}^2A_{24}^1 + sA_{14}^1, A_{12}^1A_{24}^1 - s^2A_{14}^1 
    \end{tabular}
\end{table}
\noindent At $s = 0$ the ideal generated by the above relations decomposes as a union of the following generalized orbital varieties.
\begin{table}[H]
  \centering
  \newcolumntype{Q}{>{$}l<{$}>{$\small}l<{$}}
  \begin{tabular}{Q} 
    \text{Ideal of } X(\tau) & \tau \\ 
    \midrule 
    (A_{12}^1,A_{12}^2,A_{13}^1,A_{23}^1,s) & \young(114,22,3) \BS \\
    (A_{12}^1,A_{13}^1,A_{23}^1,A_{24}^1,s) & \young(112,24,3) \TS
    \end{tabular}
\end{table}
\end{example}

\begin{example}

$\young(11,2,4) \cdot \young(2,3) = \young(2,4) + \young(2,3,4)\,$: The requirement that
\[
A = \left[\begin{BMAT}(e){cc;cc;c;c}{cc;cc;c;c}
    0 & 1 & & & & \\
     & 0 & A_{12}^1 & A_{12}^2 & A_{13}^1 & A_{14}^1 \\
     & & 0 & 1 & & \\
     & & & s & A_{23}^1 & A_{24}^1 \\
     & & & & s & A_{34}^1 \\
     & & & & & 0
\end{BMAT}
\right] \text{ is contained in } \young(11,2,4) \bdfamo \young(2,3)
\]
results in the following relations on submatrices:
\begin{table}[H]
  \centering
  \newcolumntype{Q}{>{$}l<{$}>{$}l<{$}}
  \begin{tabular}{Q} 
    \text{Submatrix} & \text{Relations} \\
    \midrule 
    A_2 & A_{12}^1 \\
    A_3 & A_{23}^1 \\
    A_4 & A_{24}^1, A_{13}^1A_{34}^1 - sA_{14}^1 
    \end{tabular}
\end{table}
\noindent At $s = 0$ the ideal generated by the above relations decomposes as a union of the following generalized orbital varieties.
\begin{table}[H]
  \centering
  \newcolumntype{Q}{>{$}l<{$}>{$\small}l<{$}}
  \begin{tabular}{Q} 
    \text{Ideal of } X(\tau) & \tau \\ 
    \midrule 
    (A_{12}^1,A_{13}^1,A_{23}^1,A_{24}^1,s) & \young(112,24,3) \BS \\
    (A_{12}^1,A_{23}^1,A_{24}^1,A_{34}^1,s) & \young(112,23,4) \TS
    \end{tabular}
\end{table}
\end{example}

\begin{example}
$\young(1,2,4) \cdot \young(3) = \young(4) + \young(13,2,4)\,$: The requirement that
\[
A = \left[\begin{BMAT}(e){c;c;c;c}{c;c;c;c}
    0 & A_{12}^1 & A_{13}^1 & A_{14}^1 \\
     & 0 & A_{23}^1 & A_{24}^1 \\
     & & s & A_{34}^1 \\
     & & & 0
\end{BMAT}
\right] \text{ is contained in } \young(1,2,4) \bdfamo \young(3)
\]
results in the following relations on submatrices:
\begin{table}[H]
  \centering
  \newcolumntype{Q}{>{$}l<{$}>{$}l<{$}}
  \begin{tabular}{Q} 
    \text{Submatrix} & \text{Relations} \\
    \midrule 
    A_2 & A_{12}^1 \\
    A_4 & A_{23}^1A_{34}^1 - sA_{24}^1, A_{13}^1A_{34}^1 - sA_{14}^1, A_{13}^1A_{24}^1 - A_{23}^1A_{14}^1 
    \end{tabular}
\end{table}
\noindent At $s = 0$ the ideal generated by the above relations decomposes as a union of the following generalized orbital varieties.
\begin{table}[H]
  \centering
  \newcolumntype{Q}{>{$}l<{$}>{$\small}l<{$}}
  \begin{tabular}{Q} 
    \text{Ideal of } X(\tau) & \tau \\ 
    \midrule 
    (A_{12}^1,A_{13}^1,A_{23}^1,s) & \young(14,2,3) \BS \\
    (A_{12}^1,A_{34}^1,A_{13}^1A_{24}^1-A_{23}^1A_{14}^1,s) & \young(13,2,4) \TS
    \end{tabular}
\end{table}
\end{example}

\begin{example}

$\young(11,24) \cdot \young(3) = \young(1,3)\cdot \young(4) + \young(3,4)\,$: The requirement that 
\[
A = \left[\begin{BMAT}(e){cc;c;c;c}{cc;c;c;c}
    0 & 1 & & & \\
     & 0 & A_{12}^1 & A_{13}^1 & A_{14}^1 \\
     & & 0 & A_{23}^1 & A_{24}^1 \\
     & & & s & A_{34}^1 \\
     & & & & 0
\end{BMAT}
\right] \text{ is contained in } \young(11,24) \bdfamo \young(3)
\]
results in the following relations on submatrices:
\begin{table}[H]
  \centering
  \newcolumntype{Q}{>{$}l<{$}>{$}l<{$}}
  \begin{tabular}{Q} 
    \text{Submatrix} & \text{Relations} \\
    \midrule 
    A_2 & A_{12}^1 \\
    A_4 & A_{13}^1A_{34}^1 - sA_{14}^1
    \end{tabular}
\end{table}
\noindent At $s = 0$ the ideal generated by the above relations decomposes as a union of the following generalized orbital varieties.
\begin{table}[H]
  \centering
  \newcolumntype{Q}{>{$}l<{$}>{$\small}l<{$}}
  \begin{tabular}{Q} 
    \text{Ideal of } X(\tau) & \tau \\ 
    \midrule 
    (A_{12}^1,A_{13}^1,s) & \young(114,23) \BS \\
    (A_{12}^1,A_{34}^1,s) & \young(113,24) \TS
    \end{tabular}
\end{table}
\end{example}

\begin{example}

$\young(11,23,4) \cdot \young(2,3) = \young(3,4) + \young(2,3,4)\cdot \young(1,3) \,$: The requirement that 
\[
A = \left[\begin{BMAT}(e){cc;cc;cc;c}{cc;cc;cc;c}
    0 & 1 & & & & & \\
     & 0 & A_{12}^1 & A_{12}^2 & A_{13}^1 & A_{13}^2 & A_{14}^1 \\
     & & 0 & 1 & & & \\
     & & & s & A_{23}^1 & A_{23}^2 & A_{24}^1 \\
     & & & & 0 & 1 & \\
     & & & & & s & A_{34}^1 \\
     & & & & & & 0
\end{BMAT}
\right] \text{ is contained in } \young(11,23,4) \bdfamo \young(2,3)
\]
results in the following relations on submatrices:
\begin{table}[H]
  \centering
  \newcolumntype{Q}{>{$}l<{$}>{$}l<{$}}
  \begin{tabular}{Q} 
    \text{Submatrix} & \text{Relations} \\
    \midrule 
    A_2 & A_{12}^1 \\
    A_3 & A_{12}^2A_{23}^1 - sA_{13}^1, A_{23}^1 + sA_{23}^2, A_{12}^2A_{23}^2+A_{13}^1 \\
    A_4 & A_{34}^1, A_{12}^2A_{24}^1 - sA_{14}^1, A_{14}^1A_{23}^1 - A_{13}^1A_{24}^1
    \end{tabular}
\end{table}
\noindent At $s = 0$ the ideal generated by the above relations decomposes as a union of the following generalized orbital varieties.
\begin{table}[H]
  \centering
  \newcolumntype{Q}{>{$}l<{$}>{$\small}l<{$}}
  \begin{tabular}{Q} 
    \text{Ideal of } X(\tau) & \tau \\ 
    \midrule 
    (A_{12}^1,A_{12}^2,A_{13}^1,A_{23}^1,A_{34}^1,s) & \young(113,224,3) \BS \\
    (A_{12}^1,A_{23}^1,A_{24}^1,A_{34}^1,A_{12}^2A_{23}^2+A_{13}^1,s) & \young(112,233,4) \TS
    \end{tabular}
\end{table}
\end{example}

\begin{example}
$\young(113,22,4) \cdot \young(1,3) = \young(3,4) + \young(1,3,4)\arfu \young(3)\,$: The requirement that
\[
A = \left[\begin{BMAT}(e){ccc;cc;cc;c}{ccc;cc;cc;c}
    0 & 1 & & & & & & \\
     & 0 & 1 & & & & & \\
     & & s & A_{12}^1 & A_{12}^2 & A_{13}^1 & A_{13}^2 & A_{14}^1 \\
     & & & 0 & 1 & & & \\
     & & & & 0 & A_{23}^1 & A_{23}^2 & A_{24}^1 \\
     & & & & & 0 & 1 & \\
     & & & & & & s & A_{34}^1 \\
     & & & & & & & 0
\end{BMAT}
\right] \text{ is contained in } \young(113,22,4) \bdfamo \young(1,3)
\]
results in the following relations on submatrices:
\begin{table}[H]
  \centering
  \newcolumntype{Q}{>{$}l<{$}>{$}l<{$}}
  \begin{tabular}{Q} 
    \text{Submatrix} & \text{Relations} \\
    \midrule 
    A_2 & A_{12}^1, A_{12}^2 \\
    A_3 & A_{13}^1 + sA_{13}^2 \\
    A_4 & A_{34}^1, A_{14}^1 A_{23}^1 - A_{13}^1 A_{24}^1
    \end{tabular}
\end{table}
\noindent At $s = 0$ the ideal generated by the above relations decomposes as a union of the following generalized orbital varieties.
\begin{table}[H]
  \centering
  \newcolumntype{Q}{>{$}l<{$}>{$\small}l<{$}}
  \begin{tabular}{Q} 
    \text{Ideal of } X(\tau) & \tau \\ 
    \midrule 
    (A_{12}^1,A_{12}^2,A_{13}^1,A_{23}^1,A_{34}^1,s) & \young(1113,224,3) \BS \\
    (A_{12}^1,A_{12}^2,A_{13}^1,A_{14}^1,A_{34}^1,s) & \young(1113,223,4) \TS
    \end{tabular}
\end{table}
\end{example}

\begin{example}
$\young(1,3) \cdot \young(2,4) = \young(3,4) + \young(1,4)\arfu\young(2,3)\,$: The requirement that 
\[
A = \left[\begin{BMAT}(e){c;c;c;c}{c;c;c;c}
    0 & A_{12}^1 & A_{13}^1 & A_{14}^1 \\
     & s & A_{23}^1 & A_{24}^1 \\
     & & 0 & A_{34}^1 \\
     & & & s
\end{BMAT}
\right] \text{ is contained in } \young(1,3) \bdfamo \young(2,4)
\]
results in the following relations on submatrices:
\begin{table}[H]
  \centering
  \newcolumntype{Q}{>{$}l<{$}>{$}l<{$}}
  \begin{tabular}{Q} 
    \text{Submatrix} & \text{Relations} \\
    \midrule 
    A_3 & A_{12}^1A_{23}^1 - sA_{13}^1 \\
    A_4 & A_{23}^1A_{34}^1 + sA_{24}^1, A_{12}^1A_{24}^1 + A_{13}^1A_{34}^1
    \end{tabular}
\end{table}
\noindent At $s = 0$ the ideal generated by the above relations decomposes as a union of the following generalized orbital varieties.
\begin{table}[H]
  \centering
  \newcolumntype{Q}{>{$}l<{$}>{$\small}l<{$}}
  \begin{tabular}{Q} 
    \text{Ideal of } X(\tau) & \tau \\ 
    \midrule 
    (A_{12}^1,A_{34}^1,s) & \young(13,24) \BS \\
    (A_{23}^1,A_{12}^1A_{24}^1 + A_{13}^1A_{34}^1,s) & \young(12,34) \TS
    \end{tabular}
\end{table}
\end{example}

\begin{example}

$\young(113,2,4) \cdot \young(2,3) = \young(3,4)\arfu \young(2) + \young(2,3,4)\arfu\young(3)\,$: The requirement that 
\[
A = \left[\begin{BMAT}(e){cc;cc;cc;c}{cc;cc;cc;c}
    0 & 1 & & & & & \\
     & 0 & A_{12}^1 & A_{12}^2 & A_{13}^1 & A_{13}^2 & A_{14}^1 \\
     & & 0 & 1 & & & \\
     & & & s & A_{23}^1 & A_{23}^2 & A_{24}^1 \\
     & & & & 0 & 1 & \\
     & & & & & s & A_{34}^1 \\
     & & & & & & 0
\end{BMAT}
\right] \text{ is contained in } \young(113,2,4) \bdfamo \young(2,3)
\]
results in the following relations on submatrices:
\begin{table}[H]
  \centering
  \newcolumntype{Q}{>{$}l<{$}>{$}l<{$}}
  \begin{tabular}{Q} 
    \text{Submatrix} & \text{Relations} \\
    \midrule 
    A_2 & A_{12}^1 \\
    A_3 & A_{23}^1 +sA_{23}^2 \\
    A_4 & A_{34}^1, A_{13}^1A_{24}^1 - A_{23}^1A_{14}^1 
    \end{tabular}
\end{table}
\noindent At $s = 0$ the ideal generated by the above relations decomposes as a union of the following generalized orbital varieties.
\begin{table}[H]
  \centering
  \newcolumntype{Q}{>{$}l<{$}>{$\small}l<{$}}
  \begin{tabular}{Q} 
    \text{Ideal of } X(\tau) & \tau \\ 
    \midrule 
    (A_{12}^1,A_{13}^1,A_{23}^1,A_{34}^1,s) & \young(1123,24,3) \BS \\
    (A_{12}^1,A_{23}^1,A_{24}^1,A_{34}^1,s) & \young(1123,23,4) \TS
    \end{tabular}
\end{table}
\end{example}

\begin{example}
$\young(2,4) \cdot \young(13) = \young(2,3)\arfu \young(4) + \young(3,4)\arfu \young(2):$ The requirement that
\[
A = \left[\begin{BMAT}(e){c;c;c;c}{c;c;c;c}
     s & A_{12}^1 & A_{13}^1 & A_{14}^1 \\
     & 0 & A_{23}^1 & A_{24}^1 \\
     & & s & A_{34}^1 \\
     & & & 0
\end{BMAT}
\right] \text{ is contained in } \young(2,4) \bdfamo \young(13)
\]
results in the following relations on submatrices:
\begin{table}[H]
  \centering
  \newcolumntype{Q}{>{$}l<{$}>{$}l<{$}}
  \begin{tabular}{Q} 
    \text{Submatrix} & \text{Relations} \\
    \midrule 
     A_4 & A_{23}^1A_{34}^1 - sA_{24}^1
    \end{tabular}
\end{table}
\noindent At $s = 0$ the ideal generated by the above relations decomposes as a union of the following generalized orbital varieties.
\begin{table}[H]
  \centering
  \newcolumntype{Q}{>{$}l<{$}>{$\small}l<{$}}
  \begin{tabular}{Q} 
    \text{Ideal of } X(\tau) & \tau \\ 
    \midrule 
    (A_{23}^1,s) & \young(124,3) \BS \\
    (A_{34}^1,s) & \young(123,4) \TS
    \end{tabular}
\end{table}
\end{example}

\begin{example}

$\young(113,22,34) \cdot \young(1,4) = \young(1,3,4)\arfu\young(4) + \young(1,2,4)\arfu\young(3,4) \,$: The requirement that 
\[
A = \left[\begin{BMAT}(e){ccc;cc;cc;cc}{ccc;cc;cc;cc}
    0 & 1 & & & & & & & \\
     & 0 & 1 & & & & & & \\
     & & s & A_{12}^1 & A_{12}^2 & A_{13}^1 & A_{13}^2 & A_{14}^1 & A_{14}^2 \\
     & & & 0 & 1 & & & & \\
     & & & & 0 & A_{23}^1 & A_{23}^2 & A_{24}^1 & A_{24}^2 \\
     & & & & & 0 & 1 & & \\
     & & & & & & 0 & A_{34}^1 & A_{34}^2 \\
     & & & & & & & 0 & 1 \\
     & & & & & & & & s
\end{BMAT}
\right] \text{ is contained in } \young(113,22,34) \bdfamo \young(1,4)
\]
results in the following relations on submatrices:
\begin{table}[H]
  \centering
  \newcolumntype{Q}{>{$}l<{$}>{$}l<{$}}
  \begin{tabular}{Q} 
    \text{Submatrix} & \text{Relations} \\
    \midrule 
    A_2 & A_{12}^1,A_{12}^2 \\
    A_3 & A_{13}^1, A_{23}^1 \\
    A_4 & A_{34}^1, A_{13}^2 A_{34}^2 + A_{14}^1 + sA_{14}^2, A_{14}^1 A_{23}^2 - A_{13}^2 A_{24}^1
    \end{tabular}
\end{table}
\noindent At $s = 0$ the ideal generated by the above relations decomposes as a union of the following generalized orbital varieties.
\begin{table}[H]
  \centering
  \newcolumntype{Q}{>{$}l<{$}>{$\small}l<{$}}
  \begin{tabular}{Q} 
    \text{Ideal of } X(\tau) & \tau \\ 
    \midrule 
    (A_{12}^1,A_{12}^2,A_{13}^1,A_{13}^2,A_{23}^1,A_{14}^1,A_{34}^1,s) & \young(1114,223,34) \BS \\
     (A_{12}^1,A_{12}^2,A_{13}^1,A_{23}^1,A_{34}^1,A_{23}^2A_{34}^2 + A_{24}^1,   A_{13}^2A_{34}^2+A_{14}^1,A_{13}^2A_{24}^1-A_{23}^2A_{14}^1,s)  & \young(1113,224,34) \TS
    \end{tabular}
\end{table}
\end{example}

\begin{example}

$\young(11,23,34) \cdot \young(2,4) = \young(1,2,4)\arfu\young(3,4) + \young(2,3,4) \arfu \young(1,4) \,$: The requirement that 
\[
A = \left[\begin{BMAT}(e){cc;cc;cc;cc}{cc;cc;cc;cc}
    0 & 1 & & & & & & \\
     & 0 & A_{12}^1 & A_{12}^2 & A_{13}^1 & A_{13}^2 & A_{14}^1 & A_{14}^2 \\
     & & 0 & 1 & & & & \\
     & & & s & A_{23}^1 & A_{23}^2 & A_{24}^1 & A_{24}^2 \\
     & & & & 0 & 1 & & \\
     & & & & & 0 & A_{34}^1 & A_{34}^2 \\
     & & & & & & 0 & 1 \\
     & & & & & & & s
\end{BMAT}
\right] \text{ is contained in } \young(11,23,34) \bdfamo \young(2,4)
\]
results in the following relations on submatrices:
\begin{table}[H]
  \centering
  \newcolumntype{Q}{>{$}l<{$}>{$}l<{$}}
  \begin{tabular}{Q} 
    \text{Submatrix} & \text{Relations} \\
    \midrule 
   A_2 & A_{12}^1 \\
    A_3 & A_{13}^1, A_{23}^1, A_{12}^2A_{23}^2 + sA_{13}^2 \\
    A_4 & A_{34}^1, A_{23}^2 A_{34}^2 + A_{24}^1 + sA_{24}^2, A_{12}^2 A_{24}^2 + A_{13}^2 A_{34}^2 + A_{14}^1, A_{12}^2 A_{24}^1 - sA_{14}^1, \\
    & A_{14}^1 A_{23}^2 - A_{13}^2 A_{24}^1, A_{13}^2 A_{24}^1 A_{34}^2 + A_{14}^1 A_{24}^1 + sA_{14}^1 A_{24}^2
    \end{tabular}
\end{table}
\noindent At $s = 0$ the ideal generated by the above relations decomposes as a union of the following generalized orbital varieties.
\begin{table}[H]
  \centering
  \newcolumntype{Q}{>{$}l<{$}>{$\small}l<{$}}
  \begin{tabular}{Q} 
    \text{Ideal of } X(\tau) & \tau \\ 
    \midrule 
    \begin{array}{c}
     (A_{12}^1, A_{12}^2, A_{13}^1, A_{23}^1, A_{34}^1, A_{23}^2A_{34}^2 + A_{24}^1,  \\
     A_{13}^2A_{34}^2 + A_{14}^1, A_{13}^2A_{24}^1 - A_{23}^2A_{14}^1,s)  
\end{array} & \young(113,224,34) \BS \\
    \begin{array}{c}
      (A_{12}^1, A_{13}^1, A_{23}^1, A_{23}^2, A_{24}^1,  \\
      A_{34}^1, A_{12}^2A_{24}^2 + A_{13}^2A_{34}^2 + A_{14}^1,s) 
 \end{array} & \young(112,234,34) \TS
    \end{tabular}
\end{table}
\end{example}

\begin{example}

$\young(1113,22,34) \cdot \young(2,4) = \young(2,3,4)\arfu\young(4) + \young(1,2,4)\arfu\young(3,4)\arfu\young(2) \,$: The requirement that 
\[
A = \left[\begin{BMAT}(e){ccc;ccc;cc;cc}{ccc;ccc;cc;cc}
    0 & 1 & & & & & & & & \\
     & 0 & 1 & & & & & & & \\
     & & 0 & A_{12}^1 & A_{12}^2 & A_{12}^3 & A_{13}^1 & A_{13}^2 & A_{14}^1 & A_{14}^2 \\
     & & & 0 & 1 & & & & & \\
     & & & & 0 & 1 & & & & \\
     & & & & & s & A_{23}^1 & A_{23}^2 & A_{24}^1 & A_{24}^2 \\
     & & & & & & 0 & 1 & & \\
     & & & & & & & 0 & A_{34}^1 & A_{34}^2 \\
     & & & & & & & & 0 & 1 \\
     & & & & & & & & & s
\end{BMAT}
\right] \text{ is contained in } \young(1113,22,34) \bdfamo \young(2,4)
\]
results in the following relations on submatrices:
\begin{table}[H]
  \centering
  \newcolumntype{Q}{>{$}l<{$}>{$}l<{$}}
  \begin{tabular}{Q} 
    \text{Submatrix} & \text{Relations} \\
    \midrule 
   A_2 & A_{12}^1, A_{12}^2 \\
    A_3 & A_{13}^1,A_{23}^1 \\
    A_4 & A_{34}^1, A_{23}^2 A_{34}^2 + A_{24}^1 + sA_{24}^2, A_{14}^1 A_{23}^2 - A_{13}^2 A_{24}^1, A_{13}^2 A_{24}^1 A_{34}^2 + A_{14}^1 A_{24}^1 + sA_{14}^1 A_{24}^2
    \end{tabular}
\end{table}
\noindent At $s = 0$ the ideal generated by the above relations decomposes as a union of the following generalized orbital varieties.
\begin{table}[H]
  \centering
  \newcolumntype{Q}{>{$}l<{$}>{$\small}l<{$}}
  \begin{tabular}{Q} 
    \text{Ideal of } X(\tau) & \tau \\ 
    \midrule 
    (A_{12}^1, A_{12}^2, A_{13}^1, A_{23}^1, A_{23}^2, A_{24}^1, A_{34}^1,s) & \young(11124,223,34) \BS \\
     (A_{12}^1, A_{12}^2, A_{13}^1, A_{23}^1, A_{34}^1, A_{23}^2A_{34}^2 + A_{24}^1, A_{13}^2A_{34}^2 + A_{14}^1, A_{13}^2A_{24}^1 - A_{23}^2A_{14}^1, s)& \young(11123,224,34) \TS
    \end{tabular}
\end{table}
\end{example}

We note that the tableaux and mutation equations we have obtained match those predicted by \cite{li2020dual} for corresponding dual canonical basis elements.

\bibliographystyle{alpha}
\bibliography{mvybd}

\end{document}